\date{}
\def\ori{\psi}
\def\proj{\qopname\relax o{pr}}
\def\id{\qopname\relax o{id}}
\def\sing{\qopname\relax o{sing}}
\def\phigeom{\Phi_{\mathrm{g}}}
\def\core{\qopname\relax o{core}}
\def\exc{\qopname\relax o{ex}}
\theoremstyle{theorem}
\newtheorem{theorem}{Theorem}
\newtheorem{lemma}{Lemma}
\theoremstyle{remark}
\newtheorem{remark}{Remark}
\theoremstyle{definition}
\newtheorem{definition}{Definition}
\newtheorem{proposition}{Proposition}
\author {Ivan Dynnikov and Alexandra Skripchenko}
\address{\noindent V.A. Steklov Mathematical Institute of Russian Academy of Science, 8 Gubkina Str., Moscow 119991, Russia}
\email{dynnikov@mech.math.msu.su}
\address{CNRS UMR 7586, Institut de Math\'ematiques de Jussieu---Paris Rive Gauche, Batiment Sophie Germaine, Case 7021, 75205 Paris Cedex 13, France, and}
\address{Laboratory of Geometric Methods in Mathematical Physics, Lomonosov Moscow State University, Moscow 119991, Russia}
\email{sashaskrip@gmail.com}
\title[{On typical leaves of a  measured foliated 2-complex of thin type}]{On typical leaves of a  measured foliated 2-complex\\ of thin type}
\dedicatory{On the occasion of S.P.Novikov's 75th birthdate}
\begin{document}
\maketitle

\begin{abstract}
It is known that all but finitely many leaves of a measured foliated
2-complex of thin type are quasi-isometric to an infinite tree with at most two topological
ends. We show that if the foliation is cooriented, and the
associated $\mathbb R$-tree is self-similar, then a typical
leaf has exactly one topological end. We also construct the first example of a foliated 2-complex
of thin type whose typical leaf has exactly two topological ends. `Typical' means that
the property holds with probability one in a natural sense.
\end{abstract}

\section{Introduction}
This work grew out from an attempt to understand the behavior of plane sections of a 3-periodic surface in $\mathbb R^3$
in the so-called chaotic case \cite{dyn08}. The general problem on the structure of such sections was
brought to mathematics by S.P.Novikov from conductivity theory of normal metals~\cite{nov82}, where the asymptotic
behavior of unbounded connected components of the sections plays an important role,
with the surface being the Fermi surface of the metal and the plane direction being determined
by the external magnetic field.

\begin{figure}[hr]
\centerline{\includegraphics[scale=0.6]{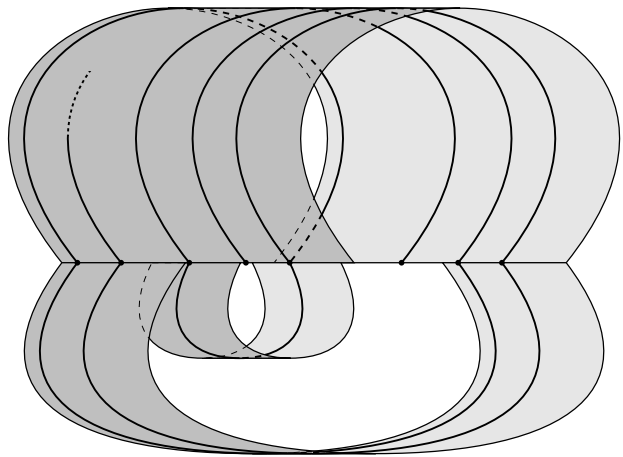}}
\caption{A foliated 2-complex made of three bands}\label{orbit}
\end{figure}

The discussed plane sections of 3-periodic surfaces can also be regarded as level sets
of a smooth quasi-periodic function $f:\mathbb R^2\rightarrow\mathbb R$ with three quasi periods. The
following is an example of such a function:
$$f(x,y)=\cos(x+a)+\cos(y)+\cos(\alpha x+\beta y),$$
where $\alpha,\beta\notin\mathbb Q$ and $a$ are some constants. Two types of the level line behavior, called later trivial and integrable,
have been well understood \cite{zor84,dyn93} and it was shown in~\cite{dyn93} that one of the
two cases occurs with probability one. In particular, it turned out that at most one level set of
a quasi-periodic function with 3 quasi-periods may be neither trivial nor integrable.
As discovered in \cite{dyn97} an exceptional level \emph{can} be of neither type,
in which case it is called chaotic. For example, only the zero level set of the function $f(x,y)$
above can be chaotic (then for any $a$) and this does occur
if $\alpha$ and $\beta$ are chosen appropriately.

In this problem, we deal with a measured oriented foliation on a closed surface embedded in the $3$-torus~$\mathbb T^3$,
so, one may be tempted to apply the theory of interval exchange maps~\cite{K75}.
However, this is unlikely to help here.
The reason is that closed $1$-forms that can appear as the restriction of a constant $1$-form
in~$\mathbb T^3$ to a level surface $M$ of a function satisfy certain restrictions. In particular,
the map $H_1(M,\mathbb Z)\rightarrow\mathbb R$ defined by integrating the $1$-form
along $1$-cycles has a large kernel. Structural theorems about
interval exchange maps usually assert something about almost all but not all
such maps. For example, unique ergodicity, which holds with probability one
for general irreducible interval exchange maps as shown by H.Masur~\cite{mas82} and W.Veech~\cite{v82},
becomes a rare guest here and may be observed \emph{only} in the chaotic case.

The main instrument that allows one to find and study chaotic examples in Novikov's problem
can be viewed as a particular case of an object that has been used
in the theory of dynamical systems, foliations, and geometric group theory
starting from early 1990's, a two-dimensional CW-complex equipped with a measured
foliation having finitely many singularities. A fundamental contribution to the theory
of such foliated complexes was made by E.Rips in his unpublished work
based on some ideas of G.Makanin and A.Razborov, see~\cite{bf95,glp94}.

A basic example of a foliated 2-complex is constructed as follows. Take an oriented
closed interval $D\subset\mathbb R$ and three disjoint \emph{bands}
$B_i=[a_i,b_i]\times[0,1]\subset\mathbb R^2$, $i=1,2,3$, foliated by vertical arcs
$\{x\}\times[0,1]$, and glue each of the horizontal sides $[a_i,b_i]\times\{0\}$, $[a_i,b_i]\times\{1\}$ of every band
to a sub-arc of~$D$ (which is foliated by points)
by an orientation preserving isometry, see Fig.~\ref{orbit}. After the glueing each leaf of the foliation will be
a $1$-dimensional simplicial complex composed of vertical arcs from the bands.

The obtained foliation depends on finitely many parameters, namely, the widths $(b_i-a_i)$ of the bands,
the length of $D$, and the gluing maps. By applying the general
theory, one can show that one of the following
simple cases occurs with probability one:
\begin{enumerate}
\item all leaves are compact;
\item every leaf is quasi-isometric to the plane $\mathbb R^2$, and leaves are assembled together in a way
equivalent (in a natural sense) to a foliation of $\mathbb T^3$ by parallel planes;
\item every non-compact leaf is quasi-isometric to the line $\mathbb R$, and non-compact leaves are assembled together in a way
equivalent to an irrational winding of $\mathbb T^2$.
\end{enumerate}
In the problem on plane sections of a 3-periodic surface, cases (1) and (2) correspond to the trivial behavior, and case (3) to the integrable one.

However, a completely different structure of leaves may occur under a special
choice of parameters. Namely, it is possible that every leaf of the foliation
is everywhere dense and is quasi-isometric to an infinite tree that has arbitrarily large branches.
(In the particular case of three bands one can actually omit `quasi-isometric to'.)
As follows from \cite{bf95} this property of leaves is a characterization of foliations of thin type,
though the definition is quite different. The first example of a foliated $2$-complex
of thin type was constructed by G.Levitt~\cite{l93a}.

Each foliation of thin type in the example above gives rise to a 3-periodic surface
whose plane sections of a fixed direction are chaotic, see~\cite{dyn08}.

The construction above is an example of a so called union of bands, which, in turn,
is a particular case of a band complex~\cite{bf95}. Band complexes provide a convenient
framework to study general measured foliated $2$-complexes, but this is not
the only possible model.
In~\cite{dyn97} $2$-complexes of different kind were used (they had a single $2$-cell), and the relation
to general theory was not noticed. However, the procedure
described there is essentially equivalent to a particular case of the Rips
machine after an appropriate translation.

The theory of band complexes is equivalent to that of systems of partial isometries of the line,
which were introduced in~\cite{glp94}.

One of the first things
we would like to know about chaotic sections is the number of connected
components (all of which are unbounded by construction). This problem is
reduced to the following question about the corresponding foliated 2-complex:
how many topological ends does a typical leaf have? A single topological
end would imply a single connected component of a typical section, and two topological ends
would imply infinitely many components.

It was noted by M.Bestvina and M.Feighn in \cite{bf95} and D.Gaboriau in~\cite{g96}
that all but finitely many leaves of a band complex
of thin type are quasi-isometric to infinite trees with at most two topological
ends, and shown that 1-ended and 2-ended leaves are always present
and, moreover, there are uncountably many leaves of both kinds.
In principle, this leaves three possibilities:
\begin{description}
\item[1-ended leaves win]
the union of 2-ended leaves has zero measure;
\item[2-ended leaves win]
the union of 1-ended leaves has zero measure;
\item[draw]
none of the above, in which case the foliation is not ``uniquely ergodic''.
\end{description}

A few examples have been examined in this respect \cite{bf95,s1,c10}, $1$-ended leaves won
in all of them. As shown in \cite{bf95,g96} the union of $1$-ended leaves
is a $G_\delta$ subset of the band complex, so, it was considered
possible that $1$-ended leaves would win in general.
In the present paper we show that this is not the case,
by constructing explicitly a foliated $2$-complex
of thin type in which $2$-ended leaves win. The complex can still be made of three
bands as shown in Fig.~\ref{orbit}, so it has respective implications for plane
sections of some $3$-periodic surfaces.

\begin{remark}
However, our example is not yet quite satisfactory for physical applications
mentioned in the very beginning of the paper.
All Fermi surfaces that may arise in physics obey a central symmetry, which translates
into a symmetry of the corresponding foliated 2-complex. So, the foliation
must be invariant under an involution that
flips the orientation of the interval~$D$. Constructing a symmetric
example in which 2-ended leaves win does not seem impossible but
looks harder and has not yet been tried.
\end{remark}

We show also that the reason why 1-ended leaves won in the previously tested
cases is the self-similarity of the complexes (to be defined below).
The point is that constructing a
foliated 2-complex of thin type is rather tricky, and self-similarity
provides the easiest way to proof the thinness.

We prove below in quite
general settings that self-similarity necessarily implies that 1-ended
leaves win.

Thierry Coulbois drew our attention that this implication can be deduced from his
result~\cite{c10} and
a result of M.Handel and L.Mosher~\cite{hm07} on parageometric automorphism
of free groups (provided we show that self-similarity in our sense implies
full irreducibility of the corresponding automorphism, which is likely to be true). He also pointed out to us that the limit set of
the repelling tree \cite{c10} is directly related
to the set of 2-ended leaves, and suggested that we can not only
say that the union of 2-ended leaves has zero measure but
also estimate the Hausdorff dimension of the set of points
that separate the leaves passing through them into two infinite parts.
We provide below a self-contained geometrical argument for the win
of $1$-ended leaves in the self-similar case.

The ``draw'' case is yet to be discovered. It seems unlikely that three bands would suffice for that,
but, for a larger number of bands, the non-uniqueness of the invariant transversal
measure, which is a necessary condition for a draw, has already been observed due to R.Martin \cite{mar97}.

The paper is organized as follows. Section~\ref{section-prelim} consists mostly of definitions.
In Section~\ref{section-self-similar} we show that self-similarity implies that
almost all leaves are $1$-ended. In Section~\ref{section-example} we construct an example
of a measured foliated $2$-complex in which almost all leaves are $2$-ended.

\subsection*{Acknowledgements} We thank Thierry Coulbois for fruitful discussions. The first
author is supported in part by the Russian Foundation for Fundamental Research
(grant no.~14-01-00012). The second author gratefully acknowledge the support of
ERC Starting Grant ``Quasiperiodic''.

\section{Preliminaries}\label{section-prelim}
\subsection{Foliated $2$-complexes}
Locally finite $2$-dimensional
CW-complexes homeomorphic to a simplicial complex will be called \emph{$2$-complexes} for short.

Foliations that we consider will be not only transversely measured but also transversely oriented,
so, we will speak in terms of closed $1$-forms.

\emph{A closed $1$-form} $\omega$ on a $2$-complex $X$ is a family of closed $1$-forms on
the closures of
all cells of~$X$ provided that these $1$-forms agree on intersections. We additionally require that
every $2$-cell of $X$ admits a parametrization $f:P\rightarrow X$
with a convex polygon $P\subset\mathbb R^2$ such that
$f^*(\omega)$ coincides with $dx|_P$, where $x$ is the first coordinate in $\mathbb R^2$.

A closed $1$-form $\omega$ on $X$ defines \emph{a foliation with singularities},
denoted $\mathcal F_\omega$ whose \emph{leaves} are maximal path-connected
subsets $L\subset X$ such that
for any arc $\alpha\subset L$ the restriction $\omega|_\alpha$ is identically zero.
The foliation $\mathcal F_\omega$ comes with \emph{transverse measure},
which is a functional on paths in $X$ defined by integrating~$|\omega|$.

A point $p\in X$ is \emph{a regular point of $\mathcal F_\omega$} if the restriction of $\mathcal F_\omega$
to some open neighborhood of $p$ is a trivial fiber bundle over an open interval, and \emph{a singular point}
otherwise. Clearly, singular points may occur only at vertices of $X$ and $1$-cells the restriction of
$\omega$ to which vanishes.
The closure of such a $1$-cell will be referred to as \emph{a vertical edge of $X$} if
there is a single $2$-cell attached to it.
A connected component of the set of singular points of $F_\omega$ will be called
\emph{a singularity} of $F_\omega$.

A leave of $\mathcal F_\omega$ is \emph{singular} if it contains a singular point, and \emph{regular}
otherwise.
For a singular leaf $L$,
we denote by $\sing(L)$ the set of singular points of $L$.

A parametrized path $\gamma:I\rightarrow X$, where $I$ is an interval, is \emph{transverse} to $\mathcal F_\omega$
if $\gamma^*\omega=dt$, where $t$ is a monotonic smooth function on $I$. If, additionally, $\gamma$ is injective,
then its image $\sigma$ is called \emph{a transversal arc} and the the value $|\int_I\gamma^*\omega|$,
which is clearly defined solely by $\sigma$, is said to be \emph{the weight} of $\sigma$. It is denoted by $|\sigma|$.

We implicitly assume that every $2$-complex $X$ that we consider comes with
a metric (that agrees with the topology of $X$), the particular choice of which will not be important.
Every leaf $L$ of $\mathcal F_\omega$ is then also
endowed with a metric $d_L$, which is defined as follows.
The distance $d_L(p,q)$ between any two points $p,q\in L$
is defined as the length of the shortest path $\gamma\subset L$ connecting $p$ and $q$.

\subsection{Union of bands}
\emph{A band} is a (possibly degenerate) rectangular $B=[a,b]\times[0,1]\subset\mathbb R^2$, $a\leqslant b$, endowed with the $1$-form $dx$,
where $x$ is the first coordinate in $\mathbb R^2$.
The horizontal sides $[a,b]\times\{0\}$ and $[a,b]\times\{1\}$ are called \emph{the bases} of the band, any
arc of the form $\{c\}\times[0,1]$ with $c\in[a,b]$ is called \emph{a vertical arc}. The band is \emph{degenerate} if
$a=b$. The value $(b-a)$ is called \emph{the width} of the band and denoted $|B|$.

\emph{A union of bands} is a $2$-complex $X$ endowed with a closed $1$-form $\omega$ obtained from a union~$D$
of pairwise disjoint closed (possibly degenerate to a point) intervals
of $\mathbb R$, called \emph{the support multi-interval of~$X$},
and several pairwise disjoint bands $B_i=[a_i,b_i]\times[0,1]$ by gluing each base
of every band isometrically and preserving the orientation to a closed subinterval of $D$. The form $\omega$ is
the one whose restriction to each band is $dx$, so, we keep using notation $dx$ for it.

\begin{remark}
Our definition of a union of bands is less general than the one in~\cite{bf95}, where $D$ is allowed to
be an arbitrary $1$-dimensional simplicial complex and preserving orientation is not
required for the gluing maps. If we omit the preserving orientation requirement
the class of unions of bands that we consider
will be precisely that of suspensions of system of partial isometries of the line as defined in~\cite{glp94}.
\end{remark}

A union of band is \emph{non-degenerate} if it does not contain a degenerate band or an isolated point.
\emph{The solid part} of a union of bands $X$ is the union of bands obtained from $X$
by removing all degenerate bands and isolated points of $D$.

A vertical side of any non-degenerate band in $X$ is called \emph{a vertical edge of $X$}.
The vertical edge $\{a\}\times[0,1]$ of a band $[a,b]\times[0,1]$
if \emph{a left edge of $X$}, and $\{b\}\times[0,1]$ \emph{a right edge of $X$}.

For a union of bands $X$, we denote by $|X|$ the sum of the widths of all the bands.
For a multi-interval $D\subset\mathbb R$ we denote by $|D|$ the sum
of the lengths of all intervals in $D$. The difference $|X|-|D|$ will be called
\emph{the excess} of $X$ and denoted $\exc(X)$.

Let $Y_1$ and $Y_2$ be unions of bands with support multi-intervals $D_1$ and $D_2$, respectively.
We say that they are \emph{isomorphic}
if there is a homeomorphism $f:Y_1\rightarrow Y_2$ such that we have $f^*(dx)=dx$.
If, additionally, $Y_1$ has minimal possible number of bands among unions of bands
isomorphic to $Y_2$ and we have $f(D_1)\subset D_2$, then the image $f(B)$ of any band $B$ of $Y_1$ is called
\emph{a long band of $Y_2$}.

It is easy to see that for isomorphic unions of bands $Y_1$, $Y_2$ we have $\exc(Y_1)=\exc(Y_2)$.

\emph{An enhanced union of bands} is a union of bands $Y$ together with a non-trivial assignment
of a non-negative real number to each band. This number is called \emph{the length of the band}.
A band of width $w$ and length $\ell$ is said to have \emph{dimensions $w\times\ell$}.
`Non-trivial' means that at least one of the lengths is positive. \emph{The length of a long band} $B$
is defined as the sum of the lengths of all bands contained in $B$.
Two enhanced unions of bands are \emph{isomorphic} if they are isomorphic as
unions of bands and their respective long bands have the same length.

Let $(X,\omega)$ be an arbitrary finite $2$-complex endowed with a closed $1$-form, $Y$ a union of bands.
We say that $Y$ is \emph{a model} for $(X,\omega)$ if there is a continuous surjective map $\theta:Y\rightarrow X$,
called \emph{the projection} of this model, such that:
\begin{enumerate}
\item
$\theta^*\omega=dx$,
\item
the preimage of any leaf of $\mathcal F_\omega$ is a single leaf of $\mathcal F_{dx}$, and
\item
$\theta$ is injective on an open subset $U\subset Y$ whose complement $Y\setminus U$
is contained in finitely many leaves of $\mathcal F_{dx}$.
\end{enumerate}
One can show that every finite $2$-complex
endowed with a closed $1$-form has a model. It is also clear that being a model is a transitive relation.

\subsection{First return correspondence}\label{first-return}
Here we introduce an analogue of the first return map of a dynamical system.

Let $I$ be a closed interval. \emph{A correspondence on $I$} is an equivalence relation on
the Cartesian product $I\times\{{+},{-}\}$.
The exchange ${+}\leftrightarrow{-}$ induces an involution on
the set of correspondences, which we call \emph{a flip}.

Let $\sim_1$, $\sim_2$ be correspondences on intervals $I_1$, $I_2$, respectively,
and let $f:x\mapsto\lambda x+c$ be the unique affine map with $\lambda>0$ such that $f(I_1)=I_2$.
Denote by $\sim_1'$ be the correspondence on $I_1$ defined by the rule:
$(x_1,\epsilon_1)\sim_1'(x_2,\epsilon_2)$ if and only if
$(f(x_1),\epsilon_1)\sim_2(f(x_2),\epsilon_2)$.
If $\sim_1$ and $\sim_1'$ coincide or are obtained from each other by a flip
we say that $\sim_1$ and $\sim_2$ are \emph{similar}.

Let $(X,\omega)$ be a 2-complex with a closed $1$-form, and let
$\sigma\subset X$ be a closed transversal arc such that the interior of $\sigma$
is contained in the interior of a single $2$-cell, and each of the endpoints of $\sigma$
is contained in the interior of the same cell or in the interior of a vertical edge of $X$.
So, the arc $\sigma$ locally cuts every leaf
into two parts. Let us choose a coorientation on $\sigma$, thus assigning
to one of those parts the plus sign, and to the other the minus sign.
We identify $\sigma$ with an interval of $\mathbb R$ by a map $f:\sigma\rightarrow\mathbb R$
such that $df=\omega|_\sigma$.

Now denote by $\sim_\sigma$ the correspondence on $\sigma$ defined as follows:
we have $(p_1,\epsilon_1)\sim_\sigma(p_2,\epsilon_2)$, $p_1,p_2\in\sigma$,
$\epsilon_1,\epsilon_2\in\{{+},{-}\}$, if and only if there is an arc $\alpha\subset X$ such that:
\begin{enumerate}
\item $\alpha$ is contained in a single leaf of $\mathcal F_\omega$;
\item $\alpha\cap\sigma=\partial\alpha=\{p_1\}\cup\{p_2\}$;
\item $\alpha$ approaches $p_i$ from the $\epsilon_i$-side, $i=1,2$.
\end{enumerate}
We say that $\sim_\sigma$ is \emph{the first return correspondence induced by $\mathcal F_\omega$} (or by $\omega$)
\emph{on $\sigma$}. It is uniquely defined up to a flip.

\subsection{Cutting and splitting}\label{cuttingsubsection}
Let $X$ be a union of bands with support multi-interval $D$, and let $B=[a,b]\times[0,1]$ be one of its bands.
Denote by $g_0$ and $g_1$ the gluing maps $[a,b]\times\{0\}\rightarrow D$ and
$[a,b]\times\{1\}\rightarrow D$, respectively. Let us pick $c\in[a,b]$ and choose $\varepsilon>0$
so that the strip $(b,b+\varepsilon]\times[0,1]$ is disjoint from all bands.

Suppose $c\in(a,b)$.
Removing the band $B$ from $X$ and replacing it with two bands $[a,c]\times[0,1]$ and $[c+\varepsilon,b+\varepsilon]\times[0,1]$
with the gluing maps $g_0|_{[a,c]\times\{0\}}$, $g_1|_{[a,c]\times\{1\}}$ for the first one and
$g_0|_{[c,b]\times\{0\}}\circ s_{-\varepsilon}$, $g_1|_{[c,b]\times\{1\}}\circ s_{-\varepsilon}$,
where $s_{-\varepsilon}(x,y)=(x-\varepsilon,y)$,
for the latter, will produce another union of bands $X'$. We say that $X'$ is obtained from~$X$
by \emph{cutting along the vertical arc $\{c\}\times[0,1]$}, see Fig.~\ref{cutting}.
\begin{figure}[ht]
\centerline{\includegraphics{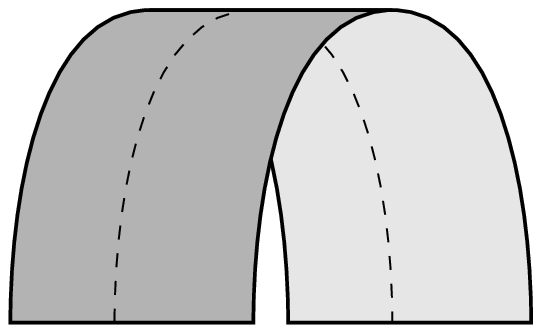}\put(-134,22){\begin{rotate}{-100}{\Large\ScissorLeftBrokenTop}\end{rotate}}\quad
\raisebox{45pt}{$\rightarrow$}\quad
\includegraphics{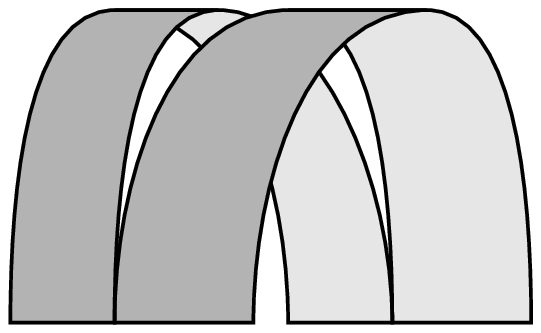}}
\caption{Cutting along a vertical arc}\label{cutting}
\end{figure}

If $c\in\{a,b\}$, then by cutting along the vertical arc $\{c\}\times[0,1]$ we mean the trivial operation,
which leaves $X$ unchanged.

Let $[a,b]$ be a component of $D$, and suppose that there is a point $c\in(a,b)$ to which
no interior point of a base is attached, but some endpoint of a base is attached. Then we can \emph{split $X$ at $c$} (accordingly, $c$
will be called \emph{a splitting point for $X$}), which means the following.
Replace $[a,b]$ with $[a,c]\cup[c+\varepsilon,b+\varepsilon]$ for a small enough $\varepsilon$ and modify
accordingly every attaching map $g$ of each base such that the image of $g$ is contained in $[c,b]$:
$g\mapsto g+\varepsilon$. Additionally, connect the points $c$ and $c+\varepsilon$ by a new degenerate band,
see Fig.~\ref{splitting}.
\begin{figure}[ht]
\centerline{\includegraphics{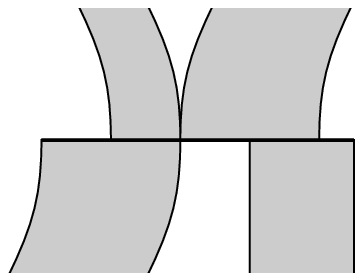}\put(-63.5,42){\begin{rotate}{-80}{\Large\ScissorLeftBrokenBottom}\end{rotate}}\quad
\raisebox{45pt}{$\rightarrow$}\quad\includegraphics{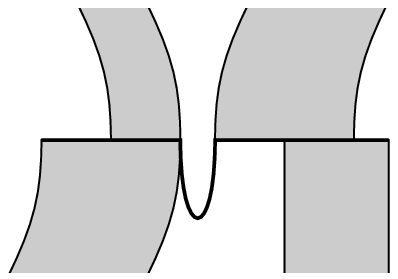}}
\caption{Splitting}\label{splitting}
\end{figure}

Let $\Gamma\subset X$ be a finite graph whose edges are vertical arcs. By \emph{cutting $X$ along $\Gamma$}
we mean producing a new union of bands $X'$ by applying the following two steps:
\begin{enumerate}
\item cut $X$ along every edge of $\Gamma$;
\item split the obtained union of bands at any splitting point.
\end{enumerate}

The union of bands $X'$ obtained from $X$ by cutting along a graph consisting
of vertical arcs is obviously a model for $X$. The model projection $X'\rightarrow X$
is obtained by ``gluing back'' the cuts and contracting every new degenerate band to a point.

Let $B$ be a band of $X$. By \emph{subdividing $B$} we mean replacing $B$ with two new bands $B'$, $B''$ such
that $|B'|=|B''|=|B|$, the $0$-base of $B'$ and the $1$-base of $B''$ are attached to where the $0$-base and $1$-base
of $B$ were, and the other two bases are attached to a newly added to $D$ disjoint closed interval of length~$|B|$.
The new interval in $D$ is called \emph{the subdivision arc}.

Let $\sigma$ be a connected component of $D$. By \emph{cutting $X$ along $\sigma$} we mean detaching
all bands whose bases are glued to $\sigma$ and attach them to newly added to $D$ pairwise disjoint closed intervals.

\subsection{Block decomposition}

The idea behind the block decomposition is to describe the first return correspondence induced by a union
of bands in a manner similar to describing the first return map of a transverse measured and transverse
oriented foliation on a surface by an interval exchange.

Let $\sigma$ be \emph{a horizontal arc} in some band $B=[a,b]\times[0,1]$ of a union of bands $X$,
i.e. an arc of the form $[c,d]\times\{h\}$, with $[c,d]\subset[a,b]$, $h\in(0,1)$.
Denote by $\sigma'$ and $\sigma''$ the arcs in $D$ to which the portions
$[c,d]\times\{0\}$ and $[c,d]\times\{1\}$ of the bases are attached.

Let $L$ be a singular leaf of $\mathcal F_{dx}$. The closure $N$ of a connected component
of $L\setminus(\stackrel\circ\sigma\cup\sing(L))$, where~$\stackrel\circ\sigma$ is the interior of $\sigma$,
is said to be \emph{a singularity extension toward $\sigma$} if 
$N$ has finite diameter and $N$ has a non-empty intersection with $\sing(L)$.
Clearly there are only finitely many singularity extensions toward~$\sigma$.

By \emph{the block decomposition of $X$ induced by $\sigma$} or simply \emph{the $\sigma$-decomposition} we
call the band complex~$X_\sigma$ obtained as follows. First, we cut $X$ along the union of vertical arcs
$\{c\}\times[0,1]$ and $\{d\}\times[0,1]$. Then we subdivide the band that is now
attached to $\sigma'$ and $\sigma''$. We abuse notation by letting $\sigma$
denote the subdivision arc. Finally, we cut the obtained band complex along all
singularity extensions toward~$\sigma$.

A connected component of the union of bands obtained from the solid part of $X_\sigma$
by cutting along $\sigma$  is called \emph{a block} of $X_\sigma$. 
If a block $\mathcal B$ has the form of a trivial fibration over a closed interval with fibre a finite graph,
it is called \emph{a product block}.

\emph{An arm} of a block $\mathcal B$ is a pair $(B,\eta)$, where $B$ is a long band of $\mathcal B$
and $\eta$ is a base of $B$ such that no other band is attached to $\eta$. An arm $(B,\eta)$
is \emph{free} if the image of $\eta$ in $X_\sigma$ is disjoint from $\sigma$. Otherwise
it is said to be \emph{bound to $\sigma$}. The image of $\eta$ in $\sigma$
will then be called \emph{the binding arc} of the arm $(B,\eta)$.

Unless $\mathcal B$ is a single long band the arc $\eta$ is
uniquely defined by $B$, so, we may sometimes speak of $B$ as an arm.

The gluing back map $X_\sigma\rightarrow X$ will be denoted by $\theta_\sigma$. If $X$ is non-degenerate,
then $X_\sigma$ is a model for~$X$ with model projection $\theta_\sigma$.

The following statement is obvious.

\begin{proposition}\label{obviousprop}
Let $Y$ and $Y'$ be unions of bands, $\sigma\subset Y$ and $\sigma'\subset Y'$ horizontal arcs.
Let $f:\sigma\rightarrow\sigma'$ be a homeomorphism such that $f^*(dx)=dx|_\sigma$.
Suppose that all blocks of $Y_\sigma$ and $Y'_{\sigma'}$ are product blocks
and that the first return correspondences induced on $\sigma$ and $\sigma'$ are similar.
Then, for any $p\in\sigma$, the leaf of~$Y$ passing through $p$ is quasi-isometric
to the leaf of $Y'$ passing through $f(p)$.
\end{proposition}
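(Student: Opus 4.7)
The plan is to replace each leaf by a purely combinatorial graph whose quasi-isometry type is controlled entirely by the first return correspondence, and then to read off the proposition from the similarity hypothesis.

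First I would identify the equivalence classes of $\sim_\sigma$ with the ``signed fibres'' of the block fibration of $Y_\sigma$. An arc $\alpha$ witnessing $(p_1,\epsilon_1)\sim_\sigma(p_2,\epsilon_2)$ meets $\sigma$ only at its endpoints, so its preimage in $Y_\sigma$ under $\theta_\sigma$ is contained in a single block; by the product-block hypothesis, that preimage lies in one fibre of that block. Hence the $\sim_\sigma$-classes on $\sigma\times\{+,-\}$ are in bijection with the pairs (fibre of a block, side of $\sigma$ from which the fibre is approached). The same description applies to $\sim_{\sigma'}$, so the similarity hypothesis says exactly that $f$, possibly post-composed with the $\pm$-flip, takes the signed-fibre partition on $\sigma\times\{+,-\}$ onto that on $\sigma'\times\{+,-\}$.

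Second, I would introduce a combinatorial model $\Lambda_L$ of a leaf. Its vertex set is $(L\cap\sigma)\times\{+,-\}$, with an edge between $(p,+)$ and $(p,-)$ for every $p\in L\cap\sigma$ (the $\sigma$-crossing at $p$), and, for every signed fibre meeting $L$, a spanning tree drawn on the vertices that this fibre contains (the block transit through that fibre). Giving $\Lambda_L$ unit edge lengths, the map $\Lambda_L\to L$ that sends $(p,\pm)\mapsto p$ and each edge to a realising arc in $L$ is a quasi-isometry with constants depending only on $Y$: finiteness of the set of blocks of $Y_\sigma$ yields a uniform upper bound $M$ on the diameter of a fibre in $d_L$, while the fixed height of the band containing $\sigma$ gives a uniform positive lower bound $m$ on the $d_L$-distance between any two distinct points of $L\cap\sigma$. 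Construct $\Lambda_{L'}$ analogously; the same argument shows that $\Lambda_{L'}$ is quasi-isometric to $L'$.

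Finally, the signed-fibre bijection from the first step, together with any choice of matched spanning trees on corresponding fibres, produces a graph isomorphism $\Lambda_L\cong\Lambda_{L'}$; composing this isomorphism with the two quasi-isometries of the second step yields the desired quasi-isometry from the leaf of $Y$ through $p$ to the leaf of $Y'$ through $f(p)$. The main obstacle is the uniformity claim of the second step — in particular, one must check that singular leaves, and fibres which meet $\sigma$ at many points, do not inflate the constants — but this reduces to the finite list of blocks and the compactness of each block, both of which hold because $Y$ is a finite $2$-complex.
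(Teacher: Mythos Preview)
Your argument is correct and is exactly the sort of justification the authors leave implicit: the paper offers no proof at all, introducing the proposition with ``The following statement is obvious'' and moving on. Your combinatorial-model construction is the natural unpacking of that obviousness, and the uniformity of the quasi-isometry constants does indeed reduce, as you say, to the finiteness of the block decomposition and the compactness of each block.

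One small imprecision worth flagging: in your first step the phrase ``side of $\sigma$ from which the fibre is approached'' is misleading, since after cutting along $\sigma$ a single block may carry binding arms that were attached to $\sigma$ from \emph{both} coorientation sides, so one fibre can contribute points with both signs to a single $\sim_\sigma$-class. The correct bijection is simply between $\sim_\sigma$-classes and block fibres that meet $\sigma$; the sign is recorded pointwise within a class, not as a global label on the fibre. This does not affect the remainder of your proof, since your model $\Lambda_L$ depends only on the partition of $(L\cap\sigma)\times\{+,-\}$ into classes, and the spanning trees you draw on corresponding classes can be chosen to match under $f$ regardless of how signs are distributed within each class.
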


\subsection{Annulus-free complexes and compact leaves}
A $2$-complex $X$ with a closed $1$-from $\omega$ is \emph{annulus-free} if there is no embedded annulus $A\subset X$
such that $(A,\omega|_A)$ is foliated by circles. Equivalently: every regular leaf of $\mathcal F_\omega$ is simply connected.

To keep exposition simpler we restrict ourselves to considering only annulus-free foliations. Without this hypothesis
definitions become more involved, but nothing essential changes. Namely, if a foliation of thin type is not annulus-free, then
one can make it annulus-free, without changing other properties that we discuss, by
a number of operations like cuttings and subdivisions. For a detailed general account on the Rips machine, of which we consider
only a special case, the reader is referred to~\cite{bf95}.

Let $Y$ be a union of bands with support multi-interval $D$.
As follows from H.Imanishi's theorem~\cite{i79,glp94} there exists a (possible empty) finite union $E$ of subintervals of $D$ such that
every compact leaf of $\mathcal F_{dx}$ intersects~$E$ exactly once and $E$ is disjoint from
non-compact regular leaves. Its measure~$|E|$
is referred to as \emph{the measure of compact leaves of $\mathcal F_{dx}$}. It does not depend
on a particular choice of $E$. Moreover, this measure satisfies the following simple relation if the $2$-complex is
annulus-free.

\begin{proposition}\label{prop-compact-leaves-measure}
Let $Y$ be an annulus-free union of bands with support $D$. Then the measure
of compact leaves equals to the negative excess of $Y$: $|E|=-\exc(Y)$.
\end{proposition}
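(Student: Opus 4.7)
My plan is to express both sides of the identity as a single integral over $D$ and then match them using Imanishi's product decomposition of the compact region. For each $p \in D$ let $m(p)\geq 0$ be the number of band bases glued at $p$. Summing band widths yields $\int_D m(p)\,dp = 2|Y|$, so
\[\int_D(2-m(p))\,dp = 2|D|-2|Y| = -2\exc(Y).\]
It therefore suffices to prove $\int_D(2-m(p))\,dp = 2|E|$. The key observation driving the calculation is that if $p$ lies on a leaf $L$, then $m(p)$ equals the degree of $p$ as a vertex of the graph $L$.

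By Imanishi's theorem, $D$ partitions (up to a measure-zero singular set) into $D_c \sqcup D_n$, where $D_c$ consists of points on compact leaves and $D_n$ of points on non-compact regular leaves. The compact part is a disjoint union of foliated product regions $L_j \times I_j$, where $L_j$ is a representative compact leaf (a finite graph with vertices in $L_j \cap D$ and edges the vertical band arcs) and $I_j \subset D$ is a transversal with $\sum_j |I_j| = |E|$. Since $Y$ is annulus-free, any cycle in some $L_j$ would sweep across $I_j$ to a foliated annulus in $Y$; hence each $L_j$ is a tree, so $\chi(L_j)=1$ and the handshake identity gives $\sum_{v \in L_j \cap D}(2-\deg_{L_j}(v))=2$. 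Using $m(v)=\deg_{L_j}(v)$ on the sub-arc of length $|I_j|$ through $v$,
\[\int_{D_c}(2-m(p))\,dp = \sum_j |I_j|\sum_{v \in L_j\cap D}(2-\deg_{L_j}(v)) = 2|E|.\]

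It remains to show $\int_{D_n}(2-m)\,dp = 0$, equivalently $|Y_n|=|D_n|$ where $Y_n$ is the sub-union-of-bands carrying non-compact leaves. I would separate $Y_c$ from $Y_n$ by cutting along the finite collection of boundary singular leaves between the two regions; since the cuttings and splittings of Section~\ref{cuttingsubsection} preserve both $|Y|$ and $|D|$, this yields $\exc(Y)=\exc(Y_c)+\exc(Y_n)$ with $\exc(Y_c)=-|E|$ by the previous paragraph. The vanishing $\exc(Y_n)=0$ then reduces to showing that at almost every regular point of $D_n$ one has $m=2$: a positive-measure half-boundary sub-arc (on which $m\equiv 1$) would be the base of a single band whose opposite base borders a recurrent non-compact region, and by an application of the Rips-machine normalisation---essentially the structure theorem of Bestvina--Feighn for annulus-free band complexes---such a sub-arc can be either absorbed into the compact part or eliminated without changing $|E|$ or $\exc$. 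This final step is the main obstacle in the argument: it is where annulus-freeness is really used in the non-compact region, and cleanly justifying the elimination of half-boundary arcs (rather than producing a pictorial Rips-move argument) is what would require the most care in a full write-up. Once that is in place, combining the two contributions gives $|E|=-\exc(Y)$.
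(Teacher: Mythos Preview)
The paper does not actually prove this proposition; it simply records that the statement is a reformulation of Proposition~6.1 of Gaboriau--Levitt--Paulin~\cite{glp94}. Your attempt at a direct argument has a correct and pleasant first half but a genuine error in the second.

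The computation $\int_D(2-m)\,dp=-2\exc(Y)$ and the treatment of the compact region via $\chi(L_j)=1$ are fine; this correctly gives $\exc(Y_c)=-|E|$ and reduces the proposition to $\exc(Y_n)=0$.

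The error is your claim that $\exc(Y_n)=0$ amounts to $m=2$ almost everywhere on $D_n$. That is false in general: on the non-compact part, points with $m\geq 3$ occur on sets of positive measure whenever leaves branch---which is exactly what happens in the thin case (already in the three-band picture of Fig.~\ref{orbit} the bases overlap on nondegenerate arcs). Free arcs ($m=1$) and branching arcs ($m\geq 3$) coexist; what must be shown is that they \emph{balance}, i.e.\ $\int_{D_n}(m-2)=0$, not that the integrand vanishes pointwise. Your proposed cure---using Rips moves to absorb or eliminate $m=1$ arcs---cannot produce $m\equiv 2$: a collapse from a free arc preserves $\exc$ (as noted just before the definition of thin type) but does not remove branching, and in the thin case the process runs forever without ever reaching an interval-exchange-like configuration. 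The equality $\exc(Y_n)=0$ for an annulus-free component with no compact leaves is a genuine theorem---essentially Levitt's inequality together with the equality case in~\cite{glp94}---and that nontrivial input is precisely what the paper is outsourcing with its citation. Your write-up would need to supply an independent proof of that fact, not a reduction to a pointwise statement that does not hold.
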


This claim is a reformulation of Proposition~6.1 from~\cite{glp94}.

A union of bands $Y$ with support multi-interval $D$ is called \emph{balanced}
if $\exc(Y)=0$. It follows from Proposition~\ref{prop-compact-leaves-measure} that a balanced
union of bands is annulus-free if and only if it does not have a compact leaf.

\subsection{Foliated $2$-complexes of thin type}

Let $Y$ be a union of bands with support multi-interval $D$. \emph{A~free arc} of~$Y$ is a maximal open interval $J\subset D$
such that it is covered by one of the bases of bands, and all other bases are disjoint from $J$.

Let $J$ be a free arc and $B=[a,b]\times[0,1]$ be the band one of whose bases covers $J$
under the attaching map. Let $(c,d)\subset[a,b]$ be the subinterval such that $(c,d)\times\{0\}$
or $(c,d)\times\{1\}$ is identified with $J$ in $Y$. Let $Y'$ be the union of bands obtained from $Y$ by removing
$J$ from $D$, and $(c,d)\times[0,1]$ from $B$ thus replacing $B$ with two smaller bands $[a,c]\times[0,1]$ and
$[d,b]\times[0,1]$
whose bases are attached to $D$ by the restriction of the attaching maps for the bases of $B$.
If this produces an isolated point of $D$ such that only one degenerate band is attached to it
(which may occur if $a=c$ or $b=d$), the point and the band are removed.
We then say that $Y'$ is obtained from $Y$ by \emph{a collapse from a free arc}, see Fig.~\ref{collapse}.
\begin{figure}[ht]
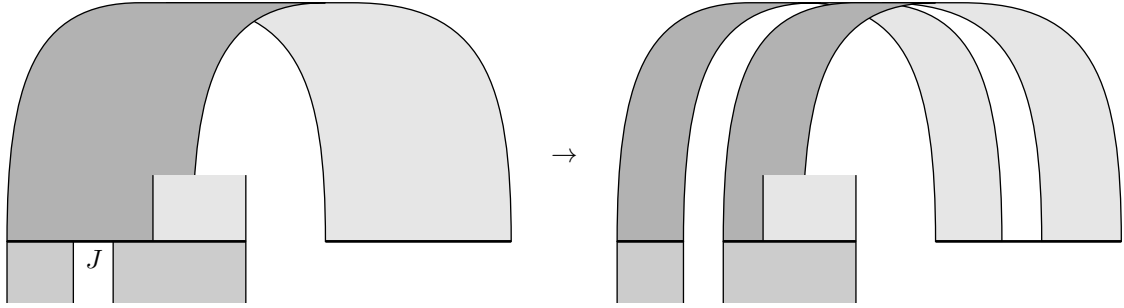

\centerline{\includegraphics{collapse1.eps}\put(-166,15){$J$}\quad
\raisebox{55pt}{$\rightarrow$}\quad
\includegraphics{collapse2.eps}}
\caption{Collapse from a free arc}\label{collapse}
\end{figure}

It is easy to see that the excess does not change under this operation, $\exc(Y')=\exc(Y)$.

If $Y$ is an enhanced union of bands, than $Y'$ is also regarded enhanced.
The length of every band $B'$ of $Y'$ is set to that of the band in $Y$ containing $B'$.

An annulus-free union of bands $Y$ is said to be \emph{of thin type} if the following two conditions hold:
\begin{enumerate}
\item
every leaf of the foliation $\mathcal F_{dx}$ is everywhere dense in $Y$;
\item there is an infinite sequence $Y_0=Y,Y_1,Y_2,\ldots$ in which every
$Y_i$ is a union of bands obtained from $Y_{i-1}$ by a collapse from a free arc.
\end{enumerate}
Such a sequence as well as any infinite subsequence $Y_0,Y_{i_1},Y_{i_2},\ldots$
with $i_1<i_2<\ldots$
is said to be \emph{produced by the Rips machine} or \emph{a Rips sequence} for short.

\begin{remark}
The reader is warned that this definition does not insist on the foliation $\mathcal F_{dx}$
being minimal in the sense of~\cite{bf95}, but this plays no role here.
\end{remark}

An annulus-free finite $2$-complex $X$ endowed with a closed $1$-form is said to be \emph{of thin type} if
some (and then any) model of $X$ is of thin type.

We will need the following two elementary facts about the Rips machine.

\begin{proposition}\label{elem1}
Let $Y$ be an annulus-free union of bands of thin type,
$Y\supset Y_1\supset Y_2\supset\ldots$ and $Y\supset Y_1'\supset Y_2'\supset\ldots$
two sequences produced by the Rips machine. Then for any $k>0$ there exists
$l$ such that $Y_l\subset Y'_k$.
\end{proposition}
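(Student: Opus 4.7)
Plan:

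I would argue by induction on $k$. The base case $k=0$ is trivial (take $l=0$). For the inductive step, given $l$ with $Y_l\subset Y'_k$, write $Y'_{k+1}=Y'_k\setminus R$, where $R$ is the open ``free arc plus half-band'' region removed at the $(k+1)$-st step of the second sequence. Since $Y_{l'}\subset Y_l\subset Y'_k$ for any $l'\geqslant l$, the desired inclusion $Y_{l'}\subset Y'_{k+1}$ is equivalent to $Y_{l'}\cap R=\emptyset$. Thus the induction reduces to the following one-collapse lemma: for any annulus-free thin-type union of bands $Z$ with a Rips sequence $Z=Z_0\supset Z_1\supset\cdots$ and any open region $R\subset Z$ of the form ``free arc plus half-band over it'' (with respect to some union of bands containing $Z$), there exists $m$ with $Z_m\cap R=\emptyset$.

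To prove the one-collapse lemma, I would track how $R\cap Z_i$ evolves with $i$. At each Rips move $Z_i\to Z_{i+1}$, either the free arc being collapsed is disjoint from the surviving part of $R$ (in which case $R\cap Z_{i+1}=R\cap Z_i$), or it chips off a piece of positive measure. The key invariant I would maintain is that $R\cap Z_i$ decomposes as a finite union of ``free arc plus half-band'' regions of $Z_i$, so that the total $D$-length of free arcs contributing to this decomposition is at most $|D_i|$. The thin-type hypothesis, through Proposition~\ref{prop-compact-leaves-measure} combined with the absence of compact leaves (which forces $\exc(Z)=0$), should imply that $|D_i|\to 0$ along any Rips sequence of a thin-type complex. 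Since each collapse of a free arc lying in $R$ removes a strictly positive portion of $R$, and since the surviving free arcs in $R$ must eventually be collapsed because their total length is bounded by a quantity tending to zero, only finitely many Rips moves can meet $R$, whence $R\cap Z_m=\emptyset$ for some $m$.

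The principal obstacle is the combinatorial bookkeeping required to preserve the decomposition invariant when a Rips move of the first sequence splits the band whose half-band constitutes (part of) $R$. In that case the band is replaced by two smaller bands, and one must verify that the remnants of $R$ inside the new bands are still ``free arc plus half-band'' regions of $Z_{i+1}$, i.e., that their bases on the support side are still covered by exactly one base in $Z_{i+1}$; I expect this to follow from the fact that the collapse only removes bases and hence cannot increase the covering multiplicity. A secondary obstacle is the clean justification that $|D_i|\to 0$ for any Rips sequence of a thin-type complex, which is not explicit in the preliminaries but should follow from standard arguments based on the balanced condition. Once the lemma is established, the original proposition follows by the induction outlined above, with the sets $\{Y\setminus Y_l\}_l$ forming an increasing family whose members successively contain each $Y\setminus Y'_k$.
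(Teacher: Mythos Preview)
Your inductive reduction to a ``one-collapse lemma'' is sound and mirrors the implicit structure of the paper's argument. The difficulty is entirely in your proof of that lemma, which contains a genuine gap.

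You want to pass from ``the total $D$-length of the free arcs sitting in $R\cap Z_i$ is bounded by $|D_i|\to 0$'' to ``$R\cap Z_m=\varnothing$ for some finite $m$''. This inference is not valid as stated: a priori, the free arcs contributing to $R\cap Z_i$ could simply shrink as $i$ grows, keeping $R\cap Z_i$ nonempty forever while its measure tends to zero. What is needed, and what you never articulate, is the combinatorial fact that a free arc of $Z_i$ cannot shrink under a collapse: it either survives (possibly absorbed into a larger free arc) or is removed in its entirety by a collapse from it or from a larger arc. Once you know this, any individual free arc of fixed positive length sitting inside $R$ is a permanent lower bound for $|D_j|$ until the step at which it is collapsed; combined with $|D_i|\to 0$ this forces each such arc to be collapsed at some finite stage, and since free arcs only merge and never split, finitely many such collapses empty $R$.

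The paper's proof is considerably shorter and avoids your ``secondary obstacle'' altogether. Instead of $|D_i|\to 0$, it cites from~\cite{bf95} the fact that $\bigcap_i Y_i$ is nowhere dense. It then makes exactly the observation you are missing: a portion of a free arc $J$ can be removed only by a collapse from $J$ or from a larger free arc containing it. Nowhere density guarantees that \emph{some} portion of $J$ is eventually removed; the observation upgrades this to: all of $J$ is collapsed at that same step. Hence every free arc is eventually collapsed, and the first Rips sequence catches up with each step of the second one. Note that nowhere density is weaker than $|D_i|\to 0$ and is available as a citation, whereas your claim $|D_i|\to 0$, though true, is not as immediate as you suggest (the balanced condition alone gives $|D_i|=|Y_i|$ but says nothing about the limit), and in any case is unnecessary once the ``free arcs don't shrink'' observation is in hand.
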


\begin{proof}
As shown in \cite{bf95} the intersection $\bigcap_{i=1}^\infty Y_k'$
is nowhere dense. It means, that if $J\subset Y_k$ is a free arc, then
eventually a portion of $J$ will be removed by the Rips machine.
This may occur only as a result of a collapse from $J$ or a larger arc.
This means that if the Rips machine \emph{can} remove some portion of~$Y_k$,
then it \emph{will} eventually do so.
\end{proof}

We call a horizontal arc $\sigma\subset Y$ in a union of bands \emph{inessential} if,
for a Rips sequence $Y\supset Y_1\supset Y_2\supset\ldots$, the intersection
$\sigma\cap Y_k$ is finite for all but finitely many $k$.
We will express this by saying that $\sigma$ is \emph{removed} by the Rips machine \emph{almost completely}.
It follows from Proposition~\ref{elem1} that the choice of the
Rips sequence does not matter in this definition.

If a horizontal arc is not inessential it is called \emph{essential}.

The same terminology applies if $\sigma$ is a subinterval of the support multi-interval of $Y$.

\begin{proposition}\label{essential->product}
Let $Y$ be an annulus-free union of bands of thin type, $\sigma$ an essential
horizontal arc. Then all blocks of $Y_\sigma$ are product ones.
\end{proposition}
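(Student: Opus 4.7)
The plan is to argue by contrapositive: assuming some block $\mathcal{B}$ of $Y_\sigma$ is not a product block, I will show $\sigma$ is inessential. The first step is to recognize product blocks in terms of singularities. Since in a product block $F\times[a,b]$ with $\omega=dx$ every vertical edge of the 2-complex lies on $F\times\{a\}$ or $F\times\{b\}$, a block is a product iff all its singular vertical edges occur at the extreme $x$-values of the block's $x$-range (which, upon undoing the cut along $\sigma$, lie on $\sigma$ itself). Hence $\mathcal{B}$ being non-product means there is a singular vertical edge $e$ of $\mathcal{B}$ at an $x$-value $x_0$ strictly between the two extremes, and the singular leaf $L\ni e$ cannot be connected back to $\sigma$ within $\mathcal{B}$ by any arc of finite diameter, because every singularity extension toward $\sigma$ was removed when $Y_\sigma$ was formed.

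Next I would extract an actual free arc of $Y_\sigma$ from this interior singularity. The vertical edge $e$ is the end of some band base of $\mathcal{B}$ at a point $c$ interior to the support multi-interval of $\mathcal{B}$. If the side of $c$ opposite to the terminating band were also covered by bases of bands of $\mathcal{B}$, then tracing the leaves along those bases toward $\sigma$ would either produce a finite-diameter arc in $L$ joining $e$ to $\sigma$ (contradicting the cutting of singularity extensions) or encounter another interior singularity where the same dichotomy recurs. Because $\mathcal{B}$ has only finitely many bands, this descent must terminate, and it can only terminate by the appearance of an honest free arc $J$ of $Y_\sigma$ sitting inside $\mathcal{B}$. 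Since $\theta_\sigma$ is injective off a finite union of leaves, $J$ projects to a free arc of $Y$ adjacent to the binding arc of the arm of $\mathcal{B}$ in question.

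By Proposition~\ref{elem1} the Rips machine is then obliged to collapse $J$ eventually, and likewise every free arc of $Y$ that the collapse subsequently exposes inside the image of $\mathcal{B}$. Since $\mathcal{B}$ consists of only finitely many bands, an induction on the number of interior singularities of $\mathcal{B}$ (or on the excess of its image) shows that after finitely many steps the whole arm, and hence its binding arc in $\sigma$, is swallowed. Doing this for each of the finitely many non-product blocks produces a single finite stage of the Rips machine after which $\sigma\cap Y_k$ is finite, i.e.\ $\sigma$ is inessential. The main obstacle, in my view, is the middle step: passing from the qualitative assertion ``there is an interior singularity inaccessible from $\sigma$ by short paths'' to the combinatorial assertion ``there is an actual free arc inside $\mathcal{B}$''. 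This requires careful bookkeeping of how cuts along singularity extensions interact with the web of band bases in the support of $Y_\sigma$, and of how free arcs of the model $Y_\sigma$ descend to free arcs of $Y$.
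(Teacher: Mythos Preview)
Your argument has a real gap precisely at the step you flag. The claim that the descent ``must terminate because $\mathcal B$ has only finitely many bands'' is not justified: what you are tracing is a half-leaf of the singular leaf $L$ inside $\mathcal B$, and in a non-product block such half-leaves are \emph{infinite}. Since $Y$ is of thin type, $\mathcal B$ contains no compact leaves; a half-leaf issuing from your point $c$ will pass through the finitely many bands infinitely often without ever being forced either to reach $\sigma$ or to meet a new singularity. So your dichotomy is missing a third alternative, and no free arc of $Y$ is produced. The induction you propose is also not well-founded: by Proposition~\ref{prop-compact-leaves-measure} the absence of compact leaves in $\mathcal B$ gives $\exc(\mathcal B)=0$, and excess is invariant under collapse from a free arc, so one cannot induct on it; the number of ``interior singularities'' need not decrease under a collapse either.

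The paper takes a different route, and the excess identity you do not use is the key. One does \emph{not} look for free arcs of $Y$ inside $\mathcal B$. Instead one fixes a binding arc $\eta$ of $\mathcal B$ and forms the decomposition $Y_\eta$. Because $\exc(\mathcal B)=0$, an excess count over the blocks of $Y_\eta$ forces every block \emph{other than the one containing $\mathcal B$} to be a product block with a single binding arc, and these arcs tile $\eta$. Such blocks are removed by the Rips machine in finitely many steps, so each binding arc of $\mathcal B$ is inessential. Once all binding arcs of $\mathcal B$ have been removed almost completely, the image of $\mathcal B$ still traps an unbounded piece of every leaf (again, no compact leaves), so only finitely many leaves can reach the rest of $\sigma$, and $\sigma$ is inessential. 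In short, the missing idea in your outline is to exploit $\exc(\mathcal B)=0$ to control what lies on the \emph{other} side of $\sigma$ from $\mathcal B$, rather than to search for free arcs within $\mathcal B$ itself.
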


\begin{proof}
Suppose otherwise, that some block $\mathcal B$ of $Y_\sigma$ is
not a product block. Then the foliation $\mathcal F_{dx}$ on $\mathcal B$
does not have compact leaves and is still annulus-free, which implies
$\exc(\mathcal B)=0$ as follows from Proposition~\ref{prop-compact-leaves-measure}.
Let $\eta_1,\eta_2,\ldots,\eta_r$ be the binding arcs of $\mathcal B$.
They are pairwise disjoint as otherwise
the image of $\mathcal B$ in $Y_\sigma$ would have positive excess
and not be annulus-free. So, we may not distinguish
between $\mathcal B$ and its image in $Y_\sigma$.

\begin{lemma}
The arcs $\eta_1,\ldots,\eta_r\subset Y$ are inessential.
\end{lemma}

\begin{proof}
Let us choose one of them, $\eta=\eta_i$, say, and consider
the block decomposition $Y_\eta$. Since there is no compact leaves
in $\mathcal B$, there will be no singularity extensions toward $\eta$
that passes through $\theta_\sigma(\mathcal B)$. It means that $Y_\eta$
will have a possibly larger block $\mathcal B'$ such that it will be not a product block
and $\eta$ will be one of its binding arcs.

Let $\mathcal B_1,\ldots,\mathcal B_s$ be the other blocks of $Y_\eta$.
Cutting $Y_\eta$ along $\eta$ reduces the excess by $|\eta|$, so, we have
\begin{equation}\label{excesssum}
\sum_{j=1}^s\exc(\mathcal B_j)=-|\eta|.
\end{equation}
On the other hand, if $\xi_1,\ldots,\xi_t$ are all binding arcs of $\mathcal B_j$'s,
then
\begin{equation}\label{bindingsum}
\sum_{j=1}^t|\xi_j|\leqslant|\eta|.
\end{equation}
If $\mathcal B_j$ is a product block,
each its binding arc has weight $-\exc(\mathcal B_j)$. If $\mathcal B_j$
is not a product block, then $\exc(\mathcal B_j)=0$.
Any block must have at least one binding arc.

This together with~\eqref{excesssum} and \eqref{bindingsum}
imply that all $\mathcal B_j$, $j=1,\ldots,s$, are product blocks each
with a single binding arc, and their binding arcs cover $\eta$
without overlaps. Each such block is removed by the Rips machine
in finitely many steps, hence, $\eta$ considered as an arc in $Y_\eta$
will eventually be removed completely.
It is easy to see that this implies that $\eta$ will also be
removed almost completely by the Rips machine run on $Y$ instead of $Y_\eta$.
\end{proof}
Now run the Rips machine on $Y$ until all the arcs $\eta_1,\ldots,\eta_r$ are removed
almost completely. Let $Y'\subset Y$ be the obtained union of bands at this stage.

Since there are  no compact leaves in $\mathcal B$ the intersection
$\theta_\sigma(\mathcal B)\cap Y'$ still contains an unbounded portion of
any leaf $L\subset Y$. Since $\eta_1\cup\ldots\cup\eta_r$
is removed almost completely, only finitely many leaves can
``escape'' from $\theta_\sigma(\mathcal B)\cap Y'$ through $Y'$ and intersect
$\sigma\setminus(\eta_1\cup\ldots\cup\eta_r)$. Hence, the subset
$\sigma\cap Y'$ is at most countable. But due to the nature of the Rips machine,
this intersection has form of a finite union of possibly degenerate
closed intervals, so, it should be either finite or uncountable. Thus, it is finite, which means that $\sigma$
is removed almost completely.
\end{proof}

\subsection{Trees}
\emph{An $\mathbb R$-tree} is a metric space $(T,d)$ in which any two points can be connected by a unique
simple arc, and any simple arc $\sigma\subset T$ is isometric to an interval of $\mathbb R$.
We omit `$d$' in notation unless the same $T$ is considered with different metrics.

An $\mathbb R$-tree is called \emph{a simplicial tree} or simply \emph{a tree} if it is homeomorphic to a locally finite 1-dimensional simplicial
complex.

For any point $p\in T$ of an $\mathbb R$-tree the connected components of $T\setminus\{p\}$
are called \emph{branches} of $T$ at $p$.
A point $p\in T$ is \emph{a branch point} if there are at least three branches of $T$ at $p$.

A point $p\in T$ of a simplicial tree is \emph{a terminal point} if there is just one edge attached to $p$.
Such an edge is also called \emph{terminal}.

A branch of a simplicial tree is \emph{inessential} if it's diameter is finite.
The complement to the union of all inessential branches of $T$ is called
\emph{the core} of $T$ and denoted $\core(T)$.

A simplicial tree is \emph{$k$-ended} if it has exactly $k$ topological ends.
Equivalently, $k$ is the maximal number of pairwise disjoint rays in $T$, where
\emph{a ray} is a subtree in $T$ isometric to $[0,\infty)$.
For example, $1$-ended trees are those that are not compact and have
empty core, and $2$-ended trees are those whose core is a line.

The terms `(inessential) branch' and `core' also apply when $T$ is an
arbitrary locally finite connected simplicial complex, using the same definition.

An $\mathbb R$-tree equipped with an action of a group $G$ by isometries is
called \emph{a $G$-tree}. The $G$-orbit of a pair $(p,\beta)$, where $p\in T$
is a branch point and $\beta$ is a branch of $T$ at $p$, is called
\emph{a branching direction of the $G$-tree $T$}.

A continuous map $\ori:T\rightarrow\mathbb R$ is \emph{an orientation map}
if for any $p,q\in T$ the distance $d(p,q)$ is equal to the total variation of
$\ori$ over the arc connecting $p$ and $q$. Two orientation maps
are \emph{equivalent} if their difference is constant. \emph{An orientation}
of an $\mathbb R$-tree is an equivalence class of orientation maps.
The orientation defined by an orientation map $\ori$ is denoted by $[\ori]$.
A $G$-tree is called \emph{oriented} if it is oriented as an $\mathbb R$-tree,
and the orientation is $G$-invariant.

An arc $\sigma\subset T$ in an $\mathbb R$-tree is \emph{monotonic} with respect to
the orientation defined by an orientation map~$\ori$ if the restriction of $\ori$ to $\sigma$
is injective.

An oriented $G$-tree $(T,d,[\ori])$ is \emph{self-similar} if for some $\lambda\ne1$, there is an orientation
preserving isometry $\Phi:(T,d,[\ori])\rightarrow(T,\lambda d,[\lambda\ori])$
and an automorphism $\phi$ of the group $G$ such that
$\phi(g)\cdot p=\Phi\bigl(g\cdot\Phi^{-1}(p)\bigr)$ for all $g\in G$, $p\in T$.
Such $\Phi$ is called \emph{a homothety} of the $G$-tree $T$, and $\lambda$
\emph{the shrink factor} of $\Phi$.

\subsection{Associated $\mathbb R$-tree}
Let $(X,\omega)$ be an annulus free finite 2-complex endowed with a closed $1$-form, $H$ the subgroup of
the fundamental group $\pi_1(X)$ normally generated by all loops contained in singular leaves,
and let $\widetilde X$ be the covering of $X$ corresponding to $H$. Denote by
$\proj_X$ the projection $\widetilde X\rightarrow X$ and by $G(X)$ the group
$\pi_1(X)/H$.

The foliation on $\widetilde X$
whose leaves are $\proj_X^{-1}(L)$, where $L$ is a leaf of $\mathcal F_\omega$
will denoted by $\widetilde{\mathcal F}_\omega$. We abuse notation by using the same letter $\omega$
for the preimage $\proj_X^*\omega$ of $\omega$ on $\widetilde X$.

Let $T$ be the set of leaves of $\widetilde{\mathcal F}_\omega$. We endow $T$ with the following pseudodistance $d$:
$$d(L_1,L_2)=\inf_\gamma\int_\gamma|\omega|,$$
where the infumum is taken over all paths $\gamma$ connecting a point from $L_1$ to a point from $L_2$.

As G.Levitt shows in~\cite{l93b} (in different terms) $d$ is actually a metric, and
$(T,d)$ is an $\mathbb R$-tree. One can also see that it is naturally
an oriented $G$-tree with an orientation map given by
$$\ori(L)=\int_p^q\omega,\text{ where }p\in\widetilde X\text{ is a fixed point, and }q\in L.$$
Thus constructed oriented $G(X)$-tree $(T,d,[\ori])$ is said to be \emph{associated with} $(X,\omega)$.
We also say that $(X,\omega)$ \emph{resolves} this oriented $G$-tree, where $G=G(X)$.

\section{Self-similar foliated $2$-complexes}\label{section-self-similar}

\begin{theorem}\label{matintheorem1}
Let $(X,\omega)$ be an annulus-free $2$-complex with a closed $1$-form.
If it is of thin type and the associated oriented $G(X)$-tree is self-similar, then
the union of leaves of the foliation $\mathcal F_\omega$ defined by $\omega$ that
are not $1$-ended trees has zero measure. Moreover, the union of the cores
of all leaves of $\mathcal F_\omega$ has Hausdorff dimension in the interval $(1,2)$.
\end{theorem}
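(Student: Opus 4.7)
The plan is to translate the tree-level self-similarity into a contracting iterated function system on a transversal whose attractor equals the intersection of the union of cores with that transversal, and then control its Hausdorff dimension.

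Fix a band complex model $Y$ for $(X,\omega)$ and an essential horizontal arc $\sigma\subset Y$. By Proposition~\ref{essential->product} the block decomposition $Y_\sigma$ consists entirely of product blocks, so $\sim_\sigma$ is the suspension of a system of partial isometries. Let
\[
C_\sigma=\{p\in\sigma:\text{the forward and backward $\sim_\sigma$-orbits of $p$ are both infinite}\};
\]
this is exactly the intersection of $\sigma$ with the union of cores of all leaves. The intersection of $\sigma$ with the union of non-$1$-ended leaves is the saturation of $C_\sigma$ under the countable equivalence relation generated by $\sim_\sigma$; since $\sim_\sigma$ preserves Lebesgue measure (being a system of partial isometries), this saturation has the same Lebesgue measure as $C_\sigma$. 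It therefore suffices to show that $C_\sigma$ has Hausdorff dimension strictly between $0$ and $1$.

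The self-similarity of the associated $G(X)$-tree provides a homothety $\Phi$ of shrink factor $\lambda\in(0,1)$, equivariant under some $\phi\in\mathrm{Aut}(G(X))$. I would run the Rips machine until reaching a stage $Y_k\subset Y$ in a Rips sequence at which the $\phi$-equivariance of $\Phi$ descends to an isomorphism of foliated complexes between $Y_k$ and $Y$ rescaled by $\lambda$; concretely, one extracts an essential horizontal arc $\tau\subset Y_k$ and an affine homeomorphism $h:\sigma\to\tau$ of ratio $\lambda$ identifying $\sim_\sigma$ with $\sim_\tau$. Composing $h$ with the return maps from $\tau$ back to $\sigma$ inside the original $Y$ produces a finite family $f_1,\ldots,f_N$ of affine similarities of $\sigma$, each of ratio $\lambda$, with pairwise disjoint-interior images, whose invariant set is precisely $C_\sigma$.

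Both dimension bounds now follow from Moran's formula. The strict inequality $N\lambda<1$ holds because the free arcs collapsed by the Rips machine between stages $0$ and $k$ produce sub-arcs of $\sigma$ that are disjoint from every $f_i(\sigma)$; the presence of such free arcs is exactly the thin-type hypothesis applied to the annulus-free $Y$. The inequality $N\geqslant 2$ holds because $N=1$ would make $C_\sigma$ a single fixed point, contradicting the uncountability of $2$-ended leaves established in \cite{bf95,g96}. The open set condition is satisfied because distinct images $f_i(\sigma)$ are separated by distinct blocks of $Y_k$. Consequently the Hausdorff dimension of $C_\sigma$ equals $s=\log N/\log(1/\lambda)\in(0,1)$, its Lebesgue measure is zero, and the local product structure along leaves gives the union of cores of $\mathcal F_\omega$ in $X$ Hausdorff dimension $1+s\in(1,2)$.

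The main obstacle is the geometric realisation of $\Phi$: the tree homothety gives only an abstract equivalence of return dynamics at scales $1$ and $\lambda$, and producing the sub-arc $\tau$ and the affine identification $h$ requires committing to a specific Rips stage at which the structure repeats itself and verifying that the associated IFS covers $C_\sigma$ exactly. A second, subtler point is the strict inequality $N\lambda<1$: this is precisely where thin type is decisive, distinguishing our situation from a measure-preserving renormalisation in which one would only obtain $N\lambda=1$ and $s=1$.
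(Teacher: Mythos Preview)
Your overall strategy---reduce self-similarity of the tree to periodicity of a Rips sequence, then read off the dimension---matches the paper's, but there are two substantive gaps.

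\medskip
\textbf{The IFS is not single-type with a common ratio.} Your central claim is that $C_\sigma$ is the attractor of $N$ similarities of a single arc $\sigma$, each of ratio $\lambda$, so that Moran gives $\dim C_\sigma=\log N/\log(1/\lambda)$. This is not what the periodic Rips picture actually produces. When $Y'\subset Y$ is isomorphic to $\lambda^{-1}Y$ (paper's convention $\lambda>1$), the complex $Y$ has several long bands $B_1,\dots,B_m$ of generically different widths, and each long band $B_j'$ of $Y'$ occurs inside the various $B_i$ according to a nonnegative integer matrix $A$ (the same matrix that transforms the band lengths by $\overrightarrow{\ell'}=\overrightarrow{\ell}A$). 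Consequently $\mathcal C(Y)\cap B_i$ is a union of translated $\lambda^{-1}$-scaled copies of the sets $\mathcal C(Y)\cap B_j$ for various $j$, not of a single set. This is a graph-directed self-similar system, and its transverse Hausdorff dimension is $\log\mu/\log\lambda$, where $\mu$ is the Perron--Frobenius eigenvalue of $A$, not $\log N/\log\lambda$ for an integer $N$. The decisive inequality is $\mu<\lambda$, and your ``$N\lambda<1$ because free arcs are removed'' does not prove it: losing measure on one transversal does not bound the spectral radius. The paper's argument assigns to the bands the Perron eigenvector of $A$ as lengths, so that the total \emph{area} of $Y'$ equals $\mu\lambda^{-1}$ times that of $Y$; since a Rips collapse strictly removes area, $\mu<\lambda$. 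Likewise the lower bound is $\mu>1$ (lengths of long bands strictly increase), not ``$N\geqslant2$''.

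\medskip
\textbf{The periodic stage does not come for free.} You write that you would run the Rips machine until ``the $\phi$-equivariance of $\Phi$ descends to an isomorphism of foliated complexes between $Y_k$ and $Y$ rescaled by $\lambda$'', and you flag this as the main obstacle. It is indeed most of the work, and it does not follow just by running the machine. The paper first replaces $Y$ by a \emph{reduced} model with a distinguished free edge, then replaces $\Phi$ by a power so that it fixes every branching direction of $T$, and only then can it build a geometric realisation $\Phi_{\mathrm g}:X\to X$ fixing a point on that edge and locally injective there. From this one gets that the first return correspondences on $\sigma$ and on $\lambda^{-1}\sigma$ are similar; periodicity of the (solid parts of the) Rips sequence up to rescaling is then deduced via a finiteness argument (there are only finitely many efficient block patterns compatible with a given correspondence). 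None of these steps is automatic, and without the ``reduced $+$ free edge $+$ branching-direction-preserving'' normalisations the geometric realisation need not exist.
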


The proof of this theorem occupies the rest of this section. The plan is as follows. First, we
reduce the general case to a situation when several additional technical assumptions hold.
Then, under those additional assumptions, we show that self-similarity of
the associated $\mathbb R$-tree implies that of the first return correspondence induced on
a horizontal arc. Finally, we use the self-similarity of the first return correspondence
to construct a ``periodic'' Rips sequence,
which make the argument used for particular examples \cite{bf95,s1} work in the general case.

All $2$-complexes with a closed $1$-from are assumed to be annulus-free
in the sequel.

\subsection{More assumptions}

For a $2$-complex endowed with a closed $1$-form $(X,\omega)$ of thin type
we denote by $\mathcal C(X)$ the union of cores of all leaves of $\mathcal F_\omega$.

Let $(Y,dx)$ be a model for $(X,\omega)$, and let $\theta:Y\rightarrow X$ be the model
projection. The following are easy consequences of the model definition:
\begin{lemma}
The projection $\theta$ induces an isomorphism $G(Y)\rightarrow G(X)$.

$(Y,dx)$ resolves the same oriented $G(X)$-tree $T$ as $(X,\omega)$ does.

$\mathcal C(X)$ is a subset of $\theta(\mathcal C(Y))$, and the difference
$\theta(\mathcal C(Y))\setminus \mathcal C(X)$ can be covered by finitely many arcs.
\end{lemma}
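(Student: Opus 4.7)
\medskip

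\noindent\textbf{Proof proposal.} The three assertions are essentially consequences of the three axioms of a model together with the fact that $\theta$ respects the closed $1$-form and the leaf decomposition. I would treat them in order, using the same basic tool: the open set $U\subset Y$ on which $\theta$ is injective, together with the fact that $Y\setminus U$ lies in finitely many leaves of $\mathcal{F}_{dx}$.

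For the first assertion, the plan is to show that $\theta_*\colon\pi_1(Y)\to\pi_1(X)$ descends to an isomorphism of the quotients by $H_Y$ and $H_X$. The map is well defined on the quotients because any loop contained in a singular leaf of $\mathcal{F}_{dx}$ is sent by $\theta$ to a loop contained in the corresponding (and unique) singular leaf of $\mathcal{F}_\omega$, so $\theta_*(H_Y)\subset H_X$. Surjectivity is obtained by pushing any loop in $X$ off the finitely many exceptional leaves (by a small transverse perturbation into $\theta(U)$) and then lifting through $\theta|_U$, which is an open embedding. Injectivity is the core step: a loop $\gamma\subset Y$ whose image $\theta(\gamma)$ is trivial in $G(X)$ is the boundary of a disk in $X$ modulo loops in singular leaves; by general position we may assume the disk misses the finite bad locus and hence pulls back through $\theta|_U$.

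For the second assertion, I would transfer the identification of fundamental groups to the covering spaces. Choose basepoints compatibly and lift $\theta$ to a $G$-equivariant map $\widetilde\theta\colon\widetilde Y\to\widetilde X$, using the isomorphism from part~1 to identify the two groups~$G$. Condition (2) of the model definition lifts: the preimage under $\widetilde\theta$ of a leaf of $\widetilde{\mathcal F}_\omega$ is a single leaf of $\widetilde{\mathcal F}_{dx}$, because a connected component of the preimage of a connected leaf is a leaf and the deck group acts transitively on preimage components. This yields a $G$-equivariant bijection between the leaf spaces. Equality of the pseudodistances and orientation maps follows directly from $\theta^*\omega=dx$, since both are defined by integrating $|\omega|$ or $\omega$ along paths, and paths in $X$ lift to paths in $Y$ (away from the thin bad locus) without change of integral. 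Thus the two oriented $G$-trees are canonically isomorphic.

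For the third assertion, classify leaves of $\mathcal{F}_{dx}$ as \emph{good} (entirely contained in $U$) or \emph{bad} (meeting $Y\setminus U$); bad leaves are finite in number. On each good leaf $\widetilde L$, the restriction $\theta|_{\widetilde L}\colon\widetilde L\to L$ is a continuous bijection between locally finite $1$-complexes that preserves the local cell structure, hence a homeomorphism. Such a homeomorphism sends inessential branches to inessential branches and, therefore, maps $\mathrm{core}(\widetilde L)$ bijectively onto $\mathrm{core}(L)$. Hence every point of $\mathcal{C}(X)$ lying on a good leaf belongs to $\theta(\mathcal{C}(Y))$, and the same for points on a singular leaf of $\mathcal{F}_\omega$ whose preimage leaf is good. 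For the finitely many singular leaves of $\mathcal{F}_\omega$, one verifies directly that $\mathrm{core}(L)\subset\theta(\mathrm{core}(\widetilde L))$ by examining how $\theta$ identifies finitely many vertical edges or terminal branches of $\widetilde L$. The discrepancy $\theta(\mathcal{C}(Y))\setminus\mathcal{C}(X)$ can only arise from points where $\theta$ identifies an essential branch of some $\widetilde L$ with an inessential one (possibly in the same or a different leaf), and such points lie in the images of the finitely many bad leaves. Since each bad leaf contributes only near the finite set of places where $\theta$ fails to be injective, the excess is contained in finitely many arcs in $X$.

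The main obstacle is part~3, and within it the bookkeeping near the bad leaves: one must show that $\theta$ can collapse inessential branches to essential ones (thereby creating extraneous image of $\mathcal{C}(Y)$) only along finitely many arcs, and that no collapse can annihilate a point of $\mathcal{C}(X)$. I would handle this by analyzing the local picture of $\theta$ at each singular leaf of $\mathcal{F}_\omega$, using that such a leaf contains only finitely many singular points and that $\theta^{-1}$ of each singular point is a finite set.
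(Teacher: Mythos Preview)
The paper does not actually give a proof of this lemma: it is introduced with the sentence ``The following are easy consequences of the model definition'' and left at that. So there is no paper argument to compare against; your proposal is a fleshing-out of something the authors treat as routine.

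Your outline is essentially the natural one and is correct in spirit. Two places could be tightened. In part~1, invoking ``general position'' to push a null-homotopy disk off the bad locus is not automatic in a $2$-complex the way it is in a manifold; a cleaner route is to observe that each non-trivial fiber of $\theta$ is a connected finite subtree of a leaf (hence contractible), so $\theta$ already induces an isomorphism on $\pi_1$, after which matching $H_Y$ with $H_X$ is immediate from condition~(2). In part~3, the assertion that $\theta|_{\widetilde L}$ is a homeomorphism for good leaves needs one more word: a continuous bijection of locally finite graphs is a homeomorphism provided it is proper, and properness here follows because $\theta$ is a quotient map with compact fibers. Once you have a homeomorphism of locally finite trees, inessential (equivalently, relatively compact) branches are preserved, and your core argument goes through.

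Your final paragraph correctly isolates the only genuinely delicate point---bookkeeping on the finitely many bad leaves---and gives the right reason the discrepancy is covered by finitely many arcs.
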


This means that it is enough to prove the assertion of the theorem for $Y$ instead of $X$
or for any other $2$-complex for which $Y$ is a model.

Let $Y_0=Y\supset Y_1\supset Y_2\supset\ldots$ be a Rips sequence.

\begin{lemma}
The inclusion $Y_k\hookrightarrow Y$ induces an isomorphism $G(Y_k)\rightarrow G(Y)$.
The union of bands $Y_k$ resolves the same oriented $\mathbb R$-tree as
$Y$ does.

The ``output'' $\bigcap_{k=0}^\infty Y_k$ of the Rips machine does not depend on the
particular choice of the sequence and coincides with $\mathcal C(Y)$.
For all $k$ we have $\mathcal C(Y_k)=\mathcal C(Y)$.
\end{lemma}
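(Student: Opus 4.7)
The strategy is to prove each of the four assertions by induction on $k$, reducing to the case of a single collapse from a free arc $J\subset D$ whose associated band piece is $(c,d)\times[0,1]$. The key geometric fact I would exploit is that the open removed set $J\cup((c,d)\times[0,1))$ deformation retracts onto the surviving top edge $g_1((c,d)\times\{1\})$, with the retraction carried out along vertical arcs on which $\omega=dx$ vanishes. This single observation will be the engine for all four parts.

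For the isomorphism $G(Y_k)\to G(Y)$, the deformation retraction shows that the inclusion $Y_1\hookrightarrow Y$ is a homotopy equivalence, so induces $\pi_1(Y_1)\cong\pi_1(Y)$. To pass to the quotients by the subgroups normally generated by loops in singular leaves, I would observe that every singular leaf $L$ of $Y$ restricts to a single connected singular leaf $L\cap Y_1$ in $Y_1$: the portion of $L$ removed consists of vertical arcs dangling from the top edge, and the deformation retraction carries each loop in $L$ to a freely homotopic loop in $L\cap Y_1$. Thus the normal subgroups $H$ used in defining $G(Y)$ and $G(Y_1)$ correspond under the $\pi_1$-isomorphism.

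For the second assertion, I would lift the inclusion to the covers, $\widetilde Y_1\hookrightarrow\widetilde Y$, and use the leaf correspondence $L\mapsto L\cap\widetilde Y_1$ to obtain a $G$-equivariant bijection on the sets of leaves. Because the retraction runs along vertical arcs (zero $|\omega|$-length), any path in $\widetilde Y$ between two leaves can be pushed into $\widetilde Y_1$ without changing $\int|\omega|$, so the pseudodistance $d$ is preserved. Fixing the basepoint of $\ori$ inside $Y_1$ makes the orientation maps correspond, so the two associated oriented $G$-trees are equal.

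For the last two assertions, independence of the Rips sequence is immediate from Proposition~\ref{elem1}: two sequences $(Y_k)$ and $(Y'_k)$ are cofinal, so $\bigcap_k Y_k=\bigcap_k Y'_k$. The inclusion $\mathcal C(Y)\subseteq\bigcap_k Y_k$ holds because each free arc collapse only removes material that, inside the affected leaves, forms bounded-diameter tails dangling from the top edge, hence lies in inessential branches, hence outside $\core$. The main obstacle is the reverse inclusion: I would argue that if a point $p$ lies in an inessential branch $\beta$ of its leaf, then $\beta$ has finite diameter, and by Proposition~\ref{elem1} together with the thin type assumption (which forces $\bigcap_k Y_k$ to be nowhere dense), the complex $Y_k$ containing $\beta$ must eventually present a free arc inside $\beta$'s support; collapsing it removes a positive-weight piece of $\beta$, and iterating eventually eliminates $p$. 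The equality $\mathcal C(Y_k)=\mathcal C(Y)$ then follows by applying the characterization to the tail sequence $Y_k\supset Y_{k+1}\supset\cdots$, which is itself a Rips sequence with output $\bigcap_{j\geqslant k}Y_j=\bigcap_{j\geqslant 0}Y_j=\mathcal C(Y)$.
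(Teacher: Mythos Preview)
Your approach coincides with the paper's. For the first two assertions the paper says only that they ``follow easily from the definition of a collapse from a free arc'' and inducts; your deformation retraction along vertical arcs is precisely the geometry behind that phrase, and your handling of the correspondence of singular leaves (hence of the normal subgroups $H$) is more explicit than anything the paper writes. One small omission: you should also note that a leaf which is regular in $Y$ but acquires a new vertical edge in $Y_1$ contributes nothing to $H(Y_1)$, since by the annulus-free hypothesis it was already simply connected.

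For the last two assertions the paper is terser still: it simply cites \cite{bf95} for the fact that the Rips machine eventually removes every inessential branch of every leaf. Your use of Proposition~\ref{elem1} for independence of the output, and your argument that a single collapse removes only terminal (hence inessential) segments of leaves, giving $\mathcal C(Y)\subseteq\bigcap_k Y_k$, are both correct. Your sketch of the reverse inclusion, however, is not a proof as it stands: knowing that $\bigcap_k Y_k$ is nowhere dense and that each collapse removes some terminal segment does not by itself force a \emph{specific} point $p$ of a finite branch $\beta$ to be reached in finitely many steps; one needs to argue that the terminal edges of $\beta$ themselves eventually lie over free arcs, and then induct on the number of edges of $\beta$. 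That is exactly what the cited result of \cite{bf95} provides, so you are not doing worse than the paper here---but the phrases ``a free arc inside $\beta$'s support'' and ``iterating eventually eliminates $p$'' would need to be sharpened if you want the argument to stand on its own.
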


\begin{proof}
It follows easily from the definition of a collapse from a free arc that
the inclusion $Y_{k+1}\hookrightarrow Y_k$ induces an isomorphism $G(Y_{k+1})\rightarrow G(Y_k)$
and an isometry of the associated $G$-trees with $G=G(Y_{k+1})=G(Y_k)$,
which implies the first two claims of the Lemma by induction.

The last two claims follow from the fact that the Rips machine eventually remove
any inessential branch of any leaf, see~\cite{bf95}.
\end{proof}

Thus, it would suffice to prove the theorem for any of $Y_k$, $k=0,1,2,\ldots$, chosen from
an arbitrary sequence produced from $Y$ by the Rips machine. If there is an assertion
that holds for all but finitely many $Y_k$ we may assume from the beginning that it holds
for $Y$.

\begin{definition}\label{reduced.u.o.b}
Let $L$ be a singular leaf of a union of bands $Y$. Denote by $L^0$ the union of degenerate bands
in $L$, by $L^-$ the union of right vertical edges of $Y$ contained in $L$,
by $L^+$ the union of left ones, and by $L^*$ the closure of $L\setminus\sing(L)$.
We call the leaf $L$ \emph{reduced} if the following conditions hold:
\begin{enumerate}
\item
$H_1(L,\sing(L))=0$;
\item
if $p$ is a terminal point of $L^-$ or $L^+$, or a point of $L^-\cap L^+$, then $p$ belongs to
an unbounded component of $L^*$;
\end{enumerate}
A union of bands is \emph{reduced} if every singular leaf of $Y$ is reduced.
\end{definition}

\begin{lemma}
A model $Y$ for $(X,\omega)$ and
a Rips sequence $Y_0=Y\supset Y_1\supset Y_2\supset\ldots$ can
be chosen so that
all but finitely many $Y_k$'s are reduced.
\end{lemma}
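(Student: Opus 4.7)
The plan is to treat the two conditions in Definition~\ref{reduced.u.o.b} separately. First I would dispose of condition~(2) by appealing to the Rips machine, then I would arrange condition~(1) by a finite preprocessing step (cuts along vertical arcs, which only replace $Y$ by another model of $(X,\omega)$) and show that it is preserved by collapses from free arcs.

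For condition~(2), observe that a bounded component of $L^*$ that meets $L^-\cup L^+$ at a terminal point, or contains a point of $L^-\cap L^+$, sits inside an inessential branch of the leaf~$L$ (it has finite diameter in the leaf metric $d_L$). By the argument of Proposition~\ref{elem1} and the main property of the Rips machine recalled there, any such bounded piece is removed from $Y_k$ after finitely many collapses: once a free arc appears inside it, the machine will collapse a portion of it, and iterating one exhausts the whole bounded component. Since the union of bands $Y$ has only finitely many singular leaves (there are only finitely many singularities in a finite $2$-complex, and each lies on a single leaf), condition~(2) holds for every singular leaf of $Y_k$ once $k$ is large enough.

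For condition~(1), I would first reduce the rank of $H_1(L,\sing(L))$ by initial cutting. If for some singular leaf $L$ this group is nonzero, then there is a simple arc $\alpha\subset L$ with endpoints in $\sing(L)$ and interior in $L\setminus\sing(L)$ whose relative class is nontrivial. Choose a point $p$ in the interior of $\alpha$ lying in the interior of some band $B$, and cut $Y$ along the vertical arc of~$B$ through~$p$; as noted in Section~\ref{cuttingsubsection} the result is again a model for $(X,\omega)$, and the cut decreases the rank of $H_1(L,\sing(L))$ for that leaf by one without increasing it for any other singular leaf (annulus-freeness guarantees that no new relations appear). Because the total rank $\sum_L\dim H_1(L,\sing(L))$ is a nonnegative integer and strictly decreases with each cut, finitely many such cuts produce a model in which condition~(1) holds for every singular leaf.

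It remains to see that condition~(1) is preserved by collapses from free arcs. A collapse from a free arc $J$ removes an open rectangle $(c,d)\times[0,1]$ from a single band $B$, which amounts to excising an open arc (or a disjoint union of open arcs) from each leaf that meets the rectangle. Excising open arcs from a $1$-complex can only decrease the first Betti number; in particular, for each singular leaf $L$ of $Y_k$ the corresponding singular leaf $L'$ of $Y_{k+1}$ satisfies $\dim H_1(L',\sing(L'))\leqslant\dim H_1(L,\sing(L))$. So once condition~(1) holds, it continues to hold along the whole Rips sequence. Combined with the previous paragraph, this gives the lemma; the only place I expect real work is the last bookkeeping step verifying that a collapse genuinely cannot create a new relative $1$-cycle through $\sing(L)$, which boils down to tracking how the excised open arcs sit inside the ambient leaf.
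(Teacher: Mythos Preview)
Your treatment of condition~(1) is essentially the same as the paper's and is fine. The real gap is in your handling of condition~(2).

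You assert that a bounded component of $L^*$ touching a terminal point of $L^{\pm}$, or containing a point of $L^+\cap L^-$, lies in an inessential branch of $L$ and is therefore eaten by the Rips machine. That is not justified: such a bounded component can connect two distinct singularities and sit in $\core(L)$, in which case the Rips machine never removes it. The paper deals with this case not via the Rips machine but by an explicit \emph{cut} along the offending bounded component (and a split at any point of $L^+\cap L^-$ not lying in $L^*$).

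More seriously, you treat $L^+$, $L^-$, $L^*$ as static, but they change along the Rips sequence: a collapse from a free arc can turn a terminal edge of $L_k^*$ into an edge of $L_{k+1}^+$ or $L_{k+1}^-$. Hence new terminal points of $L_k^{\pm}$ are created at every step, and condition~(2) can fail at $Y_k$ even if it held at $Y$. The paper's proof tracks this evolution: it fixes a connected component $\Gamma$ of $L^\epsilon$, lets $\Gamma_k$ be its descendant in $Y_k$, and shows that (after choosing the Rips sequence so that the number of singularities in the solid part is \emph{minimized} and eventually constant) either $\Gamma_k$ is eventually contained in $\core(L')$ or $\Gamma_k$ is eventually empty. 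The delicate case is when $\Gamma$ meets $L^*$ in a single point and the ambient component $L'$ is one-ended; here the paper argues that a persisting $\Gamma_k$ would force an ``infinite regular spiral'' in a finite union of bands, which is impossible. None of this mechanism --- the specific choice of Rips sequence, the tracking of $\Gamma_k$, and the spiral obstruction --- appears in your sketch, and without it condition~(2) cannot be secured for all large $k$.
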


\begin{proof}
Pick an arbitrary model $Y$ for $(X,\omega)$. Let $L$ be a singular leaf
of $Y$. Suppose that $H_1(L,\sing(L))$ is non-trivial. Then there is an arc
$\alpha$ in $L^*$ such that $\alpha\cap\sing(L)=\partial\alpha\subset\sing(L)$.
Cutting $Y$ along $\alpha$ (see subsection~\ref{cuttingsubsection})
reduces the dimension of $H_1(L,\sing(L))$
by one. So, by performing such cuttings we may ensure that condition~(1) of Definition~\ref{reduced.u.o.b}
holds for $Y$.

A collapse from a free arc cannot introduce new generators to $H_1(L,\sing(L))$, so, if condition~(1)
holds for $Y$, it will hold for all members of any
Rips sequence $Y=Y_0\supset Y_1\supset Y_2\supset\ldots$.

Let again $L$ be a singular leaf of $Y$. If $L^*$ has a bounded connected component $\Gamma$
that contains a point from $L^+\cap L^-$ or a terminal point of $L^+$ or $L^-$ we cut
$Y$ along $\Gamma$. We repeat this until there is no such~$\Gamma$ in any singular leaf.
This does not yet ensure condition~(2) because there may be a point in $L^+\cap L^-$
or a terminal point of $L^+$ or $L^-$ that does not belong to $L^*$.
If some point in $L^+\cap L^-$ does not belong to $L^*$, we split $Y$ at this point. The remaining
issue with terminal points of $L^+$ and $L^-$ is resolved by running the Rips machine in
a certain way described below.

Let $Z$ be the solid part of $Y$, and let $Z=Z_0\supset Z_1\supset Z_2\supset\ldots$ be a sequence in which
every~$Z_k$, $k\geqslant1$, is obtained from $Z_{k-1}$ by a collapse from a free arc.
One can see that no $Z_k$ contains a degenerate band and that the number $s_k$ of singularities
of $\mathcal F_{dx}$ in $Z_k$ may only decrease when $k$ grows. Therefore,
this number stabilizes for sufficiently large $k$. We choose the sequence $(Z_k)$ so that
$\lim_{k\rightarrow\infty}s_k$ is as small as possible.  

There is a unique Rips sequence $Y=Y_0\supset Y_1\supset Y_2\supset\ldots$ such that
$Z_k$ is the solid part of $Y_k$ for all $k=0,1,2,\ldots$ Namely, if $Z_{k-1}\mapsto Z_k$
is a collapse from a free arc $J$ then $Y_k$ is obtained from $Y_{k-1}$ by collapses from all free arcs
of $Y_{k-1}$ contained in $J$ (the endpoints of degenerate bands of $Y_{k-1}$ may subdivide $J$ into several free arcs
of $Y_{k-1}$).

Let $L$ be a singular leaf of $Y$. Denote the intersection $L\cap Y_k$ by $L_k$
and $(L_k)^+$ by $L_k^+$ etc.
Let $\Gamma$ be a connected component of $L^\epsilon$, where $\epsilon\in\{+,-\}$,
and let $L'$ be the connected component of $\Gamma\cup L^*$ that contains $\Gamma$.
Denote also: $L_k'=(L_k^\epsilon\cup L_k^*)\cap L'$,
$\Gamma_k=L_k^\epsilon\cap L'$.

Due to the annulus-free assumption, $\Gamma$ and all components of $L^*$ are simply connected. By
construction, the connected components of $L^*$ that are contained in $L'$ are unbounded.
At least one such component must be present in $L'$ since otherwise regular leaves
in a small neighborhood of $\Gamma$ will be compact.

In transition from $L_{k-1}$ to $L_k$, only the following
changes may occur:
\begin{enumerate}
\item
some terminal edges are removed;
\item
a terminal edge of $L_{k-1}^*$ that shares an endpoint with
$L_{k-1}^+\setminus L_{k-1}^-$ (respectively, $L_{k-1}^-\setminus L_{k-1}^+$) becomes an edge of $L_k^+$
(respectively, $L_k^-$);
\item
some terminal edges of $L_{k-1}^*$ that share an endpoint with
$L_{k-1}^0\setminus(L_{k-1}^+\cup L_{k-1}^-)$ become edges of~$L_k^0$.
\end{enumerate}
This means, in particular, that $L_k'$ will contain at most one connected component of $L_k^\epsilon$,
which we denote by $\Gamma_k$. If $\Gamma$ has more than one intersection point with $L^*$,
then $\core(L_k')$ is non-empty and stays unchanged when $k$ increases, and the edges from $\overline{\Gamma_k\setminus\Gamma_{k-1}}$
belong to $\core(L_k')=\core(L')$. Any inessential branch of $L'$ will eventually be removed by the Rips machine.
$\Gamma\setminus\core(L_k')$ consists of finitely many of them,
so, we will have $\Gamma_k\subset\core(L')$ for all sufficiently large $k$.

Now suppose that $\Gamma\cap L^*$ is a single point $p$. 
If $L'$ has more than one topological end then $\Gamma$ is contained
in an inessential branch of $L'$ and $\Gamma_k\cap\core(L')$ will remain empty.
So, $\Gamma_k$ will be empty for sufficiently large $k$.

Suppose that $L'$ has exactly one topological end. Let $r$ be the unique ray in $L^*$
starting at $p$, and let $e_0,e_1,e_2,\ldots$ be the edges of $r$
numbered in the order they follow in $r$.

Let $k_0\in\mathbb Z$ be a number of steps after which the number of
singularities in $Z_k$ stabilizes, i.e.\ $s_{k_0}=\lim_{k\rightarrow\infty}s_k$.
We claim that $\Gamma_k$ will be empty for $k\geqslant k_0$.

Indeed, suppose otherwise. Change the sequence  $Z_0\supset Z_1\supset\ldots$
starting from $Z_{k_0+1}$ so that each collapse $Z_{k-1}\mapsto Z_{k}$ for $k>k_0$
removes an edge from $\Gamma_{k-1}$. If $\Gamma_k$ will eventually become empty,
the number of singularities in $Z_k$ will decrease, which contradicts to the original choice of the
sequence $(Z_k)$. Thus, $\Gamma_k$ is non-empty for all $k$, and for sufficiently large $k$,
$\Gamma_k$ is obtained from $\Gamma_{k-1}$ by removing an edge
and adding another edge from $r$.

If $j>0$ is the number of edges in $\Gamma_k$ that remains unchanged infinitely long, then, for $k$ large enough, $\Gamma_k$
will consist of the edges $e_{k-q},e_{k-q+1},\ldots,e_{k-q+j-1}$ with some $q$.
This means that $r$ forms an infinite ``regular spiral'' like the one shown in Fig.~\ref{infinitespiral},
which is impossible.
\begin{figure}[ht]
\centerline{\includegraphics{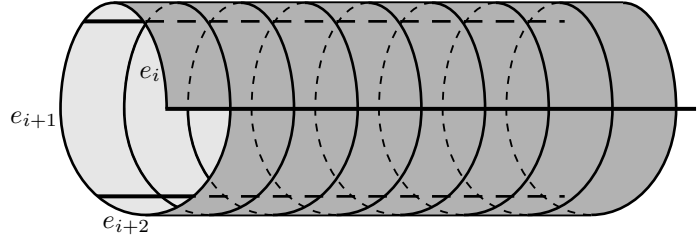}\put(-216,57){$e_i$}\put(-265,40){$e_{i+1}$}\put(-230,0){$e_{i+2}$}}
\caption{An infinite spiral cannot occur in a union of bands}\label{infinitespiral}
\end{figure}

Thus, for any singular leaf $L$ and any connected component $\Gamma$ of $L^\epsilon$ with $\epsilon\in\{+,-\}$,
for sufficiently large $k$, we will have $\Gamma_k\subset\core(L')$,
where the notation is as before. This finally ensures condition~(2) of Definition~\ref{reduced.u.o.b}
for $Y_k$.
\end{proof}

\begin{definition}
Let $Y$ be a reduced union of bands. A vertical edge $e$ of $Y$ is said to be \emph{free}
if $\overline{L\setminus e}$ is disconnected, where $L$ is the singular leaf containing $e$.
\end{definition}

\begin{lemma}
Let $Y=Y_0\supset Y_1\supset Y_2\supset\ldots$ is a sequence of reduced unions
of bands produced by the Rips machine. Then $Y_k$ has a free edge for any $k>0$.
\end{lemma}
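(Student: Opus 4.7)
The plan is to produce the required free edge as the vertical edge created by the Rips step $Y_{k-1}\to Y_k$ itself. Since $Y_k$ sits in a Rips sequence of reduced unions of bands of thin type, this step is a collapse from a free arc $J=(c',d')$ of $Y_{k-1}$ covered by, say, the $0$-base of some band $B=[a,b]\times[0,1]$ with $g_0(c)=c'$ and $g_0(d)=d'$. In the generic subcase $a<c<d<b$ two new vertical edges appear in $Y_k$, namely $e:=\{d\}\times[0,1]$ and $\{c\}\times[0,1]$ on the new bands $[d,b]\times[0,1]$ and $[a,c]\times[0,1]$. I claim $e$ is free; the boundary subcases $c=a$ or $d=b$ are to be handled separately and, as one checks, are in fact restricted by reducedness.

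I would first identify the singular leaf $L$ of $Y_k$ containing $e$ with the leaf $L''$ of $Y_{k-1}$ that passes through the same arc $\{d\}\times[0,1]$. By the freeness of $J$, no other base meets $J$, so $L''$ never enters the removed region $J\cup((c,d)\times[0,1])$; hence $L''\subset Y_k$ and its $dx=0$-connectedness survives in $Y_k$, giving $L=L''$ as sets. Then to show $\overline{L\setminus e}$ is disconnected, I note that inside $[d,b]\times[0,1]$ the only connection between the two endpoints $d'=(d,0)$ and $d''=(d,1)=g_1(d)$ of $e$ is $e$ itself, so removing $e$ severs this direct link, and it remains to exclude any alternative path in $L\setminus e$ from $d'$ to $d''$.

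Excluding such a path is the heart of the argument and uses both reducedness conditions together with the freeness of $J$. If an alternative path $\gamma$ existed, then $\gamma\cup e$ would be a $1$-cycle in $L$, and condition~(1) of reducedness, $H_1(L,\sing(L))=0$, forces its class to be represented by a cycle supported in $\sing(L)$, so $e$ would lie on a cycle in $\sing(L)$ made of vertical edges and singular $D$-vertices. Freeness of $J$ constrains $d'$ sharply: any band base touching $d'$ other than the $0$-base of $B$ must have $d'$ as a \emph{left} endpoint---a right endpoint or an interior occurrence at $d'$ would force an overlap with $J$---so the vertical edges of $\sing(L)$ incident to $d'$ and distinct from $e$ are precisely the left edges of bands starting at $d'$. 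Meanwhile condition~(2), applied at $d'$ as a terminal point of $L^+$, forces the regular arc $\{d\}\times[0,1]$ of the surviving band $[d,b]\times[0,1]$ to serve as the local presence at $d'$ of an unbounded component of $L^*$; this regular component sits outside $\sing(L)$ and therefore cannot carry the sought cycle. An analogous structural analysis at $d''$ then shows that none of the admissible left-edges at $d'$ can loop back to $d''$ through $\sing(L)$ without reusing $e$, producing the contradiction and proving $e$ to be a free edge.

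The principal obstacle is precisely this last case analysis. A priori $\sing(L)$ can contain many vertical edges glued at many singular $D$-vertices, and the impossibility of a closed loop through $e$ has to be teased out from the purely local constraints on the structures at $d'$ and $d''$ forced by the freeness of $J$, combined with the global information supplied by the two reducedness conditions.
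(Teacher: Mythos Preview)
Your proposal has two concrete problems and misses the simplification that makes the paper's argument work.

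\textbf{An outright error.} The claim that the leaf $L''$ of $Y_{k-1}$ through $\{d\}\times[0,1]$ never enters the removed region $J\cup((c,d)\times[0,1])$, hence $L''\subset Y_k$, is false. In a union of bands of thin type every leaf is dense, so $L''$ certainly meets the open strip $(c,d)\times(0,1)$: it can reach $g_1(x)$ for some $x\in(c,d)$ through other bands and then descend through $\{x\}\times[0,1]$. Thus the leaf of $Y_k$ through $e$ is the proper subset $L''\cap Y_k$, not $L''$ itself. This is not fatal to the strategy, but your identification $L=L''$ is wrong as written.

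\textbf{The unfinished core of the argument.} Your reduction via condition~(1) is correct: for a $1$-complex, $H_1(L,\sing(L))=0$ forces every simple cycle to lie in $\sing(L)$, so a hypothetical alternative path $\gamma$ would give a cycle $\gamma\cup e\subset\sing_{Y_k}(L)$. But then you must actually exclude such a cycle, and here you only sketch a local analysis at $d'$ and $d''$ and invoke condition~(2) without explaining how ``$d'$ lies in an unbounded component of $L^*$'' prevents a loop through $e$ inside $\sing_{Y_k}(L)$. You yourself flag this as ``the principal obstacle''; it is not carried out, and it is not clear that the purely local constraints you list suffice.

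\textbf{The paper's shortcut.} The paper avoids this case analysis entirely by working not at level $Y_{k-1}$ or $Y_k$ but at level $Y=Y_0$. There the new edge $e$ is \emph{not} a vertical edge: it is an interior vertical arc of some band of $Y$, hence $e\subset L^*$ and $e\not\subset\sing_Y(L)$. Now condition~(1) gives the disconnection for free: any simple cycle through $e$ would have to lie in $\sing_Y(L)$, which is impossible since $e$ does not. So $L\setminus e$ splits into at least two components. One of them contains $\sing_Y(L)$ (which persists in every $Y_k$), and reducedness gives that the $L^*$-component of $e$ is unbounded, so the other piece is unbounded and hence also meets $Y_k$. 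Thus $e$ is free in $Y_k$. The moral: pass to the \emph{original} $Y$, where the troublesome edge becomes regular, rather than analysing the singular graph of $Y_k$.
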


\begin{proof}
We can say even more, namely, that every vertical edge of $Y_k$ that
is not a vertical edge of $Y$ is a free edge. Indeed,
let $e$ be such an edge, $L$ the singular leaf of $Y$ that contains $e$.
Then $e\subset L^*$. Since $Y$ is reduced the connected component
of $L^*$ containing $e$ is unbounded and has a single intersection point
with $\sing(L)$. Therefore, $L\setminus e$ has at least two
connected components. Moreover, one of the connected components
contains $\sing(L)$ and another is unbounded, hence, both have
non-empty intersection with $Y_k$ for any $k$.

It remains to note that every $Y_k$ has more vertical edges than $Y_{k-1}$ does.
\end{proof}

Thus, we can add one more assumption on $Y$ that it has a free edge.

Let $(X,\omega)$ be a $2$-complex with a closed $1$-form obtained from $(Y,dx)$
by collapsing to a point every degenerate band and every vertical edge except one free edge,
whose image in $X$ will be denoted by $e$. Denote by $\theta$ the
corresponding map $Y\rightarrow X$. It is trivial to see that $Y$ is a model for $X$.

The oriented $\mathbb R$-tree associated with $X$ will be denoted by $(T,d)$,
the projection $\widetilde X\rightarrow T$ by $\pi$. The model projection $\theta$
can be lifted to a map $\widetilde\theta:\widetilde Y\rightarrow\widetilde X$ such that
$\theta\circ\proj_Y={\proj_X}\circ\widetilde\theta$. Let the orientation of~$T$ be presented
by an orientation map $\ori:T\rightarrow\mathbb R$.
The group $G(X)\cong G(Y)$ is denoted simply by $G$.

Now let $\Phi:(T,d,[\ori])\rightarrow(T,\lambda d,[\lambda\ori])$, $\lambda>1$ be an orientation preserving
isometry of oriented $G$-trees, $\varphi$ the corresponding automorphism of $G$.
We would like to represent them geometrically by a map $\widetilde X\rightarrow\widetilde X$,
but this may not be possible in general unless we add one more technical assumption
stated below.

\begin{lemma}
For some $n>0$ the map $\Phi^n:T\rightarrow T$ preserve every branching
direction of $T$.
\end{lemma}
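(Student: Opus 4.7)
The plan is to show that $\Phi$ acts as a permutation of a \emph{finite} set, namely the set of branching directions of $T$, so that some power of $\Phi$ acts trivially on this set and hence preserves every branching direction.

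First I would verify that $\Phi$ induces a well-defined map on branching directions. Given a branch point $p\in T$ and a branch $\beta$ at $p$, the pair $(\Phi(p),\Phi(\beta))$ is again a (branch point, branch) pair since $\Phi$ is a homeomorphism. The intertwining relation $\Phi(g\cdot q)=\varphi(g)\cdot\Phi(q)$ shows that if $(p',\beta')=(g\cdot p,g\cdot\beta)$ is in the same $G$-orbit as $(p,\beta)$, then $(\Phi(p'),\Phi(\beta'))=(\varphi(g)\cdot\Phi(p),\varphi(g)\cdot\Phi(\beta))$ is in the same $G$-orbit as $(\Phi(p),\Phi(\beta))$. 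Thus $\Phi$ descends to a well-defined map on branching directions, which is a bijection because $\Phi^{-1}$ defines its inverse.

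Next I would argue that the set of branching directions of $T$ is finite. Branch points of $T$ correspond to singular leaves of $\widetilde{\mathcal F}_\omega$ in $\widetilde X$, and two such leaves give the same $G$-orbit exactly when they project to the same singular leaf of $\mathcal F_\omega$ on $X$. Since $X$ is a finite $2$-complex, $\mathcal F_\omega$ has only finitely many singular leaves, so there are only finitely many $G$-orbits of branch points. At each branch point $p$, the branches at $p$ are in bijection with the connected components of the complement of the singularity set in the corresponding singular leaf (equivalently, with the local ``directions'' along which transverse leaves approach it); because each singular leaf of $\mathcal F_\omega$ contains only finitely many singularities and the local structure at each singularity is finite, this set of branches is finite, and so is its quotient by the (finite) stabilizer of $p$ in $G$. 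Putting the two finitenesses together shows that the number of branching directions of $T$ is finite.

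Once finiteness is established, the induced permutation of the set of branching directions has finite order, so there exists $n>0$ with $\Phi^n$ acting as the identity permutation. By definition, this means that for every branch point $p$ and every branch $\beta$ at $p$, the pair $(\Phi^n(p),\Phi^n(\beta))$ lies in the same $G$-orbit as $(p,\beta)$, i.e.\ $\Phi^n$ preserves every branching direction. The only nontrivial point in this plan is the finiteness of branching directions; everything else is a straightforward consequence of $\Phi$ being an orientation-preserving $\varphi$-equivariant isometry.
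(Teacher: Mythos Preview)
Your proposal is correct and follows essentially the same approach as the paper: establish that there are only finitely many branching directions (finitely many $G$-orbits of branch points coming from finitely many singular leaves of $X$, and finitely many branches at each), observe that $\Phi$ permutes this finite set, and conclude that some power acts trivially. The paper's proof is simply a more terse version of the same argument.
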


\begin{proof}
Branch points of $T$ are images of singular fibers of $\widetilde F_\omega$
under $\pi$, so, there are finitely many of them up to the action of $G$.
At every branch point there are only finitely many branches. Thus,
there are only finitely many branching directions in $T$, and $\Phi$
induces a permutation of them, some power of which will be trivial.
\end{proof}

Clearly, $\Phi^n$ is an orientation preserving isometry of $G$-trees
$(T,d,[\ori])\rightarrow(T,\lambda^nd,[\lambda^n\ori])$. Thus, if the
$G$-tree $T$ is self-similar, we may assume that the
self-similarity is realized by a homothety that preserves branching directions.

\subsection{Self-similarity of the first return correspondence}
Recall briefly the additional assumptions introduced above,
which are supposed to hold in the sequel:
\begin{enumerate}
\item
$Y$ is an annulus-free reduced union of bands of thin type;
\item
$Y$ has a free edge;
\item
$X$ is obtained from $Y$ by collapsing to a point every degenerate band and every edge except
one selected free edge $e$;
\item
there is a homothety $\Phi$ of the associated $G$-tree that fixes
branching directions.
\end{enumerate}

\begin{definition}
We say that a continuous map $\phigeom:X\rightarrow X$ is \emph{a geometric realization of
the homothety $\Phi$} if we have $\phigeom^*\omega=\lambda^{-1}\omega$, where
$\lambda$ is the shrink factor of $\Phi$, and $\phigeom$ can be lifted to
a map $\widetilde\phigeom:\widetilde X\rightarrow\widetilde X$ such that
$\pi\circ\widetilde\phigeom=\Phi\circ\pi$. The notation is summarized in
commutative diagram~\eqref{cd}.
\begin{equation}\label{cd}
\raisebox{5.5em}{%
\xymatrix{&G\times T\ar@{->}[r]^{\varphi\times\id}\ar@{->}[d]^{\text{action}}&G\times T\ar@{->}[d]^{\text{action}}\\
&T\ar@{->}[r]^\Phi&T\\
\widetilde Y\ar@{->}[d]^{\proj_Y}\ar@{->}[r]^{\widetilde\theta}
&\widetilde X\ar@{->}[u]_\pi\ar@{->}[r]^{\widetilde\phigeom}\ar@{->}[d]^{\proj_X}
&\widetilde X\ar@{->}[u]_\pi\ar@{->}[d]^{\proj_X}\\
Y\ar@{->}[r]^\theta&X\ar@{->}[r]^\phigeom&X}}\end{equation}
\end{definition}

\begin{proposition}\label{prop-geom-realization-exists}
Let $p$ be an interior point of $e$. Then there exists a geometric realization $\phigeom$ of $\Phi$
such that $\phigeom(p)=p$
and $\phigeom$ is injective on $\phigeom^{-1}(U)$, where $U$ is an open neighborhood of $p$.
\end{proposition}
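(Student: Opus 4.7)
The plan is to normalize $\Phi$ so that it literally fixes the branch point $\tau_L\in T$ corresponding to the singular leaf $L\ni e$, then to build $\widetilde\phigeom$ explicitly near a chosen lift $\widetilde p\in\widetilde L$ of~$p$, and finally to extend $\widetilde\phigeom$ to all of $\widetilde X$ by $\varphi$-equivariance and cell-by-cell matching before descending to $\phigeom$ on $X$.

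For the normalization, fix a lift $\widetilde p$ of~$p$, let $\widetilde e\ni\widetilde p$ be the lift of~$e$ containing it, let $\widetilde L$ be the leaf of $\widetilde{\mathcal F}_\omega$ through~$\widetilde p$, and set $\tau_L:=\pi(\widetilde L)$. The arc $\widetilde e$ cuts out a branching direction $G\cdot(\tau_L,\beta_e)$ at $\tau_L$, and hypothesis~(4) gives $\Phi(\tau_L,\beta_e)=g\cdot(\tau_L,\beta_e)$ for some $g\in G$; in particular $\Phi(\tau_L)=g\tau_L$. Replacing $\Phi$ by $g^{-1}\Phi$ and the associated automorphism $\varphi$ by the inner conjugate $h\mapsto g^{-1}\varphi(h)g$ produces an orientation-preserving homothety with the same shrink factor, still preserving branching directions, which now fixes $\tau_L$ (uniquely, by the strict contraction property). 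Since $\widetilde e$ is a vertical edge of $\widetilde X$ with a single adjacent $2$-cell, near $\widetilde p$ one has coordinates $(x,y)$ with $\omega=dx$, $\widetilde p=(0,0)$, $\widetilde e=\{x=0\}$, and the $2$-cell lying in $\{x\le 0\}$. Set
\[
\widetilde\phigeom(x,y):=(\lambda^{-1}x,\,f(y))
\]
on the half-disc $\widetilde V=\{x\le 0,\,|y|<\varepsilon\}$, where $f$ is any orientation-preserving self-homeomorphism of $\{0\}\times(-\varepsilon,\varepsilon)$ fixing~$0$. This is an embedding, fixes $\widetilde p$, satisfies $\widetilde\phigeom^*\omega=\lambda^{-1}\omega$, and realizes $\Phi$ on the portion of $T$ recording the leaves through~$\widetilde V$.

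For the global extension, each $2$-cell of $\widetilde X$ arises from a rectangular band carrying a horizontal coordinate dual to~$\omega$, and on any such cell $\widetilde\phigeom$ is forced to take the form $(x,y)\mapsto(\lambda^{-1}x+c,\,\psi(y))$, with $c$ prescribed by the action of~$\Phi$ on $T$ and the monotonic $\psi$ used to match already-defined values on the $1$-skeleton. Extending cell by cell on a fundamental domain of $G$ acting on $\widetilde X$, starting from $\widetilde V$, and propagating via $\varphi$-equivariance, yields the global $\widetilde\phigeom$. The main obstacle, and the only delicate point, is to verify that the required matching of $\psi$-values on $1$-cells is always achievable; here one uses crucially that $\Phi$ preserves branching directions, so that the cyclic combinatorial pattern of $2$-cells meeting each singular leaf is preserved by $\Phi$, and a consistent monotonic $\psi$ exists on every $2$-cell. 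Equivariance descends $\widetilde\phigeom$ to $\phigeom\colon X\to X$; by construction $\phigeom(p)=p$; and the injectivity of $\widetilde\phigeom$ on $\widetilde V$ gives the injectivity of $\phigeom$ on $\phigeom^{-1}(U)$ for a neighborhood~$U$ of~$p$ small enough to lift evenly to~$\widetilde V$.
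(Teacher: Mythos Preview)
Your construction has a genuine gap at the ``global extension'' step. You assert that on any $2$-cell of $\widetilde X$ the map $\widetilde\phigeom$ is \emph{forced} to take the form $(x,y)\mapsto(\lambda^{-1}x+c,\psi(y))$, i.e.\ to send the cell into a single target cell. This is not true: the conditions $\widetilde\phigeom^*\omega=\lambda^{-1}\omega$ and $\pi\circ\widetilde\phigeom=\Phi\circ\pi$ only say that horizontal arcs go to transversal arcs of $\lambda^{-1}$ times the length, projecting to the $\Phi$-image in~$T$. There is no reason such a transversal arc should lie in one $2$-cell, and in general it will not. The paper confronts exactly this difficulty: it first defines $\phigeom$ on singularities (identity on essential ones, since $\Phi$ preserves branching directions), then on each $1$-cell $\sigma$ it must \emph{produce} a transversal arc in $\widetilde X$ with the prescribed endpoints and projecting to $\Phi(\pi(\widetilde\sigma))$. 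This is the content of the lemma showing that for every $q\in\eta=\Phi(\pi(\widetilde\sigma))$ there is a local transversal $\xi_q$ with $\pi(\xi_q)\subset\eta$, using the preservation of branching directions at branch points of~$T$; one then patches these into a stair-step arc and straightens it. Only after the $1$-skeleton is done does one extend over $2$-cells, by sending each vertical arc to the shortest path in the target leaf---an image that typically crosses several cells. Your phrase ``the cyclic combinatorial pattern of $2$-cells meeting each singular leaf is preserved'' gestures at the right hypothesis but does not supply this construction.

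Your injectivity argument is also incomplete. Injectivity of $\widetilde\phigeom$ on the half-disc $\widetilde V$ gives nothing about $\phigeom^{-1}(U)$, which a~priori could meet $X$ far from $\proj_X(\widetilde V)$; you would need to know $\phigeom^{-1}(U)\subset\proj_X(\widetilde V)$, and you have not shown that. The paper uses a different, intrinsic argument: since $e$ is a free vertical edge, no \emph{open} transversal arc passes through~$p$; but $\phigeom$ sends open transversals to open transversals, so any $p'\ne p$ in the leaf through~$p$ (which lies off~$e$ and hence admits an open transversal) cannot map to~$p$. Together with $\phigeom|_e=\id$ this gives $\phigeom^{-1}(p)=\{p\}$, and the way $\phigeom$ was extended over $2$-cells then yields injectivity on a full preimage of a neighborhood.
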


\begin{proof}
For any leaf $L$ of $\mathcal F_\omega$ the set $\proj_X(\pi^{-1}(\Phi(\pi(\proj_X^{-1}(L)))))$
is also a leaf of $\mathcal F_\omega$, which must become $\phigeom(L)$. For the moment
we denote this leaf by $\Psi(L)$.

By construction, all singularities of $\mathcal F_\omega$ except $e$ are isolated points.
Due to the assumption that $Y$ is reduced every singular leaf of $\mathcal F_\omega$ contains
at most one singularity.

\begin{definition}
A singularity $a$ of $\mathcal F_\omega$ is called \emph{essential} if
$\pi(\proj_X^{-1}(a))$ consists of branch points of $T$.
\end{definition}

We assumed that $\Phi$ preserves branching directions, which implies, in particular,
that it preserves all $G$-orbits of branch points. Therefore, if a leaf $L$
of $\mathcal F_\omega$ contains an essential singularity, then $\Psi(L)=L$.

So, we start by defining $\phigeom$ on all essential singularities of $\mathcal F_\omega$ as identity
and on any other singularity $a$ arbitrarily so as to
have $\phigeom(a)\in\Psi(L)$, where $L$ is the leaf containing $a$.
The map $\phigeom$ defined so far only on singularities is uniquely lifted to a map $\widetilde\phigeom$
defined on the singular set of $\widetilde{\mathcal F}_\omega$ so that
the right column of~\eqref{cd} restricted to this singular set is commutative.

Note that $e$ is an essential singularity, so, $\phigeom$ is identical on $e$.

Now we claim that there is no obstruction to extend $\phigeom$ continuously to the whole
$1$-skeleton of $X$, which consists of the edge $e$ and the image of
the support multi-interval $D$ under the model projection~$\theta$, so as
to keep the right column of~\eqref{cd} restricted to the $1$-skeleton commutative.

Let $\sigma$ be a $1$-cell of $X$ distinct from $e$. Denote by $\widetilde\sigma$
a preimage of $\sigma$ in $\widetilde X$, the endpoints of $\sigma$ by $P$ and $Q$,
and their images under $\widetilde\phigeom$ by $P'$ and $Q'$, respectively
($P$ and $Q$ must be singular points, so the image under
$\widetilde\phigeom$ is already defined for them). Denote also $\Phi(\pi(\widetilde\sigma))$
by $\eta$. We want to define $\widetilde\phigeom$
on $\widetilde\sigma$ and then extend to $\proj_X^{-1}(\sigma)$ equivariantly.

To this end, it suffices to find a transversal arc $\widetilde\sigma'$ in $\widetilde X$
with endpoints $P'$, $Q'$ (we will automatically have $\pi(\widetilde\sigma')=\eta$).

\begin{lemma}\label{xi-q}
For any point $q\in\eta$ there exists a transversal arc $\xi_q$ in $\widetilde X$ such
that $\pi(\xi_q)\subset\eta$ and $\pi(\xi_q)$ covers a small open neighborhood of $q$ in $\eta$.
Moreover, if $q=\pi(P')$ (respectively, $q=\pi(Q')$), then $P'\in\xi_q$ (respectively, $Q'\in\xi_q$),
and $\pi(\xi_q)$ is disjoint from $\partial\eta$ if $q\notin\partial\eta$.
\end{lemma}

\begin{proof}
We use the following general fact. Let $\beta_1,\beta_2$ be two monotonic arcs coming from
a point $q\in T$ in the same direction, i.e. such that $\beta_1\setminus\{q\}$ and $\beta_2\setminus\{q\}$
are contained in the same branch of $T$ at $q$. Then the intersection $\beta_1\cap\beta_2$
is a nontrivial arc. Indeed, if there are $q_1,q_2$ such that
$q_1\in\beta_1\setminus\beta_2$ and $q_2\in\beta_2\setminus\beta_1$ (if $\beta_1\subset\beta_2$
or $\beta_2\subset\beta_1$ the claim is obvious),
then the geodesic arc connecting $q_1$ and $q_2$ must contain a single point $q'$ of $\beta_1\cap\beta_2$,
which must be distinct from $q$ as $q_1$ and $q_2$ lie in the same branch.
The arc between $q$ and $q'$ will be shared by $\beta_1$ and $\beta_2$.

Now let $q\in\eta$.
If $q$ is not a branch point of $T$, then $\pi^{-1}(q)$ is a regular leaf of $\widetilde{\mathcal F}_\omega$.
For an arbitrary open transversal arc $\xi$ intersecting this leaf, the image $\pi(\xi)$ will share
a nontrivial subarc $\eta'$ with $\eta$ of which $q$ is an internal point. We can take $\xi_q=\xi\cap\pi^{-1}(\eta')$.

Let $q\in\eta$ be a branch point of $T$. Denote by $R$ the unique point in $\widetilde\sigma$ such
that $\Phi(\pi(R))=q$. Since $\Phi$ preserves $G$-orbits of branch points there is an element $g\in G$
such that $\pi(g\cdot R)=q$. Denote $g\cdot R$ by $R'$.
In particular, if $R=P\text{ or }Q$, then $R'=P'\text{ or }Q'$, respectively.
Since $\Phi$ preserves branching directions, the image $\pi(\xi)$ of the arc $\xi=g\cdot\widetilde\sigma$ will
proceed from $q$ in the same direction(s) as $\eta$ does. So, we again
can take $\xi_q=\xi\cap\pi^{-1}(\eta\cap\pi(\xi))$.
\end{proof}

It follows from Lemma~\ref{xi-q} that one can find a finite collection of closed transversal
arcs $\xi_1,\xi_2,\ldots,\xi_r$ in~$\widetilde X$ such that:
\begin{enumerate}
\item their images $\pi(\xi_i)$ have disjoint interiors;
\item $\cup_{i=1}^r\pi(\xi_i)=\eta$;
\item $\pi(\xi_i)$ and $\pi(\xi_{i+1})$ share an endpoint that is not
a branch point of $T$ for $i=1,2,\ldots,r-1$;
\item $P'$ is an endpoint of $\xi_1$, and $Q'$ is an endpoint of $\xi_r$.
\end{enumerate}
Therefore, there exists a ``stair-step'' arc $\zeta=\xi_1\cup\alpha_1\cup\xi_2\cup\alpha_2\cup\ldots\cup\xi_r$
connecting $P'$ and $Q'$ such that each $\alpha_i$, $i=1,\ldots,r-1$, is a subset of a regular leaf.
The arc $\zeta$ can be disturbed in a small neighborhood of each $\alpha_i$ to become transversal
to $\widetilde F_\omega$.

Thus, we have shown how to define $\phigeom$ on the $1$-skeleton of $X$, and it remains only to extend it
to the whole $X$. The interior of each $2$-cell of $X$ is foliated by open arcs. Let $\alpha$ be
the closure of such an arc. $\phigeom$ is already defined on $\partial\alpha$, and both points from
$\phigeom(\partial\alpha)$ lie in the same leaf $L$ of $\mathcal F_\omega$. By construction,
$L$ is simply connected, so, we define $\phigeom$ on $\alpha$ so that
$\phigeom(\alpha)$ realizes the shortest path in the leaf between the endpoints $\phigeom(\partial\alpha)$.
This concludes the construction of the map $\phigeom$.

The restriction of $\phigeom$ to the edge $e$ is the identity map. Take any internal point of $e$ for $p$.
We claim, that $\phigeom(p')=p$ implies $p'=p$. Indeed, let $L$ be the leaf of $\mathcal F_\omega$
containing $p$, and let $p'$ be a point from $L\setminus e$. Then there is an open transversal arc $\sigma$
passing through $p'$, which must be taken to an open transversal arc by $\phigeom$. But there is
no open transversal arc passing through $p$, so $\phigeom(p')\ne p$.

The manner in which we defined $\phigeom$ on interiors of $2$-cells ensures that
all points from some open neighborhood of $p$ will also have a single preimage under $\phigeom$.\end{proof}

Now we come back to the union of bands $Y$, which is a model for $X$.
We do not distinguish between the free vertical edge $e\subset X$ and its
preimage in $Y$, using the same notation for both.

For a horizontal arc $\sigma\subset Y$ one of whose endpoints is $p\in e$ and
a real $\mu\in(0,1)$ denote by $\mu\sigma$ the horizontal subarc of $\sigma$
such that $p\in\mu\sigma$ and $|\mu\sigma|=\mu|\sigma|$.

\begin{proposition}\label{self-similarity-of-first-return}
There exists a horizontal arc $\sigma\subset Y$ such that $p\in\sigma$ and
the first return correspondences induced by $\omega$ on $\sigma$
and $\lambda^{-1}\sigma$ are similar.
\end{proposition}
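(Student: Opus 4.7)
The plan is to use the geometric realization $\phigeom$ from Proposition~\ref{prop-geom-realization-exists} as a bridge. Applying $\phigeom$ to a short horizontal arc $\sigma$ through $p$ produces a transversal arc through $p$ of length $\lambda^{-1}|\sigma|$; I will argue first that this image carries a first return correspondence similar to that on $\sigma$, and second that this correspondence agrees with the one on $\lambda^{-1}\sigma$.

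Pick $\sigma\subset Y$ to be a short horizontal arc with endpoint $p$, contained in the unique band of $Y$ attached to the free edge $e$. By continuity of $\phigeom$ at $p$ together with the injectivity clause of Proposition~\ref{prop-geom-realization-exists}, for $|\sigma|$ small enough the map $\phigeom$ is injective on the image $\theta(\sigma)\subset X$ and $\phigeom(\theta(\sigma))$ stays inside the image in $X$ of the same band. Since $\phigeom^*\omega=\lambda^{-1}\omega$, the restriction $\phigeom|_{\theta(\sigma)}$ is an orientation-preserving affine bijection onto $\phigeom(\theta(\sigma))$ of slope $\lambda^{-1}$ in $\omega$-parametrization. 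The first step is to show that this bijection transports the first return correspondence on $\theta(\sigma)$ to the one on $\phigeom(\theta(\sigma))$. For this I would lift to $\widetilde X$ and use the equivariance $\pi\circ\widetilde\phigeom=\Phi\circ\pi$ together with the $\varphi$-equivariance of $\widetilde\phigeom$; since $\Phi$ is an isometry, and in particular injective on $T$, the map $\widetilde\phigeom$ establishes a bijection between arcs in leaves of $\widetilde{\mathcal F}_\omega$ realizing the correspondence on a lift $\widetilde\sigma$ and those realizing it on $\widetilde\phigeom(\widetilde\sigma)$, with matching signs, while local injectivity ensures the ``first return'' clause (that such arcs meet $G$-translates of the transversal only at their endpoints) is preserved.

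The main obstacle is the second step: identifying the first return correspondence on $\phigeom(\theta(\sigma))$ with the one on $\lambda^{-1}\theta(\sigma)$, since these are a priori distinct transversal arcs through $p$ of common weight $\lambda^{-1}|\sigma|$. Because only a single band meets $e$ at $p$, the neighborhood of $p$ in $X$ has the local product structure of a half-disk, and at $\pi(\widetilde p)\in T$ there is a unique branching direction reached by transversal arcs through $p$ in that half-disk. Since $\Phi$ fixes $\pi(\widetilde p)$ modulo the $G$-action and preserves branching directions, the lift of $\phigeom(\theta(\sigma))$ starting at $\widetilde p$, which is an appropriate $G$-translate of $\widetilde\phigeom(\widetilde\sigma)$, projects via $\pi$ to the same initial subarc of $\pi(\widetilde\sigma)$ of length $\lambda^{-1}|\sigma|$ as does the lift of $\lambda^{-1}\theta(\sigma)$ starting at $\widetilde p$. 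Hence the two arcs $\phigeom(\theta(\sigma))$ and $\lambda^{-1}\theta(\sigma)$ are transversally isotopic inside the local product region with endpoints sliding along two fixed leaves, and such an isotopy leaves the first return correspondence invariant because it does not alter the set of leaves met by the arc. Combining the two steps yields the claimed similarity of the correspondences on $\sigma$ and $\lambda^{-1}\sigma$; passing between $Y$ and $X$ via $\theta$ introduces no issue since $\theta$ is injective away from finitely many leaves, which can be avoided by the choice of $\sigma$.
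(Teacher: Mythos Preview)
Your proposal is correct and follows the same strategy as the paper's proof. The paper streamlines the argument by observing that $\phigeom$ may be chosen so that $\phigeom(\theta(\sigma))\subset\theta(\sigma)$ outright (your half-disk isotopy of Step~2 is precisely what justifies such a choice), whence $\phigeom(\theta(\sigma))=\lambda^{-1}\theta(\sigma)$ and your two steps collapse into a single appeal to the fact that $\phigeom$ carries connected components of $L\setminus\eta$ to connected components of $\phigeom(L)\setminus\phigeom(\eta)$.
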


\begin{proof}
It follows from Proposition~\ref{prop-geom-realization-exists} that if $\sigma$ is short enough,
then $\phigeom$ can be chosen so that $\phigeom\eta\subset\eta$, where $\eta=\theta(\sigma)$ and
$\phigeom$ is injective in a small neighborhood of $\eta$.
Therefore, for any leaf $L$, $\phigeom$ takes connected components of $L\setminus\eta$
to connected components of $\phigeom(L)\setminus\phigeom(\eta)$.
It follows that $\mathcal F_\omega$ induces similar first return
correspondences on $\eta$ and $\phigeom(\eta)$ (see Subsection~\ref{first-return}
for definitions), hence $\mathcal F_{dx}$ induces similar
first return correspondences on $\sigma$ and $\theta^{-1}(\phigeom(\eta))$
It remains to notice that,
since $\lambda$ is the shrink factor of $\Phi$, we have $|\phigeom(\eta)|=\lambda^{-1}|\eta|$,
so, $\theta^{-1}(\phigeom(\eta))=\lambda^{-1}\sigma$.
\end{proof}

\subsection{Periodicity of the Rips machine}
We keep using notation and assumptions from the previous subsection.
In particular, $\sigma$ will denote a horizontal arc from Proposition~\ref{self-similarity-of-first-return}.

From now on we don't need the $2$-complex $X$ and the associated $\mathbb R$-tree anymore.
We will only use the self-similarity of the first return correspondence on $\sigma$
and the assumptions that $Y$ is a reduced union of bands.

\begin{lemma}
For any $\mu\in(0,1)$ all blocks of $Y_{\mu\sigma}$ are product ones.
\end{lemma}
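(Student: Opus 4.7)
By Proposition~\ref{essential->product}, the task reduces to showing that $\mu\sigma$ is essential for every $\mu\in(0,1)$. The plan has two parts: (i) transfer essentiality between $\sigma$ and the shrunken arcs $\lambda^{-n}\sigma$ using Proposition~\ref{self-similarity-of-first-return}, and (ii) prove that $\sigma$ itself is essential.

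For (i), I would iterate Proposition~\ref{self-similarity-of-first-return}. Applying $\phigeom^n$ as a geometric realization of $\Phi^n$ (the construction of Proposition~\ref{prop-geom-realization-exists} goes through verbatim for each power, with $p$ still fixed), the first return correspondences induced by $\omega$ on $\sigma$ and on $\lambda^{-n}\sigma$ are similar for every $n\geqslant 1$. Since similar correspondences produce isomorphic block decompositions, a block of $Y_\sigma$ is a product block if and only if the corresponding block of $Y_{\lambda^{-n}\sigma}$ is a product block; combining this with Proposition~\ref{essential->product} and its contrapositive (read off from the proof), essentiality of $\sigma$ is equivalent to essentiality of $\lambda^{-n}\sigma$. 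Given $\mu\in(0,1)$, pick $n$ so large that $\lambda^{-n}<\mu$. Then $\lambda^{-n}\sigma\subset\mu\sigma$, and essentiality is monotone under inclusion, so essentiality of $\sigma$ implies essentiality of $\mu\sigma$.

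For (ii), I would argue by contradiction: suppose $\sigma$ is inessential, so $\sigma\cap Y_k$ is finite for all large~$k$. Since the free edge $e$ lies in $\mathcal{C}(Y)=\bigcap_k Y_k$ (it is a vertical edge and is never removed by collapses from free arcs), the point $p$ belongs to every $Y_k$ and is fixed by every $\phigeom^n$. Because $Y$ is of thin type, leaves of $\mathcal{F}_{dx}$ are dense; coupled with reducedness and the presence of a free edge, this forces at least one additional point $q\in\sigma\cap\mathcal{C}(Y)\setminus\{p\}$ to exist. I would then use the self-similarity to produce an infinite sequence of distinct core points of $\sigma$ accumulating at $p$: lift $q$ to $\widetilde X$, apply iterates of $\Phi^{-1}$ (which acts as an isometry of the associated $\mathbb R$-tree and preserves branching directions), project back via the geometric realization using the local injectivity of $\phigeom$ at $p$ guaranteed by Proposition~\ref{prop-geom-realization-exists}, and verify that the resulting points lie in $\sigma\cap\mathcal{C}(Y)\subset\sigma\cap Y_k$. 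This contradicts the assumed finiteness.

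The main obstacle is the last step: producing the auxiliary point $q$ and rigorously tracking its $\phigeom^{-1}$-orbit, since $\phigeom$ is not a homeomorphism of $X$ and its inverse is only well-defined as a correspondence. The cleanest way around this is to perform the orbit argument on the covering $\widetilde X$, where $\Phi^{-1}$ acts as an honest isometry of the $G$-tree $T$, and only then project the resulting points of $\widetilde{\mathcal{C}(Y)}\cap\pi^{-1}(\Phi^{-n}(\pi(\tilde q)))$ back down to $\sigma\subset Y$ using local injectivity of $\phigeom$ near~$p$.
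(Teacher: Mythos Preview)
Your approach is far more elaborate than necessary, and it contains a genuine gap. The paper's proof is a single observation: the free edge $e$ is a vertical edge of $Y$, and a collapse from a free arc never removes a vertical edge, so $e$ remains a vertical edge of every $Y_k$. Hence the band of $Y_k$ having $e$ as a side has positive width, and since $\mu\sigma$ starts at $p\in e$ and runs horizontally into that band, the intersection $\mu\sigma\cap Y_k$ contains a nontrivial interval for every $k$. Thus $\mu\sigma$ is essential, and Proposition~\ref{essential->product} finishes the proof. No self-similarity, no auxiliary point $q$, no orbit argument is needed.

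The gap in your argument is in part~(i). You claim that because the first return correspondences on $\sigma$ and $\lambda^{-n}\sigma$ are similar, the block decompositions $Y_\sigma$ and $Y_{\lambda^{-n}\sigma}$ are isomorphic, and then that essentiality of $\sigma$ is \emph{equivalent} to essentiality of $\lambda^{-n}\sigma$ via Proposition~\ref{essential->product} ``and its contrapositive.'' But the contrapositive of ``essential $\Rightarrow$ all blocks product'' is ``some block not product $\Rightarrow$ inessential,'' which does not give you the converse ``all blocks product $\Rightarrow$ essential'' that you would need. So even granting the block-decomposition isomorphism (which itself is not immediate---similarity of first return correspondences constrains the combinatorics of how arms are bound to the arc, not the full block structure of the same $Y$ relative to two different arcs), you cannot conclude that $\sigma$ essential forces $\lambda^{-n}\sigma$ essential. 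Monotonicity under inclusion only goes the other way. Part~(ii) is also underspecified: the existence of the auxiliary point $q$ is asserted but not justified, and the projection of the $\Phi^{-1}$-orbit back to $\sigma$ is delicate. All of this is bypassed by the paper's direct observation about~$e$.
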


\begin{proof}
Since $e$ remains a vertical edge of $Y_k$ for any Rips sequence $(Y_i)$ the
arc $\mu\sigma$ is essential. It remains to apply Proposition~\ref{essential->product}.
\end{proof}

Whatever the sequence $(Y_i)$ is, at some moment a portion of $\sigma$ will be removed,
and the remaining part will contain a horizontal arc starting at $p$
that is \emph{maximal} in the sense that the other its endpoint lies at the opposite
vertical edge of the same band. On this, smaller portion of $\sigma$ the
first return correspondence is still self-similar. So, we may assume from the beginning
that $\sigma$ is maximal.

So far we had a large freedom in choosing the Rips sequence $(Y_i)$. Now we associate
a concrete sequence with the horizontal arc $\sigma$ on which we established self-similarity
of the first return correspondence.
For $t\geqslant0$ let $Y(t)$ be the band complex obtained from $Y$ by doing (recursively) all possible collapses
from a free arc that leave the interior of $e^{-t}\sigma$ untouched. When $t$ grows
$Y(t)$ changes countably many times. Let $t_1=0$ and let $0<t_2<t_3<\ldots$ be all the moments $t$
at which the changes in $Y(t)$ occur.

Denote: $Y_k=Y(t_k)$, $\sigma_k=e^{-t_k}\sigma$. By construction $\sigma_k$ is a maximal
horizontal arc of $Y_k$ for all $k$.

\begin{lemma}\label{t-periodicity}
There is an integer $a>0$ such that we have $t_{k+a}=t_k+\log\lambda$ for all $k>1$.
\end{lemma}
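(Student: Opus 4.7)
My plan is to show that the ``state'' of the Rips machine at logarithmic time $t$, appropriately encoded, is periodic in $t$ with period $\log\lambda$ up to a horizontal rescaling by $\lambda^{-1}$; the integer $a$ will then count the change-events of the state that occur within one period.

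For each $t\geqslant 0$ the transversal arc $\sigma_t := e^{-t}\sigma$ meets the persistent free edge $e$, hence is essential in $Y(t)$, so Proposition~\ref{essential->product} ensures that every block of the decomposition $(Y(t))_{\sigma_t}$ is a product block. I would define the state at time $t$ to be $\mathcal{S}(t) := \bigl((Y(t))_{\sigma_t},\ \text{its attachment to }\sigma_t,\ |\sigma_t|\bigr)$, seen as an enhanced union of bands together with the current length of the transversal, and first show that $\mathcal{S}(t_k)$ is a complete invariant for the subsequent Rips dynamics: the next change time $t_{k+1}$ and the transition $\mathcal{S}(t_k)\to\mathcal{S}(t_{k+1})$ depend only on $\mathcal{S}(t_k)$. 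The mechanism is that each change at $t>t_k$ is triggered precisely when the shrinking boundary of $\sigma_t$ first exposes a free arc of some band in $Y(t)$, an event whose time and combinatorial effect can be read off from the widths and binding pattern recorded in $\mathcal{S}(t_k)$.

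By Proposition~\ref{self-similarity-of-first-return} the first return correspondences induced on $\sigma$ and $\lambda^{-1}\sigma$ are similar via the orientation-preserving affine bijection $\Lambda:\sigma\to\lambda^{-1}\sigma$ fixing $p$; the restriction of $\Lambda$ to $\sigma_t$ conjugates the first return correspondence on $\sigma_t$ (which is inherited from $Y$ since the collapses producing $Y(t)$ from $Y$ leave the interior of $\sigma_t$ untouched) to that on $\sigma_{t+\log\lambda}$. Because all blocks of $(Y(t))_{\sigma_t}$ are product blocks, the enhanced union of bands $(Y(t))_{\sigma_t}$ together with its attachment to $\sigma_t$ is reconstructible from the first return correspondence on $\sigma_t$ together with the lengths of its long bands. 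Consequently $\mathcal{S}(t+\log\lambda)$ is isomorphic to $\mathcal{S}(t)$ after rescaling all band widths and the transversal length by $\lambda^{-1}$. Combined with the invariance established above, this scaling relation forces $(t_k+\log\lambda)\mapsto(t_{k+1}+\log\lambda)$ to be a valid transition for the state $\mathcal{S}(t_k+\log\lambda)$, so the set $\{t_k\}_{k\geqslant 2}$ is invariant under the shift $t\mapsto t+\log\lambda$. Taking $a$ to be the number of indices $k\geqslant 2$ with $t_k\in[t_2,\,t_2+\log\lambda)$ yields $t_{k+a}=t_k+\log\lambda$ for all $k>1$.

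The hardest step will be verifying that $\mathcal{S}(t_k)$ really is a complete invariant of the future Rips dynamics: one must trace, as $t$ grows past $t_k$, how shrinking $\sigma_t$ creates a new free arc in $Y(t)$ at a moment entirely determined by the widths and binding pattern of the blocks, and check that the enhanced block decomposition produced by the subsequent collapse is again predictable from this data. The exclusion $k>1$ should be only a minor technicality: $t_1=0$ is the initial moment by convention rather than a genuine change event, so $\mathcal{S}(t_1)$ need not be the $\lambda^{-1}$-rescaling of any prior state, which would lie outside our setup.
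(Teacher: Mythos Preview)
Your plan shares the paper's core idea---the change moments are governed by the self-similar first return correspondence---but the extra machinery introduces a gap. The paper argues directly: $t>0$ is a change moment if and only if, for small $\varepsilon>0$, the $e^{-t+\varepsilon}\sigma$-decomposition of $Y$ itself (not of $Y(t-\varepsilon)$) has a block whose sole binding arc is $\overline{e^{-t+\varepsilon}\sigma\setminus e^{-t}\sigma}$. Whether a block has a single binding arc is read off from the first return correspondence alone, so Proposition~\ref{self-similarity-of-first-return} gives the $\log\lambda$-periodicity of $\{t_k\}_{k>1}$ at once. No running state is tracked.

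The step you flag as hardest (that $\mathcal S(t_k)$ determines the future) is actually harmless, since your state is over-specified. The real problem is the step you treat as routine: that $\mathcal S(t+\log\lambda)$ is the $\lambda^{-1}$-rescaling of $\mathcal S(t)$. This fails for two reasons. First, the internal combinatorics of a product block with $j\geqslant3$ bound arms is not determined by the correspondence---it can be any tree with $j$ leaves and no valence-$2$ vertices (precisely the ambiguity counted in Lemma~\ref{finitelymany}), and which one actually occurs is fixed by the singularity extensions of $Y$ toward $\sigma_t$, which have no reason to respect the similarity $\Lambda$. Second, you give no argument that the lengths of the long bands match under $\Lambda$; arms of blocks at time $t+\log\lambda$ are assembled from more bands of $Y$ than at time $t$, so their lengths are typically larger. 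The repair is simple: discard the enhanced structure and the internal block shapes and observe that the next change time depends only on the binding pattern, i.e.\ on the correspondence on $\sigma_t$, which \emph{is} self-similar (you already note it is inherited unchanged from $Y$). With that coarsening---or by adopting the paper's one-line characterization of change moments---your remaining steps (shift-invariance of $\{t_k\}_{k>1}$, choice of $a$, exclusion of $k=1$) go through.
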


\begin{proof}
By construction $Y(t)$ changes when $t$ passes $t_k$ with $k>1$. This happens
if and only if, for small enough $\varepsilon>0$, the $e^{-t_k+\varepsilon}$-decomposition of $Y(t_k-\varepsilon)$
contains a block consisting from a single long band having a single attaching arc,
and this arc is $\eta=\overline{(e^{-t_k+\varepsilon}\sigma)\setminus(e^{-t_k}\sigma)}$.
This is equivalent to saying that the $e^{-t_k+\varepsilon}$-decomposition of $Y$
contains a block having $\eta$ as the only attaching arc.

The latter fact is detected solely by the first return correspondence induced on $\sigma$,
which is self-similar as stated in Proposition~\ref{self-similarity-of-first-return}.
Therefore, we have $t\in\{t_k\}_{k>1}$ if and only if $t>0$ and $(t+\log\lambda)\in\{t_k\}_{k>1}$.
The claim now follows.
\end{proof}

Following general principles, we may assume that the assertion of Lemma~\ref{t-periodicity}
extends to $k=1$, too.

\begin{lemma}\label{numofblocks}
The number of blocks in $\sigma_k$-decomposition of $Y_k$ does not depend on $k$.
\end{lemma}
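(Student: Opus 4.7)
My plan is to establish the lemma by combining a \emph{periodicity} statement $b_{k+a}=b_k$ (where $b_k:=\#\{\text{blocks of }(Y_k)_{\sigma_k}\}$) with a \emph{local} analysis of the individual transitions $k\mapsto k+1$.

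For the periodicity, I will use Lemma~\ref{t-periodicity} to obtain $\sigma_{k+a}=\lambda^{-1}\sigma_k$. The self-similarity map $\sigma\to\lambda^{-1}\sigma$ supplied by Proposition~\ref{self-similarity-of-first-return} fixes the endpoint $p$ and is affine in the $\sigma$-coordinate, so it takes the sub-arc $\sigma_k\subset\sigma$ to $\lambda^{-1}\sigma_k=\sigma_{k+a}\subset\lambda^{-1}\sigma$ and restricts to a similarity between the first return correspondences induced on these sub-arcs. Because collapses from free arcs do not alter the first return correspondence on the surviving portion of a horizontal arc, the same similarity identifies the correspondences induced by $Y_k$ on $\sigma_k$ and by $Y_{k+a}$ on $\sigma_{k+a}$. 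By Proposition~\ref{essential->product}, every block in both decompositions is a product block, and for product decompositions the number of blocks is a combinatorial invariant of the first return correspondence. Hence $b_k=b_{k+a}$.

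For the local analysis, I will use the characterization in the proof of Lemma~\ref{t-periodicity}: the transition $Y_k\to Y_{k+1}$ consists of collapsing a single block $\mathcal{B}$, itself a single long band whose only binding arc $\eta$ is contained in $\overline{\sigma_k\setminus\sigma_{k+1}}$. I will check that the combined effect of shortening the subdivision arc and collapsing $\mathcal{B}$ preserves the block count: passing from $(Y_k)_{\sigma_k}$ to the intermediate decomposition $(Y_k)_{\sigma_{k+1}}$ merges $\mathcal{B}$ with the neighbouring block formerly bound at $\eta$ (since $\eta$ is no longer part of the subdivision arc), while the Rips collapse removes the long band $\mathcal{B}$, attached to that neighbour only via $\eta$, without further disconnection. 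The matching gain comes from the new vertical cut at the far endpoint of $\sigma_{k+1}$, which separates off one new block adjacent to the former position of $\mathcal{B}$. Since the characterization of $t_{k+1}$ as the \emph{first} moment after $t_k$ at which $Y(t)$ changes rules out any other combinatorial event in $\overline{\sigma_k\setminus\sigma_{k+1}}$, this is the only change, so $b_{k+1}=b_k$.

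The main obstacle lies in the local analysis: one has to verify that the new cut at the far endpoint of $\sigma_{k+1}$ creates exactly one new block, neither zero nor several, which requires controlling the position of the endpoint relative to surrounding bands and the behaviour of singularity extensions toward the shrinking arc. A cleaner alternative is to establish monotonicity $b_{k+1}\leqslant b_k$ by a direct argument (shrinking the subdivision arc can only reconnect components, and collapsing a single-long-band block attached via a single binding arc cannot disconnect its merged neighbour); combined with the periodicity $b_k=b_{k+a}$, the inequalities $b_k\geqslant b_{k+1}\geqslant\cdots\geqslant b_{k+a}=b_k$ force equality throughout and give the desired constancy.
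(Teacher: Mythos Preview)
Your periodicity step rests on the claim that, for product block decompositions, the number of blocks is a combinatorial invariant of the first return correspondence. This is false: the correspondence determines only the set of \emph{maximal families} of equivalence classes (compare the proof of Lemma~\ref{finitelymany}), and a decomposition must have at least one block per family but may well have more. Concretely, if $Y$ carries an inessential singularity---one whose image in the associated tree $T$ is not a branch point---then the singularity extension toward $\sigma$ slices a single block into two adjacent blocks of the same combinatorial type, which still constitute one maximal family of the correspondence while contributing two to the block count. The hypothesis that $Y$ is reduced does not rule out inessential singularities; efficiency (equivalently, all singularities essential) is introduced in the paper only \emph{after} the present lemma, precisely because it is not yet available. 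Without it the equality $b_{k+a}=b_k$ is unjustified, and your chain $b_k\geqslant b_{k+1}\geqslant\cdots\geqslant b_{k+a}=b_k$ cannot be closed. The monotonicity step is also not established as written: when $\sigma_k$ shrinks to $\sigma_{k+1}$, singularity extensions that formerly terminated on $\overline{\sigma_k\setminus\sigma_{k+1}}$ may now continue and land on $\sigma_{k+1}$, producing genuinely new cuts; the assertion that ``shrinking the subdivision arc can only reconnect components'' requires real work.

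The paper sidesteps both difficulties with a direct count. It first observes that collapses from free arcs only strip free arms from blocks, so $(Y_k)_{\sigma_k}$ and $Y_{\sigma_k}$ have the same block count; this reduces the problem to the fixed complex $Y$. Using that $Y$ is reduced (so each $\sing(L)$ is connected), it then shows that the number of blocks of $Y_{\sigma_k}$ adjacent to a given $\sing(L)$ equals $v(L)+|\sigma_k\cap L|$, where $v(L)$ is the number of components of $U\setminus L$ for a small neighbourhood $U$ of $\sing(L)$. Since each block is adjacent to exactly two singularities and $\sum_L|\sigma_k\cap L|=2$ (the two endpoints of the maximal arc $\sigma_k$), the total is $1+\tfrac12\sum_L v(L)$, which is visibly independent of~$k$.
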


\begin{proof}
First, note that the $\sigma_k$-decompositions of $Y$ and of $Y_k$ have the same number of blocks.
Moreover, there is a natural correspondence between the blocks of the two decompositions such that
the blocks of $(Y_k)_{\sigma_k}$ are obtained from respective blocks of $Y_{\sigma_k}$
be removing some number of free arms and have the same attaching arcs.
So, we can replace $Y_k$ by $Y$ in the statement.

Let $L$ be a singular leaf. Since $Y$ is reduced, $\sing(L)$ is connected.
Let $U$ be a small open neighborhood of $\sing(L)$ that retracts to $\sing(L)$.
Denote by $v(L)$ the number of connected components of $U\setminus L$.
(One can show that $v(L)$ is exactly the number of branches at any
point from the $G$-orbit corresponding to $L$ in the associated $G$-tree.)

It is easy to see that the number of blocks of $Y_{\sigma_k}$ to which $\sing(L)$ is adjacent
equals $v(L)+d$, where~$d$ is the number of points in $\sigma_k\cap L$ (thus, $d\in\{0,1,2\}$).
Each block of $Y_{\sigma_k}$ is adjacent to exactly two singularities of $\mathcal F_{dx}$.
Therefore, the number of blocks is equal to
$$1+\frac12\sum_{\sing(L)\ne\varnothing}v(L).$$
\end{proof}

Now we introduce our final assumption on $Y$. We assume that $Y$ is \emph{efficient}
in the following sense: if $Y'$ is another union of bands with a maximal horizontal
arc $\sigma'$ such that the respective first return correspondences on $\sigma$ and $\sigma'$
are similar, then $Y'_{\sigma'}$ has at least as many blocks as $Y_{\sigma}$ does.
Proposition~\ref{obviousprop} implies that replacing $Y$ by an efficient union of bands
inducing the same first return correspondence on~$\sigma$ leaves $\mathcal C(Y)\cap\sigma$ unchanged.

The structure of the whole set $\mathcal C(Y)$ is not necessarily unchanged,
but we are interested only in its Hausdorff dimension, which is higher by one
than that of $\mathcal C(Y)\cap\sigma$. So, it is indeed safe to switch from~$Y$
to an efficient union of bands inducing the same first return correspondence.

\begin{remark}
One can show that efficiency simply means that all singularities of $\mathcal F_{dx}$ are essential.
\end{remark}

\begin{lemma}\label{finitelymany}
There are only finitely  many, up to isomorphism, non-degenerate unions of bands $Z$ such that
\begin{enumerate}
\item
$Z$ can be the solid part of an efficient union of bands inducing, on a maximal
horizontal arc $\xi$, a first return correspondence
similar to the one that $Y$ induces on $\sigma$;
\item
each block of $Z_\xi$ is either a single long band or has no free arms.
\end{enumerate}
\end{lemma}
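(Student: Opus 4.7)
The plan is to translate the two conditions into a finite combinatorial classification, exploiting the fact that the first return correspondence $\sim_\sigma$ carries only finitely many combinatorial invariants. First, by multiplying all widths in $Z$ by a common positive constant (an isomorphism-preserving normalization), I would assume $|\xi|=|\sigma|$ and that the correspondence induced by $Z$ on $\xi$ coincides with $\sim_\sigma$. Once this normalization is in place, the partition of $\xi$ into binding arcs of blocks of $Z_\xi$ is forced by $\sim_\sigma$ alone: endpoints of binding arcs occur only at the finitely many distinguished points of $\xi$ that are singular for $\sim_\sigma$ (points at which equivalence classes start or end, or points reached by singularity extensions). So there are only finitely many possibilities for which subintervals of $\xi$ serve as binding arcs, and for which binding arcs are grouped together as belonging to a common block (two binding arcs lie on the same block exactly when they are linked by a chain of $\sim_\sigma$-relations internal to that block).

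Next I would bound the number of blocks uniformly in $Z$. By the Remark, efficiency is equivalent to every singularity of $\mathcal F_{dx}$ in $Z$ being essential; combined with the counting identity
\[
\#\{\text{blocks of }Z_\xi\} = 1 + \tfrac12\sum_{\sing(L)\neq\varnothing} v(L)
\]
from the proof of Lemma~\ref{numofblocks}, and with the observation that each $v(L)$ equals the number of branches of the associated $G$-tree at the branch point corresponding to $L$ (hence is read off from the correspondence $\sim_\sigma$), this sum is bounded in terms of $\sim_\sigma$ alone.

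The main step is then to bound the internal complexity of each block $\mathcal B$. If $\mathcal B$ is a single long band, only its two binding arcs and its width matter, and both are determined by $\sim_\sigma$. If $\mathcal B$ has no free arms, let $n$ be its number of long bands and $s$ the number of its binding arcs. The $n$ long bands carry $2n$ vertical edges, and the $4n$ endpoints of these edges must land either at one of the $2s$ boundary points of binding arcs on $\xi$ or at an internal singular point of $\mathcal F_{dx}$ inside $\mathcal B$. Essentiality forces each such internal singular point to have valence at least $3$; the annulus-free hypothesis and the absence of free arms rule out loose ends and leaf-circles and keep the incidence pattern rigid. An Euler-characteristic-style count on $\mathcal B$ (using $\exc(\mathcal B)=0$ and the incidence inequalities) then yields $n\leqslant C\,s$ for a uniform constant~$C$. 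The widths of the bands and their gluings are then rigidly constrained by the requirement that $\mathcal B$ reproduces the restriction of $\sim_\sigma$ to its binding arcs, leaving only finitely many isomorphism types of $\mathcal B$.

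The hard part will be the last step, namely making the count of long bands inside a block with no free arms truly uniform. Everything else is bookkeeping built on top of the rigidity of $\sim_\sigma$, but the bound $n\leqslant Cs$ requires using all three hypotheses (essentiality, annulus-freeness, and absence of free arms) simultaneously and writing down an honest Euler/valence inequality; this is where efficiency does the essential work of pinning down the block.
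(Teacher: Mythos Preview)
Your overall strategy is sound, but you are working much harder than necessary and there is a concrete error in your ``hard part''. The observation you are missing is that every block of $Z_\xi$ is automatically a \emph{product} block: since the correspondence on $\xi$ is similar to $\sim_\sigma$ and $\sigma$ is essential, $\xi$ is essential too, and Proposition~\ref{essential->product} applies. A product block $\mathcal B$ with no free arms is then just a trivial fibration $\Gamma\times I$; the fiber $\Gamma$ is a finite tree because annulus-freeness makes regular leaves simply connected. The arms of $\mathcal B$ are exactly the valence-$1$ vertices of $\Gamma$, and passing to long bands suppresses the valence-$2$ vertices. So for a block with $j\geqslant2$ binding arcs one is simply counting trees with exactly $j$ leaves and no valence-$2$ vertices; such a tree has at most $2j-2$ vertices, hence there are finitely many. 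That is the entire ``hard part''. The paper phrases it as ``the number of finite graphs having exactly $j$ vertices of valence one and no $2$-valent vertices'', and reads the blocks directly off the finitely many maximal one-parameter families of $\sim_\sigma$-equivalence classes, efficiency meaning exactly one block per family.

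Your claimed input $\exc(\mathcal B)=0$ is wrong for the blocks that actually occur here: for a product block each binding arc has weight $-\exc(\mathcal B)$ (see the proof of Proposition~\ref{essential->product}), so $\exc(\mathcal B)=-w<0$, where $w$ is the common width of its binding arcs. The identity $\exc=0$ is precisely the hallmark of a \emph{non}-product block. This slip is a symptom of not having identified the product structure; without it your Euler count is not correctly set up, and the closing assertion that ``the widths of the bands and their gluings are then rigidly constrained'' is unjustified---it is the product structure that forces every long band in the block to share the width $w$ and reduces the gluing data to the combinatorial type of $\Gamma$. The detour through the block-counting formula of Lemma~\ref{numofblocks} is likewise unnecessary once you observe that an efficient $Z$ has exactly one block per maximal family of $\sim_\sigma$.
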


\begin{proof}
Let $\sim$ be the first return correspondence induced by $Y$ on $\sigma$
and $C$ the set of equivalence classes with respect to $\sim$,
each of which is a subset of $\sigma\times\{+,-\}$.
There are finitely many pairwise disjoint maximal families of the form
$\{\{(x_1+t,\epsilon_1),\ldots,(x_j+t,\epsilon_j)\}\}_{t\in I}$ in $C$, where $I$ is a non-trivial interval (open,
closed, or semi-open). To get the block decomposition of a
union of bands inducing $\sim$ we need at least one block for each such family.
The union of bands will be efficient if each family is presented by exactly one
block, for which there is no obstruction.

If we additionally require that each block has no free arms unless it is a single long band,
then there will be only finitely many choices for each block. Namely,
if $j$ is the number of attaching arcs of the block and $j\geqslant2$, then the number of choices
is equal to the number of finite graphs (viewed up to isomorphism) having exactly $j$ vertices of valence one
and no $2$-valent vertices. If $j=1$ then the block must be a single long band.

When the blocks are chosen there are only finitely many ways
to assemble the solid part $Z$ of a union of bands $Y'$ from them so that
$Y'$ induces the desired first return correspondence.
\end{proof}

For a union of bands $Z$ we denote by $\lambda Z$ the union of bands
obtained from $Z$ by rescaling: $(x,y)\mapsto(\lambda x,y)$.

\begin{proposition}
Under assumptions made above there are integers $a,b>0$ such that
the solid parts of~$Y_k$ and $\lambda^bY_{k+a}$ are isomorphic.
\end{proposition}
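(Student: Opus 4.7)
The plan is to combine Lemma~\ref{t-periodicity} (time-periodicity of the Rips machine steps) with the finiteness statement of Lemma~\ref{finitelymany} through a pigeonhole argument applied to a sequence of rescaled solid parts of the $Y_k$. The integer $b$ will come from the number of pigeonhole periods needed to get a repetition, and the new $a$ from multiplying the $a$ of Lemma~\ref{t-periodicity} by that same number.

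First I would unpack the scaling. By Lemma~\ref{t-periodicity} we have $t_{k+a}=t_k+\log\lambda$, hence $\sigma_{k+a}=\lambda^{-1}\sigma_k$. Consequently $\lambda Y_{k+a}$ has $\sigma_k$ as its distinguished maximal horizontal arc, of the same length as the one in $Y_k$. Iterating, for every $m\geqslant 0$ the union of bands $\lambda^{m}Y_{1+ma}$ has $\sigma_1$ as its maximal horizontal arc, of length $|\sigma_1|$. Proposition~\ref{self-similarity-of-first-return}, applied $m$ times, then shows that $\lambda^{m}Y_{1+ma}$ induces on $\sigma_1$ a first return correspondence similar to the one induced by $Y_1$.

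Next I would put the solid part $Z_m$ of $\lambda^{m}Y_{1+ma}$ (possibly after replacing it by an efficient representative inducing the same first return correspondence on $\sigma_1$, as allowed by the discussion just preceding Lemma~\ref{finitelymany}) into the finite list provided by that lemma. Condition~(1) of Lemma~\ref{finitelymany} is automatic from the previous paragraph. Condition~(2)---that each block of the $\sigma_1$-decomposition of $Z_m$ is either a single long band or has no free arms---is the delicate point: a non-single-band block carrying a free arm would produce a free arc of $Y_{1+ma}$ lying outside the interior of $\sigma_{1+ma}$, contradicting the construction of $Y(t)$, which by definition exhausts all collapses from free arcs that leave $e^{-t}\sigma$ untouched. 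This is the main obstacle I anticipate, and handling it rigorously is where most of the work sits.

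Once those two conditions are established, Lemma~\ref{finitelymany} forces the sequence $(Z_m)_{m\geqslant 0}$ to take only finitely many isomorphism classes, so by the pigeonhole principle there exist $i<j$ with $Z_i\cong Z_j$. Since passage to the solid part commutes with rescaling, this reads as an isomorphism between $\lambda^{i}$ times the solid part of $Y_{1+ia}$ and $\lambda^{j}$ times the solid part of $Y_{1+ja}$; cancelling the common factor $\lambda^{i}$ yields an isomorphism between the solid part of $Y_{1+ia}$ and that of $\lambda^{j-i}Y_{1+ia+(j-i)a}$. Setting $k=1+ia$, replacing the original $a$ by $(j-i)a$, and taking $b=j-i$ then produces the claimed isomorphism. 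The remainder of the argument beyond verifying condition~(2) is bookkeeping.
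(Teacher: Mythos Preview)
Your approach is essentially the paper's: combine the time-periodicity of Lemma~\ref{t-periodicity}, the finiteness of Lemma~\ref{finitelymany}, the block fact (your condition~(2)), and pigeonhole. The paper's one-line proof cites exactly these ingredients together with Lemma~\ref{numofblocks}, and it is this last lemma that you are missing.

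The issue is your parenthetical ``possibly after replacing it by an efficient representative.'' If you actually performed such a replacement for each $m$, the pigeonhole would only match two \emph{replacements}, not two genuine solid parts of the $Y_{1+ma}$, and the Proposition speaks about the latter. The discussion before Lemma~\ref{finitelymany} licenses a single replacement of $Y$ itself (because only $\mathcal C(Y)\cap\sigma$ matters), not a separate replacement at every stage of the Rips sequence. What you need instead is that $Y_{1+ma}$ is \emph{already} efficient, so that its solid part lands directly in the finite list of Lemma~\ref{finitelymany}. This is exactly what Lemma~\ref{numofblocks} supplies: since the number of blocks of $(Y_k)_{\sigma_k}$ is independent of $k$ and $Y$ was assumed efficient (minimal block count), each $Y_k$ is efficient as well. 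Once you insert this observation in place of the replacement, your argument is complete and matches the paper's.

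Your sketch for condition~(2) is at the same level of detail as the paper (which simply asserts the fact), and your reasoning---that a non-single-band block with a free arm would yield a free arc disjoint from the interior of $\sigma_k$, contradicting the exhaustive definition of $Y(t)$---is the correct idea.
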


\begin{proof}
This follows from Lemmas~\ref{t-periodicity}, \ref{numofblocks}, \ref{finitelymany} and
the fact that, for any $k$, every block of the $\sigma_k$-decomposition of $Y_k$ is either a single
long band or has no free arm.
\end{proof}

\subsection{Finalizing the proof of Theorem~\ref{matintheorem1}}\label{section-example}
Note that removing degenerate bands from $Y$ changes $\mathcal C(Y)$ by
a $1$-dimensional subset, which may be ignored.

Thus, we are left to prove the assertion of the theorem in the case when the foliated
$2$-complex $X$ is a non-degenerate union of bands $Y$
such that some union of bands $Y'\subset Y$ obtained from $Y$ by a finite
sequence of collapses from a free arc is isomorphic to $\lambda^{-1}Y$
with some $\lambda>1$.
This is precisely the situation to which the argument of \cite{bf95,s1}
can be directly extended.

We may additionally assume that all bands of $Y$ are long ones and that none of
them is contained by whole in $Y'$. Assign some
lengths $\ell_1,\ldots,\ell_m$ to them, thus turning $Y$ into an enhanced
union of bands. Then $Y'$ will also become enhanced.
Let $\ell_1',\ldots,\ell_m'$ be the lengths of the corresponding
long bands in $Y'$, where the correspondence is defined
by the isomorphism between $Y$ and $\lambda Y'$. We will have
$(\ell_1',\ldots,\ell_m')=(\ell_1,\ldots,\ell_m)A$ with some
non-zero matrix $A$ having only non-negative integer entries.

Let $\mu>1$ be the Perron--Frobenius eigenvalue of $A$.

\begin{lemma}
We have $\mu<\lambda$.
\end{lemma}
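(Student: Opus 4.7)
The plan is to exhibit the vector $w=(w_1,\ldots,w_m)$ of widths of the long bands of $Y$ as an approximate eigenvector of $A$ satisfying $Aw<\lambda w$ componentwise, and then invoke a Collatz--Wielandt type spectral estimate to conclude $\mu<\lambda$.

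I would begin by reinterpreting the matrix entry $A_{ij}$ geometrically. Unpacking the definition of the enhancement together with the assumption that all bands of $Y$ are long, one sees that $A_{ij}$ equals the number of bands of $Y'$ which, via the inclusion $Y'\subset Y$, lie inside the long band $B_i$ of $Y$ and simultaneously inside the long band $B_j'$ of $Y'$. Each band of $Y'$ is contained in a unique such $B_i$, since bands of $Y$ have pairwise disjoint interiors.

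Next, the isomorphism $Y\cong\lambda Y'$ gives widths $w_j'=\lambda^{-1}w_j$ for the long bands of $Y'$. Moreover, all bands of $Y'$ contained in a single long band $B_j'$ share the common width $w_j'$: in any non-minimal representation of a long band, the constituent bands are stacked via auxiliary $D$-intervals obtained by subdivisions, and subdivisions preserve the band width. Thus the bands of $Y'$ inside $B_i$ occupy pairwise disjoint vertical strips of $B_i$, each of width equal to the width of its containing long band of $Y'$, and summing yields
\[
\sum_j A_{ij}\, w_j' \leqslant w_i \qquad \text{for every } i.
\]
By the hypothesis that no band of $Y$ is contained wholly in $Y'$, for every $i$ some strip of $B_i$ of positive transverse measure lies outside $Y'$, so the inequality above is \emph{strict} for every $i$. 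Substituting $w_j'=\lambda^{-1}w_j$ gives $(Aw)_i<\lambda w_i$ for all $i$.

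Finally, setting $c:=\max_i (Aw)_i/w_i$ we have $c<\lambda$ and $Aw\leqslant c\,w$ componentwise, whence by induction $A^k w\leqslant c^k w$ for all $k\geqslant 1$. Since $w$ is strictly positive, this forces $\mu=\rho(A)\leqslant c<\lambda$. The main subtle point is the width identification in the second step: one must verify that each band of $Y'$ contained in $B_j'$ really has width $w_j'$, so that the total width contribution inside $B_i$ can be written as $\sum_j A_{ij} w_j'$. This is a structural assertion about long bands---namely, that any band decomposition of a long band arises from a sequence of subdivisions---and requires a careful look at how the collapses propagate the inherited band decomposition of $Y'$.
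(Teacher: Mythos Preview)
Your argument is correct. The geometric core---that the bands of $Y'$ inside each $B_i$ are pairwise disjoint vertical sub-strips whose total width is strictly less than $w_i$, because by hypothesis no $B_i$ survives intact---is exactly what the paper uses as well. Where you diverge is in how this inequality is turned into a spectral bound. You read it as $Aw<\lambda w$ componentwise for the concrete width vector $w$ and invoke a Collatz--Wielandt estimate. The paper instead chooses the \emph{lengths} $(\ell_1,\dots,\ell_m)$ to be a left Perron--Frobenius eigenvector of $A$, so that $\ell A=\mu\ell$, and then computes the total area $\sum_i\ell_i w_i$: since the long bands of $Y'$ have widths $\lambda^{-1}w_j$ and lengths $\mu\ell_j$, one gets $\mathrm{area}(Y')=\mu\lambda^{-1}\,\mathrm{area}(Y)$, while the strict width loss forces $\mathrm{area}(Y')<\mathrm{area}(Y)$.

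These are dual packagings of the same estimate: the paper pairs the width vector with the Perron eigenvector on the length side, you bound the spectral radius directly from a positive test vector on the width side. Your route avoids invoking the existence of the Perron eigenvector (at the cost of the elementary Collatz--Wielandt iteration), while the paper's area formulation is a one-line computation once the eigenvector is in hand. Your caveat about verifying that every band of $Y'$ inside a given long band $B_j'$ has width exactly $w_j'$ is well placed; it follows because an isomorphism with a minimal-band representative preserves $dx$, so all constituent bands of a long band share its width.
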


\begin{proof}
Choose $(\ell_1,\ldots,\ell_m)$ to be the eigenvector of $A$ (for right multiplication)
corresponding to the eigenvalue $\mu$. We will have $\ell_i\geqslant0$,
$\ell_i'=\mu\ell_i$. To each band of $Y$ we define its \emph{area}
as the product of the length and the width (it may be zero for some
but not all bands), and the sum of areas of all band will be
called the area of $Y$. We will have $\mathrm{area}(Y')=\lambda^{-1}\mu\,\mathrm{area}(Y)$.
On the other hand, the area of $Y'$ is strictly smaller than that of $Y$,
which implies $\mu<\lambda$.
\end{proof}

Let $\psi:Y\rightarrow Y'$ be a map that realizes an isomorphism between $Y$ and $\lambda Y'$.
Define inductively $Y_1=Y'$, $Y_k=\psi(Y_{k-1})$, $k=2,3,\ldots$. The sequence
$Y\supset Y_1\supset Y_2\supset\ldots$ is produced by the Rips machine.
For any positive length assignments to the bands of $Y$ the lengths of the long bands of $Y_k$
will grow as~$\mu^k$. The widths of bands of $Y_k$ decrease as $\lambda^{-k}$.
Therefore, the Hausdorff dimension of $\mathcal C(Y)=\cap_kY_k$ is equal to
$$1+\frac{\log\mu}{\log\lambda}\in(1,2).$$
This implies that the union of leaves of $\mathcal F_{dx}$ that intersect $\mathcal C(Y)$
has zero measure. These are precisely those that have more than one topological end.
\section{An example of a union of bands with two-ended typical leaves}

\begin{theorem}
There exist uncountably many unions of bands $Y$ of thin type such that
the union of two-ended leaves has full measure in $Y$.
\end{theorem}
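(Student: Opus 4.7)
My plan is to construct an uncountable family of three-band complexes (as in Fig.~\ref{orbit}) by an inductive choice of Rips-machine itinerary together with band widths, arranged so that the limit set $\mathcal C(Y)=\bigcap_k Y_k$ has positive two-dimensional Lebesgue measure. This contrasts starkly with the self-similar situation of Theorem~\ref{matintheorem1}, where $\mathcal C(Y)$ has Hausdorff dimension strictly below $2$: here the construction will have the opposite feature, realized by ensuring that the total transverse measure of all collapsed free arcs is strictly less than $|Y|$, so that $|Y_k|$ converges to a strictly positive limit.

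First, I would fix the combinatorial skeleton of three bands $B_1,B_2,B_3$ attached to a support interval $D$ and enumerate the finite set of combinatorial types of single-step Rips moves (collapses from a free arc). An \emph{itinerary} is an infinite word $\omega=(\omega_1,\omega_2,\ldots)$ in this alphabet; the width vectors $w\in\mathbb R_+^3$ realizing the first $k$ letters of $\omega$ as genuine collapses form an open polyhedral cone $P_k(\omega)\subset\mathbb R_+^3$, and admissibility of $\omega$ means $\bigcap_k\overline{P_k(\omega)}\ne\varnothing$. I would fix a summable sequence $(c_k)$ with $\sum c_k<1$ and inductively select a non-eventually-periodic admissible $\omega$ in which every band index occurs infinitely often, together with width vectors $w^{(k)}\in P_k(\omega)$ converging to some $w^\infty$, such that at step $k$ the length $|J_k|$ of the collapsed free arc satisfies $|J_k|\leqslant c_k|Y|$. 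The positive dimension of each $P_k(\omega)$ (two degrees of freedom after normalization) gives enough slack to shorten $|J_k|$ at every stage. Consequently $|Y_\infty|=|Y|-\sum_k|J_k|>0$, and the same bound restricted to any horizontal transversal $\tau$ yields $\mathrm{Leb}(\mathcal C(Y)\cap\tau)>0$.

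Verifying thin type is the principal obstacle. One must establish annulus-freeness, infinite non-termination of the Rips machine, and density of every regular leaf. Non-termination is automatic from the infinitude of $\omega$; annulus-freeness can be arranged by a preliminary cutting as in the reduction procedures of \cite{bf95}; density of leaves follows from the prescription that every band index occurs infinitely often in $\omega$, which forces every horizontal arc to be infinitely subdivided by the collapse sequence. Once thin type is established, Proposition~\ref{essential->product} and the general theory show that the set of $2$-ended leaves coincides, up to a set of transverse measure zero, with the saturation of $\mathcal C(Y)$, which already has positive transverse measure by the construction; by additionally arranging that the first-return correspondence on $\tau$ is uniquely ergodic --- a generic property within the polytopes $P_k(\omega)$, by analogy with the Masur--Veech theorem for interval exchanges --- one upgrades positive measure to full measure, so the $2$-ended leaves win. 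Uncountability is immediate: at each stage of the induction one has more than one admissible combinatorial and continuous choice, giving a Cantor-like parameter set, and distinct parameter values produce pairwise non-isomorphic $Y$ since the first-return correspondence on a fixed transversal, an isomorphism invariant by Proposition~\ref{obviousprop}, records the itinerary $\omega$.
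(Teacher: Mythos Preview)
Your high-level strategy matches the paper's: construct a Rips sequence $Y=Y_0\supset Y_1\supset\ldots$ for which the total width $|Y_k|$ (equivalently, the area) stays bounded away from zero, so that $\mathcal C(Y)$ has positive transverse measure, and then invoke unique ergodicity to upgrade positive to full measure. But two of the load-bearing steps in your proposal are not arguments, and they are exactly where the real work lies.

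First, the sentence ``the positive dimension of each $P_k(\omega)$ gives enough slack to shorten $|J_k|$ at every stage'' is the crux, and it is not justified. Once a combinatorial itinerary is fixed, the width $|J_k|$ of the $k$-th free arc is a \emph{determined} positive linear functional of the initial width vector; you are not free to tune it independently. Requiring $|J_k|\leqslant c_k|Y|$ cuts $P_k(\omega)$ by a half-space, and you must show that the infinite nested intersection meets the \emph{open} cone, not just its boundary (boundary points correspond to degenerate bands and do not give thin type). A dimension count does not settle this: each new constraint could push you toward a fixed face, and the intersection could collapse. The paper resolves this not by abstract slack but by discovering a specific four-band pattern $Z(\overrightarrow w,\overrightarrow\ell)$ whose Rips transitions are encoded by explicit integer matrices $A(m,n)$, $B(m,n)$; a direct matrix computation then shows $S_{k+1}>(1-2/m_k)S_k$ provided $m_k=n_k$ grows at least geometrically, and the required interior width vector $\overrightarrow{w_0}$ exists because the projectivized maps $\eta_{B(m,n)}$ are uniform contractions on the positive simplex. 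In other words, the construction is the proof; without an explicit family and an explicit estimate you have only a plausibility argument.

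Second, your unique-ergodicity step (``a generic property\ldots by analogy with the Masur--Veech theorem'') is a gap. Masur--Veech concerns interval exchange transformations under Rauzy induction; there is no off-the-shelf analogue for systems of partial isometries under the Rips machine that you can simply cite. In the paper unique ergodicity is a \emph{consequence} of the same contraction estimate (Lemma~\ref{w-exist}): uniqueness of $\overrightarrow{w_0}$ up to scale is precisely the statement that the simplex of transverse invariant measures is a single ray. Your proposal separates the positive-measure construction from the ergodicity argument, but in practice they have to be proved together, from the same matrix analysis. Finally, your verification of thin type is too casual: ``every band index occurs infinitely often'' does not by itself imply minimality of the foliation; again the paper gets this for free from the explicit contraction, which forces all band widths to stay positive at every stage.
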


This theorem fill follow from Proposition~\ref{prop-two-end} below.

We use notation $\overrightarrow\ell$, $\overrightarrow{\ell'}$, $\overrightarrow{\ell_k}$, $\overrightarrow w$,
$\overrightarrow{w'}$ and $\overrightarrow{w_k}$ for
$$\begin{pmatrix}\ell_1&\ell_2&\ell_3\end{pmatrix},\
\begin{pmatrix}\ell_1'&\ell_2'&\ell_3'\end{pmatrix},\
\begin{pmatrix}\ell_{k1}&\ell_{k2}&\ell_{k3}\end{pmatrix},\
\begin{pmatrix}w_1\\w_2\\w_3\\w_4\\w_5\end{pmatrix},\
\begin{pmatrix}w_1'\\w_2'\\w_3'\\w_4'\\w_5'\end{pmatrix},\text{ and }
\begin{pmatrix}w_{1k}\\w_{2k}\\w_{3k}\\w_{4k}\\w_{5k}\end{pmatrix},$$
respectively. All the coordinates of these columns and rows will be positive reals.

Let $Z=Z(\overrightarrow w,\overrightarrow\ell)$ be an enhanced union of bands shown in Fig.~\ref{z(w,l)}. It consists of four
bands $B_1$, $B_2$, $B_3$, and $B_4$ having dimensions $w_1\times\ell_1$,
$(w_2+w_3)\times\ell_2$, $(w_1+w_2+w_3+w_4+w_5)\times\ell_3$,
and $(w_1+w_2+w_3+w_4+w_5)\times\ell_1$, respectively.

\begin{remark}
The band $B_4$ is actually not needed. If we identify its bases and remove the band, then
the intersection of $\mathcal C(Z)$ with other bands will not change. The obtained
union of bands will consist of just three bands and, in the terminology of \cite{glp94},
it will be the suspension of the following
system of partial isometries on the interval $D=[0,2w_1+2w_2+2w_3+w_4+w_5]$:
$$\begin{aligned}\relax
[0,w_1]&\leftrightarrow
[w_1+2w_2+2w_3+w_4+w_5,2w_1+2w_2+2w_3+w_4+w_5],\\
[w_1+w_2,w_1+2w_2+w_3]&\leftrightarrow
[w_1+w_2+w_3+w_4,w_1+2w_2+2w_3+w_4],\\
[0,w_1+w_2+w_3+w_4+w_5]&\leftrightarrow
[w_1+w_2+w_3,2w_1+2w_2+2w_3+w_4+w_5].
\end{aligned}$$
\end{remark}

\begin{figure}[ht]
\centerline{\includegraphics{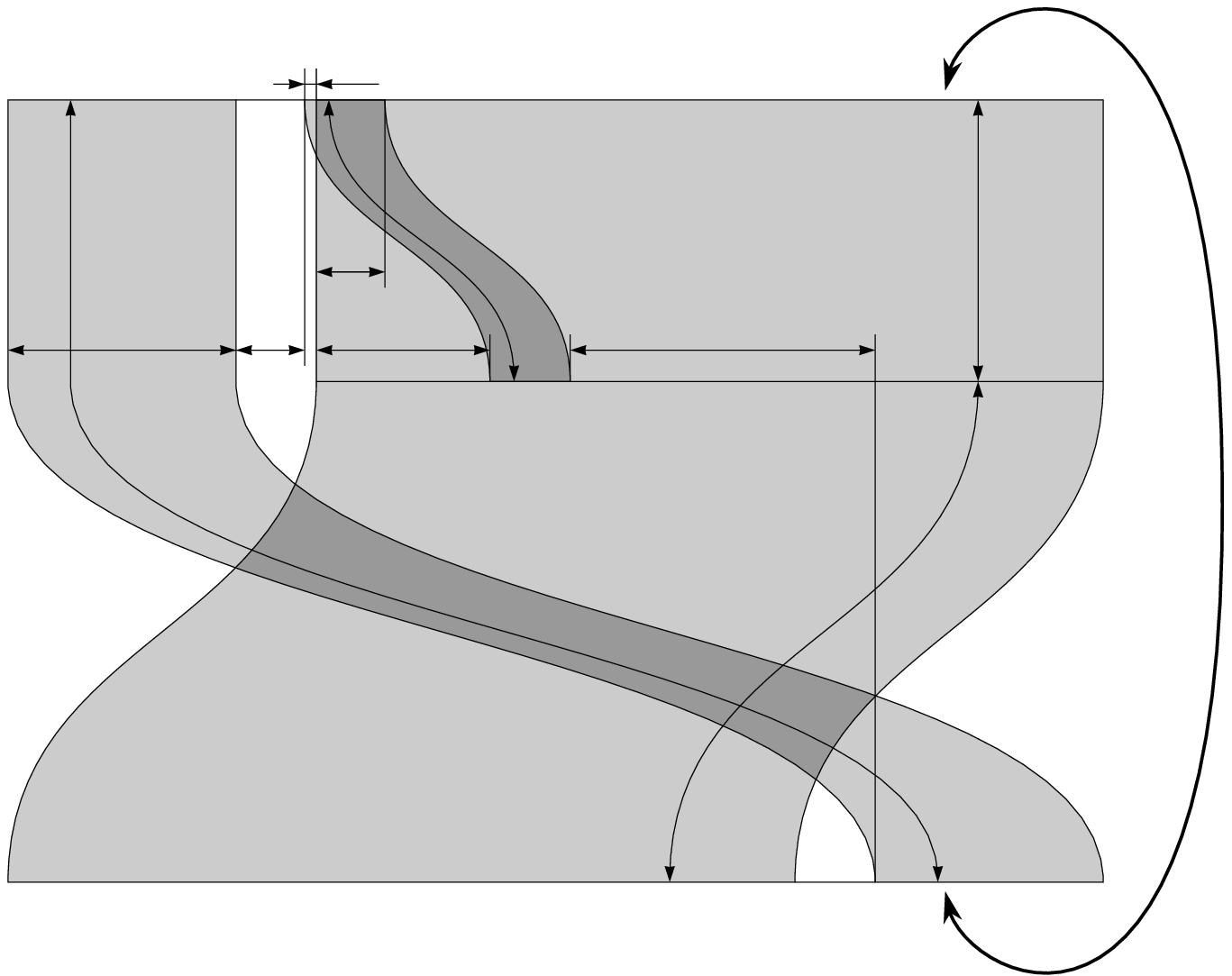}\put(-358,205){$\scriptstyle w_1$}\put(-312,205){$\scriptstyle w_2$}\put(-284,290){$\scriptstyle w_3$}\put(-268,205){$\scriptstyle w_4$}\put(-286,230){$\scriptstyle w_2$}\put(-168,205){$\scriptstyle w_5$}\put(-352,160){$\scriptstyle\ell_1$}\put(-235,215){$\scriptstyle\ell_2$}\put(-100,150){$\scriptstyle\ell_3$}\put(-90,230){$\scriptstyle\ell_1$}\put(-15,140){\begin{sideways}identify\end{sideways}}}
\caption{The union of bands $Z(\protect\overrightarrow w,\protect\overrightarrow\ell)$}\label{z(w,l)}
\end{figure}

Now we define:
$$\begin{aligned}
A(m,n)&=\begin{pmatrix}
m+3&m+3&(m+3)(n+1)-1\\
0&1&1\\
m+2&m+1&(m+2)(n+1)
\end{pmatrix},\\
B(m,n)&=\begin{pmatrix}
n+3&2n+5&2n+6&n+3&n+3\\
1&3&4&2&1\\
1&1&0&0&0\\
n+2&2n+4&2n+5&n+2&n+2\\
m(n+5)&m(2n+9)-1&2m(n+5)-1&m(n+5)&m(n+4)
\end{pmatrix}.\end{aligned}$$

Denote: $\mathbb R_+=(0,\infty)$.

\begin{lemma}\label{lem-Rips-step}
Let $\overrightarrow\ell,\overrightarrow{\ell'}\in(\mathbb R_+)^3$, $\overrightarrow w,\overrightarrow{w'}\in(\mathbb R_+)^5$ be related as follows:
$$\overrightarrow\ell A(m,n)=\overrightarrow{\ell'},\quad
\overrightarrow w=B(m,n)\overrightarrow{w'},$$
where $m,n$ are natural numbers.
Then the enhanced union of bands $Z(\overrightarrow{w'},\overrightarrow{\ell'})$ is isomorphic
to a one obtained from $Z(\overrightarrow w,\overrightarrow\ell)$ by several collapses from
a free arc.
\end{lemma}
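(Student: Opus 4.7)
My plan is to exhibit an explicit sequence of collapses from free arcs taking $Z(\overrightarrow w,\overrightarrow\ell)$ to a union of bands isomorphic to $Z(\overrightarrow{w'},\overrightarrow{\ell'})$, and then verify by direct bookkeeping that the cumulative effect on the width and length vectors is given by the matrices $B(m,n)$ and $A(m,n)$. The first step is to inspect Fig.~\ref{z(w,l)} to catalog free arcs. The portion of $D$ covered only by the base of the very wide band $B_3$ (and dually the base of $B_4$) contains several free sub-arcs, of widths $w_1$, $w_4$, $w_5$, adjacent to the narrower bands $B_1$ and $B_2$. Each such free arc permits an elementary collapse that strips a sliver from $B_3$, reduces its width by the corresponding amount, and leaves all lengths unchanged; meanwhile the long band containing the remainder of $B_3$ is forced to merge with whichever neighbour its base now overlaps, so that the length of the merged long band becomes the sum of the merged summands.

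The presence of the two independent integer parameters $m$ and $n$ strongly suggests that the required sequence is organized into two nested phases. I would identify an inner phase, iterated $m$ times, which repeatedly slides $B_1$ (the band with length $\ell_1$) across $B_3$ by collapsing a free arc of width $w_1$ from $B_3$, each iteration adding a copy of $\ell_1$ to the long band that used to be $B_3$. This accounts for the $m+2$ and $m+3$ type coefficients appearing in the first and third rows of $A(m,n)$. After $m$ such steps a new free arc opens up and one enters an outer phase, iterated $n$ times, which performs an analogous slide along the other direction; this is the source of the factor $(n+1)$ attached to the $\ell_3$-column of $A(m,n)$ and produces the mixed coefficient $(m+3)(n+1)-1$ in the $(1,3)$-entry. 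A final cleanup collapse, stripping a last free arc from $B_3$ or $B_4$, accounts for the small ``$-1$'' corrections visible both in $A(m,n)$ and in the last row of $B(m,n)$.

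The third step is to translate the record of these $O(m+n)$ collapses into the claimed matrix relations. At each collapse the width of a single band is decreased by a specific amount, and one length is replaced by a sum of previous lengths; writing these atomic rules as rank-one updates of the width column and the length row respectively, the composite matrices factor through products of elementary matrices whose telescoping yields exactly $B(m,n)$ and $A(m,n)$. I would verify this either by an induction on $m$ with $n$ fixed (which reduces to checking that one inner-phase step realises the matrix with $m$ replaced by $m+1$) followed by an induction on $n$, or by pattern-matching the explicit column sums of $B(m,n)$ against the total width $|Z|$, using that widths must satisfy $|Z(\overrightarrow w,\overrightarrow\ell)|=|Z(\overrightarrow{w'},\overrightarrow{\ell'})|+(\text{total width swept away by collapses})$ and that collapses leave the excess unchanged.

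The main obstacle, as usual in the Rips machine, is not in any single calculation but in the careful combinatorial bookkeeping: each collapse changes the attaching pattern of the bands to $D$, and one must maintain a consistent labeling of the five width-parameters and three length-parameters of the intermediate configurations so that the final configuration is visibly of the form $Z(\overrightarrow{w'},\overrightarrow{\ell'})$ rather than some cosmetic rearrangement of it. The isolated ``$-1$'' entries in $B(m,n)$ indicate that the collapse sequence cannot be viewed as a pure concatenation of identical blocks and that one endpoint effect has to be tracked separately; getting this boundary contribution right is where I expect the bulk of the care to be required.
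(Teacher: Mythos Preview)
Your approach is essentially the same as the paper's: both proceed by exhibiting an explicit sequence of collapses organized into two consecutive stages, one repeated $m$ times and one repeated $n$ times, and then asserting that the resulting enhanced union of bands is $Z(\overrightarrow{w'},\overrightarrow{\ell'})$ with the claimed matrix relations. The paper's proof is in fact nothing more than two figures (Figs.~\ref{first-stage} and~\ref{second-stage}) showing the intermediate and final configurations, with portions marked ``$\times m$'' and ``$\times n$'' to be repeated, together with the sentence ``it is a direct check''. So your plan to do the bookkeeping in words and elementary-matrix factorizations is if anything more detailed than what the paper provides.

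One small caution: your terminology ``inner/outer phase'' suggests nesting, whereas the paper's two stages are strictly sequential (the $m$-stage runs to completion, producing an intermediate union of bands of the same combinatorial type, and only then does the $n$-stage begin). Your narrative ``after $m$ such steps a new free arc opens up and one enters an outer phase'' is in fact sequential, so this is only a wording issue, but be careful not to let the inner/outer language mislead the bookkeeping. Also, some of your specific guesses about which free arc is collapsed at each step (e.g.\ ``sliding $B_1$ across $B_3$ by collapsing a free arc of width $w_1$'') are plausible but need to be checked against the actual attaching pattern in Fig.~\ref{z(w,l)}; the ``$-1$'' corrections you flagged do indeed arise as boundary effects at the transition between stages, exactly as you anticipated.
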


\begin{proof}
\begin{figure}[ht]
\centerline{\includegraphics{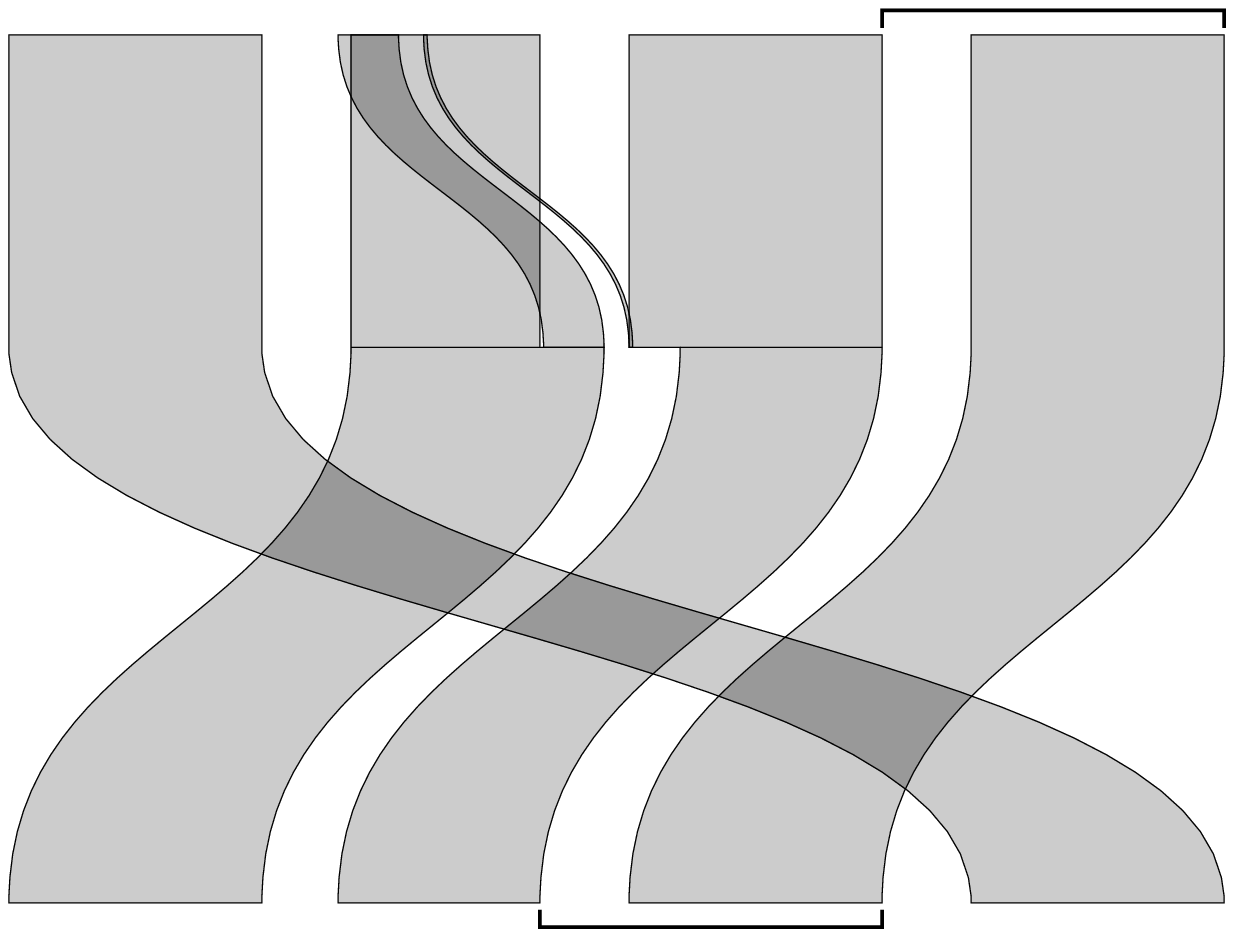}\put(-100,292){$\times m$}\put(-195,15){$\times m$}}
\centerline{\includegraphics[scale=0.5]{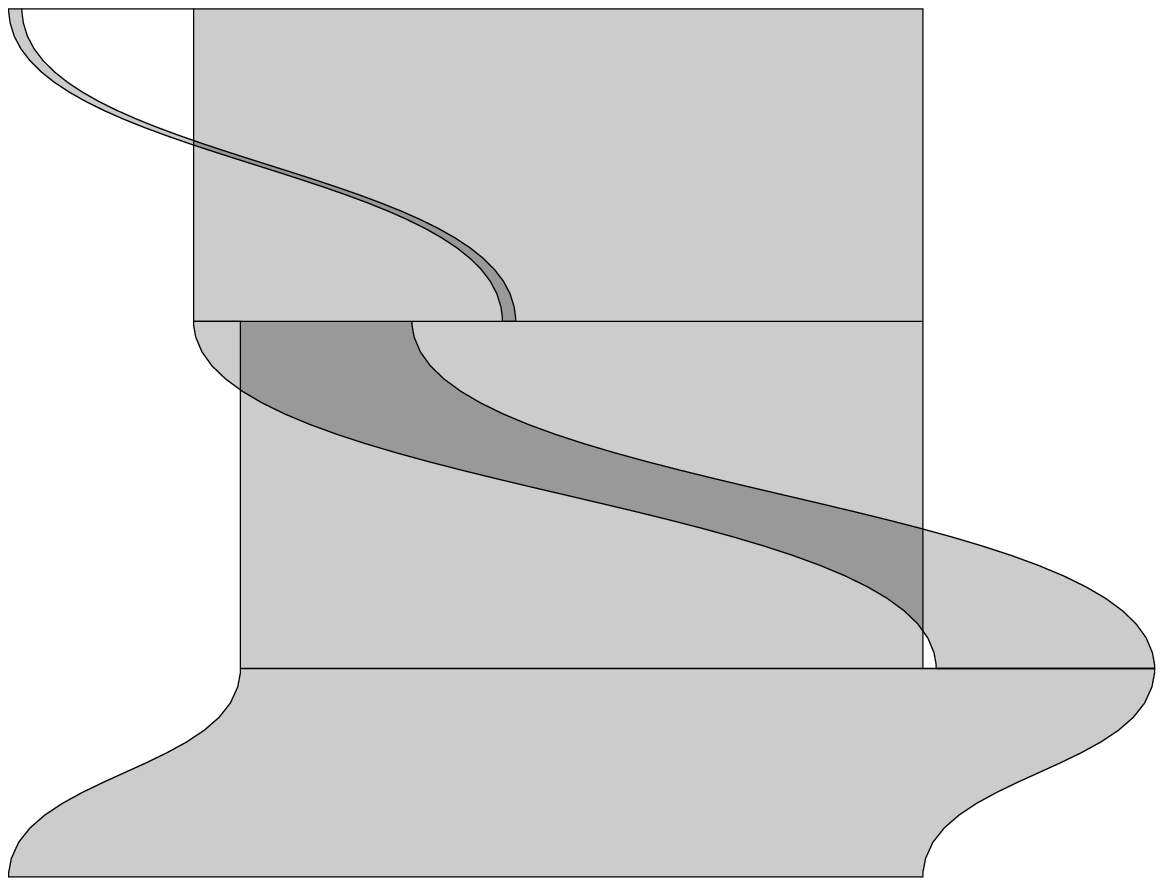}}
\caption{Running the Rips machine on $Z(\protect\overrightarrow w,\protect\overrightarrow\ell)$, first stage}\label{first-stage}
\end{figure}
It is a direct check that the collapses shown in Figs.~\ref{first-stage} and~\ref{second-stage} do the job.
$Z(\overrightarrow{w'},\overrightarrow{\ell'})$ is obtained from
$Z(\overrightarrow w,\overrightarrow\ell)$ in two stages. On the top Fig.~\ref{first-stage},
we show what remains of $Z(\overrightarrow w,\overrightarrow\ell)$ after the first stage,
the bottom picture shows the obtained union of bands in a simplified form.
\begin{figure}[ht]
\centerline{\includegraphics{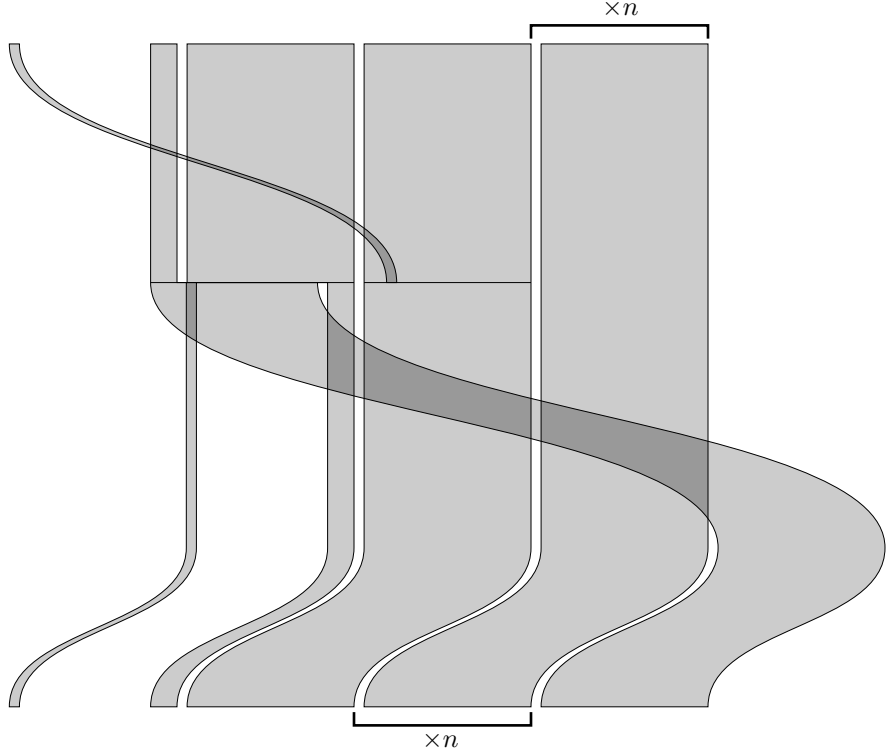}\put(-117,292){$\times n$}\put(-185,15){$\times n$}}
\caption{Running the Rips machine on $Z(\protect\overrightarrow w,\protect\overrightarrow\ell)$, second stage}\label{second-stage}
\end{figure}
Fig.~\ref{second-stage} demonstrates what is removed at the second stage. The obtained
union of bands is isomorphic to $Z(\overrightarrow{w'},\overrightarrow{\ell'})$.

The portions of the pictures marked `${\times}m$' and `${\times}n$' should be repeated $m$ and $n$ times, respectively.
The top and bottom lines should be identified in each picture.\end{proof}

\begin{lemma}\label{w-exist}
Let $m_0,m_1,m_2,\ldots$, $n_0,n_1,n_2,\ldots$ be arbitrary infinite sequences of natural numbers. Then
there exists an infinite sequence $\overrightarrow{w_0},\overrightarrow{w_1},\overrightarrow{w_2},\ldots$
of points from $(\mathbb R_+)^5$ such that
$$\overrightarrow{w_k}=B(m_k,n_k)\overrightarrow{w_{k+1}}.$$
Such a sequence is unique up to scale.
\end{lemma}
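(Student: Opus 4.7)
For existence I would run a standard compactness/diagonal argument. The key structural fact is that each $B(m,n)$ maps $(\mathbb R_+)^5$ into itself: the only zero entries of $B(m,n)$ lie in row~$3$, columns $3,4,5$, and since $B_{3,1}=B_{3,2}=1$ one has $\bigl(B(m,n)\overrightarrow w\bigr)_3=w_1+w_2>0$ for any $\overrightarrow w\in(\mathbb R_+)^5$, while rows $1,2,4,5$ of $B(m,n)$ are already strictly positive. A direct matrix multiplication shows more: any two-step product $B(m,n)B(m',n')$ has all twenty-five entries strictly positive---for rows $1,2,4,5$ this follows since those rows of the first factor are strictly positive and every column of the second factor contains strictly positive entries, while row~$3$ of the product equals the sum of the first two rows of the second factor, which are strictly positive. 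Now fix any $\overrightarrow v\in(\mathbb R_+)^5$; for each $N$, set $\overrightarrow{w^{(N)}_N}=\overrightarrow v$ and recursively $\overrightarrow{w^{(N)}_k}=B(m_k,n_k)\overrightarrow{w^{(N)}_{k+1}}$ for $0\leq k<N$, then normalize by the $\ell^1$-norm to obtain $\overrightarrow{u^{(N)}_k}\in\overline{\Delta^4}$. By compactness and a diagonal extraction, pass to a subsequence $N_j\to\infty$ along which $\overrightarrow{u^{(N_j)}_k}\to\overrightarrow{u_k}$ for every~$k$. The recursion passes to the limit: $\overrightarrow{u_k}=c_k\,B(m_k,n_k)\overrightarrow{u_{k+1}}$ for some $c_k>0$, and the strict positivity of the two-step products forces $\overrightarrow{u_k}$ into the interior of~$\Delta^4$. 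Setting $\overrightarrow{w_k}=\lambda_k\overrightarrow{u_k}$ with $\lambda_0=1$ and $\lambda_{k+1}=c_k\lambda_k$ yields the desired positive solution.

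For uniqueness up to scale, I would invoke Birkhoff's theorem on the Hilbert projective metric $d_H$ on the open simplex. Every non-negative matrix that preserves $(\mathbb R_+)^5$ is non-expansive in $d_H$, and any strictly positive matrix induces a strict contraction; in particular the two-step map $\hat B(m_k,n_k)\hat B(m_{k+1},n_{k+1})$ is a strict Birkhoff contraction for every~$k$. Given two positive solutions $\overrightarrow{w_k},\overrightarrow{w_k'}$, their projectivizations both lie in the nested compact sets
\[
K_N^{(k)}:=\hat B(m_k,n_k)\cdots\hat B(m_{k+N-1},n_{k+N-1})\bigl(\overline{\Delta^4}\bigr),
\]
which for $N\geq 2$ are contained in the open simplex with finite Hilbert diameter. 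Iterating the contraction along the sequence shows that $\bigcap_N K_N^{(k)}$ is a single point for every~$k$, so $[\overrightarrow{w_k}]=[\overrightarrow{w_k'}]$; combined with the linearity of the recursion, the ratio between the two sequences is forced to be the same positive scalar at every level, establishing uniqueness up to a single global scale.

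The one genuinely delicate step will be verifying that the projective diameters $\mathrm{diam}_{d_H}\!\bigl(K_N^{(k)}\bigr)$ actually tend to zero as $N\to\infty$: the Birkhoff contraction factor of $B(m_i,n_i)B(m_{i+1},n_{i+1})$ depends on the integers $m_i,n_i,m_{i+1},n_{i+1}$ and approaches $1$ as these grow, so uniform contraction fails in general. One must therefore combine a bound on the initial diameter $\mathrm{diam}_{d_H}\!\bigl(K_2^{(k)}\bigr)$---obtained from a direct estimate of the entry-ratios of the two-step product, which grow only polynomially in $(m,n)$---with the slow but strict contraction along the sequence to force convergence to a point. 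This is the standard but somewhat technical Perron--Frobenius convergence statement for non-homogeneous products of primitive non-negative matrices, and is the only step that needs real care.
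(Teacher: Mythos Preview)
Your existence argument via compactness and a diagonal extraction is correct. The gap is in the uniqueness part, precisely at the step you yourself flag as ``delicate''. You assert that the Birkhoff contraction factor of the two-step product $B(m_i,n_i)B(m_{i+1},n_{i+1})$ approaches $1$ as the parameters grow, and then defer to an unspecified ``standard but somewhat technical'' convergence statement for non-homogeneous products of primitive matrices. There is no such general statement: if the contraction ratios $\tau_j<1$ satisfy $\sum_j(1-\tau_j)<\infty$, the Hilbert diameters of your nested sets $K_N^{(k)}$ need \emph{not} shrink to zero, and $\bigcap_N K_N^{(k)}$ can be a nondegenerate simplex. Since the lemma is stated for \emph{arbitrary} sequences $(m_k),(n_k)$ of natural numbers, nothing in your argument prevents the parameters from growing fast enough for this obstruction to bite. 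As written, the uniqueness (and, once you rely on it, the well-definedness of the limit) is not established.

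The paper closes exactly this gap with one structural observation you are missing: the factorization
\[
B(m,n)=B'(m,n)\cdot B'',\qquad
B''=\begin{pmatrix}1&1&1&1&1\\0&1&1&0&0\\1&1&0&0&0\\2&4&5&2&2\\1&2&3&2&1\end{pmatrix},
\]
where $B''$ is a \emph{fixed} matrix and $B'(m,n)$ is non-negative with no zero row or column, hence projectively non-expanding for every $m,n\geq 1$. Because the same constant factor $B''$ recurs once in every $B(m_k,n_k)$ regardless of the parameters, the contraction accumulated along any long product $B(m_0,n_0)\cdots B(m_N,n_N)=B'(m_0,n_0)\,B''\,B'(m_1,n_1)\,B''\cdots$ is controlled by $B''$ alone and is therefore \emph{uniform} in $(m_k,n_k)$. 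In particular your worry that uniform contraction fails is unfounded, but seeing this requires the factorization. With it the paper obtains existence and uniqueness in one stroke, writing $\overrightarrow{w_0}$ directly as the limit of $\eta_{B(m_0,n_0)}\circ\cdots\circ\eta_{B(m_k,n_k)}(v_k)$, independent of the seed vectors $v_k$; your separate compactness argument for existence then becomes unnecessary.
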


\begin{proof}
Denote by $S^{r-1}$ the unit sphere in $\mathbb R^r$.
For an $r\times r$-matrix $B$ with non-negative coefficients, define a map $\eta_B$ from the
set $S^{r-1}\cap\mathbb R_+^r$ to itself
by
$$\eta_B(v)=\frac{Bv}{|Bv|}.$$
(We use the standard Euclidean metric in $\mathbb R^r$.)

One can easily verify that $B(m,n)=B'(m,n)\cdot B''$, where 
$$B'(m,n)=\begin{pmatrix}n+1&n&0&1&0\\0&1&0&0&1\\0&0&1&0&0\\n&n&0&1&0\\m(n+1)&m(n+1)-1&m&m&m\end{pmatrix},\quad
B''=\begin{pmatrix}1&1&1&1&1\\0&1&1&0&0\\1&1&0&0&0\\2&4&5&2&2\\1&2&3&2&1\end{pmatrix}.$$
The mapping $\eta_{B'(m,n)}$ is non-expanding for any $m,n\geqslant1$, the map $\eta_{B''}$ is a contraction.
Therefore, $\eta_{B(m,n)}$ is a contraction with contraction coefficient not smaller than that of $\eta_{B''}$. Hence,
we can set
$$\overrightarrow{w_0}=\lim_{k\rightarrow\infty}\bigl((\eta_{B(m_1,n_1)}\circ\eta_{B(m_2,n_2)}\circ\eta_{B(m_3,n_3)}
\circ\ldots\circ\eta_{B(m_k,n_k)})v_k\bigr),$$
where $v_k\in\mathbb R_+^5$, $k=1,2,\ldots$ are arbitrary, the limit always exists and does not depend
on the choice of $v_k$'s. This also implies the uniqueness
of $\overrightarrow{w_0}$ up to a multiple.\end{proof}

Denote by $Z_{\mathbf m,\mathbf n}$ the union of bands $Z(\overrightarrow{w_0},\overrightarrow{\ell_0})$,
where $\overrightarrow{\ell_0}=(1,1,1)$, and $\overrightarrow{w_0}$ is determined by
$\mathbf m=(m_0,m_1,\ldots)$, $\mathbf n=(n_0,n_1,\ldots)$ as in Lemma~\ref{w-exist}. Define recursively
$$\overrightarrow{\ell_{k+1}}=\overrightarrow{\ell_k}\cdot A(m_k,n_k).$$

We now claim that if $\mathbf m$ and $\mathbf n$ grow fast enough, then a typical leave in $Z_{\mathbf m,\mathbf n}$
will have two ends.

\begin{proposition}\label{prop-two-end}
If for all $k\geqslant0$ we have $m_k=n_k\leqslant m_{k+1}/2$, then
the union of leaves in $Z_{\mathbf m,\mathbf n}$ that are not two-ended trees has
zero measure.
\end{proposition}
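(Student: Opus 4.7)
The plan is to apply Lemma~\ref{lem-Rips-step} iteratively. Setting $Z_0=Z_{\mathbf{m},\mathbf{n}}$ and $Z_k=Z(\overrightarrow{w_k},\overrightarrow{\ell_k})$, one obtains a Rips sequence $Z_0\supset Z_1\supset Z_2\supset\ldots$, so that $\mathcal{C}(Z_0)=\bigcap_k Z_k$ and the set of $2$-ended leaves equals the saturation of $\mathcal{C}(Z_0)$ under $\mathcal{F}_{dx}$. The task is to prove that this saturation has full $2$-dimensional Lebesgue measure in $Z_0$.

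The plan mirrors the Hausdorff-dimension computation from the end of Section~\ref{section-example} (the self-similar case), but with the key inequality \emph{reversed}. There, $\mu<\lambda$ gave $\dim_H\mathcal{C}(Y)=1+\log\mu/\log\lambda<2$ and hence zero $2$-dimensional measure, so $1$-ended leaves won. Here I shall show that under $m_k=n_k\le m_{k+1}/2$ the analogous ratio tends to~$1$, so $\mathcal{C}(Z_0)$ has Hausdorff dimension~$2$ and positive $2$-dimensional Lebesgue measure. Minimality of the foliation (a consequence of the primitivity of $A(m,n)$ and $B(m,n)$ together with the thin-type property) will then upgrade positive measure to full measure of the saturation.

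Concretely I would proceed as follows. First, estimate the Perron--Frobenius eigenvalues $\mu_k$ of $A(m_k,n_k)$ and $\lambda_k$ of $B(m_k,n_k)$: inspection of the traces and dominant entries shows both are of order $m_kn_k=m_k^{\,2}$, with the ratio $\mu_k/\lambda_k$ bounded between two positive constants independent of~$k$. Second, use the recursions $\overrightarrow{\ell_{k+1}}=\overrightarrow{\ell_k}\,A(m_k,n_k)$ and $\overrightarrow{w_k}=B(m_k,n_k)\overrightarrow{w_{k+1}}$ to control length growth by $\prod_{j<k}\mu_j$ and width shrinkage by $\prod_{j<k}\lambda_j$. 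The doubling hypothesis $m_{k+1}\ge 2m_k$ gives $\log m_k\ge k\log 2+\log m_0$, so $\sum_{j<k}\log m_j\gtrsim k^2$, which dominates the $O(k)$ correction coming from the constants $\log(\mu_k/m_k^2)$ and $\log(\lambda_k/m_k^2)$. Consequently
$$\frac{\sum_{j<k}\log\mu_j}{\sum_{j<k}\log\lambda_j}=\frac{2\sum_{j<k}\log m_j+O(k)}{2\sum_{j<k}\log m_j+O(k)}\longrightarrow 1,$$
which is the analogue of ``$\mu\ge\lambda$''. Third, adapt the mass-distribution/covering argument of~\cite{bf95,s1} to the non-self-similar setting to conclude that $\mathcal{C}(Z_0)$ has positive $2$-dimensional Lebesgue measure.

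The main obstacle is the third step. In the self-similar setting one has a fixed iterated function system and the ``dimension~$=2$ $\Rightarrow$ positive measure'' implication follows immediately. Here the matrices change at every step, so one must provide uniform lower bounds on the measures of the cylinders appearing at each Rips stage. The explicit form of $A(m,n)$, $B(m,n)$ and the doubling hypothesis $m_{k+1}\ge 2m_k$ are precisely what is needed to make this work: the log-ratios stabilize fast enough that the multiplicative errors accumulated along the Rips sequence remain uniformly bounded, so that the removed transverse measure is summable to a quantity strictly less than $|D_0|$, and positive mass for the core survives.
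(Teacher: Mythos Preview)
Your overall strategy has a real gap at the crucial third step. Hausdorff dimension equal to~$2$ does \emph{not} imply positive $2$-dimensional Lebesgue measure; there are plenty of planar sets of full Hausdorff dimension and zero area. So the eigenvalue comparison you propose, even if the estimates $\mu_k\sim\lambda_k\sim m_k^2$ were made precise, would yield only $\dim_H\mathcal C(Z_0)=2$, which is not enough. You acknowledge this obstacle, but the fix you sketch (``removed transverse measure summable to less than $|D_0|$'') is not quite the right quantity: the core can have zero \emph{transverse} measure and still have positive \emph{area}. What must be controlled is the two-dimensional area of $Z_k$.

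The paper bypasses the dimension computation entirely and does exactly this. Writing the total area as $S_k=\overrightarrow{\ell_k}\,C\,\overrightarrow{w_k}$ for an explicit $3\times5$ matrix $C$, one checks by direct calculation that, under $m_k=n_k\leqslant m_{k+1}/2$, the matrix $m_k\bigl(A_kC-(1-\tfrac{2}{m_k})CB_k\bigr)B_{k+1}$ has all entries positive, which gives the one-line recursion $S_{k+1}>(1-2/m_k)S_k$. Since $m_k\geqslant 2^k m_0$ the product $\prod_k(1-2/m_k)$ converges to a positive number, so $\lim_k S_k>0$ and $\mathcal C(Z_0)=\bigcap_k Z_k$ has positive area. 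This is the ``summable loss'' idea from your last sentence, but carried out with area rather than transverse measure, and with a concrete matrix identity replacing the heuristic eigenvalue comparison.

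One smaller point: to pass from ``the union of $2$-ended leaves has positive measure'' to ``full measure'' you invoke minimality, but minimality alone does not give a $0$--$1$ law for saturated measurable sets. What is needed is (unique) ergodicity, and this is exactly what Lemma~\ref{w-exist} supplies: the contraction argument there shows that the invariant transverse measure is unique up to scale.
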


\begin{proof}
Denote for short:
$$A_k=A(m_k,n_k),\ B_k=B(m_k,n_k),\ Z_k=Z(\overrightarrow{w_k},\overrightarrow{\ell_k}).$$
It follows from Lemma~\ref{lem-Rips-step} that $Z_{k+1}$ can be identified with an enhanced union of bands $Z_k$
obtained from $Z_k$ by a few collapses from a free arc. Thus, we can regard
$Z_0=Z_{\mathbf m,\mathbf n},Z_1,Z_2,\ldots$ as a  Rips sequence.
Denote by $S_k$ the total area of $Z_k$:
$$S_k=\overrightarrow{\ell_k}\cdot C\cdot \overrightarrow{w_k},$$
where
$$C=\begin{pmatrix}2&1&1&1&1\\0&1&1&0&0\\1&1&1&1&1\end{pmatrix}.$$

We claim that under the assumptions of the Proposition we have
\begin{equation}\label{limit}
\lim_{k\rightarrow\infty}S_k>0.
\end{equation}
Indeed, it can be checked directly that the matrix
$$m_k\left(A_kC-\Bigl(1-\frac2{m_k}\Bigr)CB_k\right)B_{k+1}$$ has only positive entries
for all $k\geqslant0$
since they can be expressed as polynomials in $m_k$ and $(m_{k+1}-2m_k)$ with positive coefficients.
Therefore,
$$S_{k+1}-\Bigl(1-\frac2{m_k}\Bigr)S_k=\overrightarrow{\ell_k}\left(A_kC-\Bigl(1-\frac2{m_k}\Bigr)CB_k\right)B_{k+1}\overrightarrow{w_{k+2}}>0,$$
which can be rewritten as
$$S_{k+1}>\Bigl(1-\frac2{m_k}\Bigr)S_k.$$
Since $m_k$ grows exponentially with $k$, we have
$$\sum_{k=0}^\infty\frac2{m_k}<\infty,$$
which implies~\eqref{limit}.

Thus, the union $\mathcal C(Z_{\mathbf m,\mathbf n})$ of cores of all leaves in $Z_{\mathrm m,\mathrm n}$ has
positive area, hence, so does the union of all two-ended leaves. It follows from Lemma~\ref{w-exist} that
the foliation $\mathcal F_{dx}$ on $Z_{\mathbf m,\mathbf n}$ is ``uniquely ergodic'', which means
that any measurable union of leaves is either of zero measure or of full measure. We have excluded
the first option for the union of two-ended leaves, so it is of full measure.\end{proof}

\end{document}